\newtheorem{theorem}{Theorem}
\newtheorem{lemma}[theorem]{Lemma}
\newtheorem{corollary}[theorem]{Corollary}
\newtheorem{proposition}[theorem]{Proposition}
\newtheorem{definition}{Definition}
\newtheorem{notation}[definition]{Notation}
\newtheorem*{remark}{Remark}
\newtheorem*{convention}{Convention}
\newtheorem{assumption}{Assumption}
\newcommand{\indp}{\perp\!\!\!\perp}
\newcommand{\mc}[1]{\mathcal{#1}}
\newcommand{\mb}[1]{\mathbb{#1}}
\newcommand{\E}{\mathbb{E}}
\newcommand{\bb}[1]{\Big( #1 \Big)}
\newcommand{\cd}[1]{\Bigg \{ #1 \Bigg \}}
\newcommand{\dc}[1]{\bigg  [#1 \bigg  ]}
\DeclareMathOperator{\Cov}{Cov}
\newcommand{\tx}[1]{\text{#1}}
\newcommand{\all}{\forall}
\newcommand{\grad}{\nabla}
\newcommand{\der}{\partial}
\newcommand{\dl}{\delta}
\newcommand{\iid}{\overset{i.i.d.}{\sim}}
\newcommand{\tr}{\mathsf{T}}
\newcommand{\rdr}{\mathbb{R}^D \times \mathbb{R}^D}
\newcommand{\vo}{\Delta x^{(1)}}
\newcommand{\vt}{\Delta x^{(2)}}
\newcommand{\too}{\Delta y^{(1)}}
\newcommand{\ttt}{\Delta y^{(2)}}
\newcommand{\xo}{x^{(1)}}
\newcommand{\xt}{x^{(2)}}
\newcommand{\yo}{y^{(1)}}
\newcommand{\yt}{y^{(2)}}
\newcommand{\zz}{z^{*}}
\newcommand{\zo}{z^{(1)}}
\newcommand{\zt}{z^{(2)}}
\newcommand{\hoo}{H^{(1,1)}}
\newcommand{\hot}{H^{(1,2)}}
\newcommand{\dro}{\grad^{(1)}}
\newcommand{\drt}{\grad^{(2)}}
\newcommand{\droo}{\grad^{(1,1)}}
\newcommand{\drot}{\grad^{(1,2)}}
\newcommand{\orcidicon}{\includegraphics[width=0.32cm]{}}
\xdef\csname orcid\x\endcsname{\noexpand\href{https://orcid.org/\csname orcidauthor\x\endcsname}{\noexpand\orcidicon}}
\title{On the Local Structure and Approximation Stability of \\Block Isotropic Gaussian Fields}
\author[1]{Munki Jeong}
\author[2]{Alexander Strang\thanks{Corresponding author: \texttt{alexstrang@berkeley.edu}} }
\affil[1]{Electrical Engineering and Computer Science, Gwangju Institute of Science and Technology}
\affil[2]{Department of Statistics, University of California, Berkeley}
\date{}
\begin{document}
\maketitle
\begin{abstract}
Skew-symmetric functions are a class of functions defined on a product space $M \times M$ that are antisymmetric with respect to the order of their inputs. In \cite{richland2024sharedendpointcorrelationshierarchyrandom}, the authors proved that non-deterministic skew-symmetric Gaussian fields cannot be stationary or isotropic and proposed an alternative notion: stationarity (isotropy) in each component space. Our work focuses on local quadratic approximations of the associated Gaussian fields. Local quadratic approximations to random fields are random polynomials parametrized by a jointly sampled gradient vector and Hessian matrix. We characterize the distribution of the corresponding random vectors and random matrices. Then, we study the error in the quadratic approximation, which is also a Gaussian field. We investigate the error induced by the quadratic approximation in three senses: the pointwise error, the maximal error over an ellipsoidal region, and the worst-case error for multivariate Gaussian inputs at a given confidence level. Next, we explore the limiting behavior of the worst-case error as the distance between an expansion point and evaluation points approaches zero and infinity. Finally, we study how, as the input dimension increases, the variance of multivariate Gaussian distributions must be restricted to keep the worst-case error bound constant.

\vspace{0.05 in}

\noindent 
\textbf{Keywords:} Gaussian Processes, Random Fields, Local Approximation, Gaussian Random Matrices, High-Dimensional Probability, Asymptotic Analysis
\end{abstract}

\newpage
\section{Introduction} 

Skew-symmetric functions are a class of functions defined on the product space $M \times M$, which are antisymmetric with respect to the order of their inputs. These functions generalize real skew-symmetric matrices, which can be viewed as functions defined on the product space of a finite set of natural numbers, to arbitrary index spaces. Skew-symmetric functions are common across the sciences. Notable applications include preference modeling and network flow analysis. 

For instance, in preference modeling, skew-symmetric functions represent the relative preference between two choices $A, B \in M$: the preference for choice $A$ over choice $B$ is the negative of the preference for $B$ over $A$ \cite{chu2005preference, benavoli2024tutoriallearningpreferenceschoices, chau2022learninginconsistentpreferencesgaussian}. Since \cite{chu2005preference} introduced a Bayesian nonparametric approach to preference learning via Gaussian processes, extensive research has been devoted to modeling and learning skew-symmetric preference functions. As another example, network flows are modeled as skew-symmetric functions on pairs of vertices $v_1, v_2 \in M$, where $f(v_1, v_2)$ denotes the flow from $v_1$ to $v_2$, and $f(v_2, v_1) = -f(v_1, v_2)$ denotes the flow in the opposite direction \cite{jiang2011statistical,strang2022network}. Gaussian fields are also employed here for stochastic modeling \cite{richland2024sharedendpointcorrelationshierarchyrandom}. More generally, Gaussian fields are a useful tool for modeling phenomena when the underlying mechanics are not well understood. This approach is also widely adopted in theoretical physics \cite{yamada2018hessian}.


Many applied sciences study problems parameterized by functions, $f$. For example, most dynamical systems start from an ordinary differential equation model that is expressed $\frac{d}{dt} x(t) = f(x(t))$ for some function $f$. Commonly, these problems cannot be solved for a generic $f$. When the solution is continuous in $f$, $f$ is smooth, and inputs may be restricted to a small neighborhood, it is common to replace $f$ with a truncated expansion, $\tilde{f}$. In many applications, $\tilde{f}$ is a second-order Taylor approximation. The user usually proceeds by finding exact solutions to the approximate problem, substituting $\tilde{f}$ for $f$. Quadratic expressions are widely used because they offer a tractable yet expressive means of capturing the rich local behavior of the original functions \cite{boyd2004convex}. We are interested in the accuracy of local quadratic approximations to random models of skew-symmetric functions. For related work, see \cite{cebra2022similaritysuppressescyclicitysimilar,richland2024sharedendpointcorrelationshierarchyrandom}.

We introduce a specific subclass of skew-symmetric Gaussian processes, termed Block Isotropic Gaussian Processes. We characterize the local geometric structure of their derivatives, demonstrate that their quadratic approximations can be viewed as another Gaussian process parameterized by a random gradient and Hessian, and show that these are isotropic Gaussian vectors and Gaussian Orthogonal Ensembles. Finally, we analyze the approximation error between the original Gaussian process and its local quadratic approximation, providing the pointwise error distribution and extending this to a uniform error bound that holds with high probability over an elliptical region and for normally distributed random inputs.

The rest of the paper is organized as follows. In Section \ref{sec: Gaussian Fields}, we present the foundations of Gaussian random fields, including stationarity/isotropy, appropriate notions of derivatives, and several theorems characterizing the covariance of derivatives. This is followed by a discussion of random vectors and random matrices, and of their (un)correlation and independence. In Section \ref{sec: block isotropic}, we study skew-symmetric Gaussian fields and their properties, focusing in particular on the incompatibility of the general notion of stationarity (isotropy) with skew-symmetry in a product space. We then introduce an alternative notion, which we refer to as block stationarity (isotropy), and describe in detail the properties of block isotropic Gaussian fields. In Section \ref{sec: quadratic approximation}, we derive the quadratic Taylor approximation of block isotropic Gaussian fields at inputs of the form $[z,z]$ and characterize the quadratic approximation as a Gaussian field parametrized by Gaussian random vectors and matrices, with prescribed independence structure. Based on this structure, in Section \ref{sec: error}, we give a characterization of the discrepancy between the original Gaussian fields and their quadratic approximation as another Gaussian field. A simple closed-form expression for the covariance function of these Gaussian fields leads to a straightforward characterization of the pointwise error. We then extend the pointwise error result to the maximal error over an ellipsoidal region. We use this to construct a uniform upper bound on the error for random inputs sampled from a multivariate Gaussian distribution. Finally, we describe the limiting behavior of the bound near, and far, from the point of expansion. We conclude the paper by studying how an ellipsoidal region must concentrate about the point of expansion as the underlying dimension increases to maintain the same error characterization.


\section{Preliminaries on Gaussian Random Fields} \label{sec: Gaussian Fields}

\subsection{Gaussian Random Fields}
We present Gaussian fields as described in \cite{abrahamsen1997review} and \cite{adler2007random}.
\begin{definition}[Real-Valued Random Field]
    Let $(\Omega, \mc{F}, \mathbb{P})$ be a complete probability space and $M$ be a topological space (the index set). A real-valued random field is a collection of random variables $\{u(x)\}_{x \in M}$, or equivalently, a measurable mapping $u: \Omega \to \mb{R}^M$, where $\mb{R}^M$ denotes the space of all real-valued functions on $M$.
\end{definition}

In this work, we consider the case where the index set $M$ is a $2D$-dimensional Euclidean space, i.e., $M = \mb{R}^{2D}$. 
The random field is denoted by $u = \{u_x(\omega)\}_{x \in \mb{R}^{2D}}$. For a fixed $x \in \mb{R}^{2D}$, $u_x: \Omega \to \mb{R}$ is a real-valued random variable. We will primarily use the notation $u(x)$ rather than $u_x$ unless confusion arises.

\begin{definition} [Gaussian Random Field]
  A random field $\{u(x)\}_{x \in \mb{R}^{D}}$ is a Gaussian random field if for any finite set of points $\{x_1, \dots, x_n\} \subseteq \mathbb{R}^{D}$, every linear combination of $\big\{u(x_1), \dots, u(x_n) \big\}$, $\sum_{i=1}^n a_iu(x_i) $ follows a univariate Gaussian distribution.
\end{definition}

\begin{definition} [Equality in Distribution]
  Two Gaussian random fields $\{u(x)\}_{x \in \mathbb{R}^{D}}$ and $\{v(x)\}_{x \in \mathbb{R}^{D}}$ are said to be equal in distribution (or equal in law) if their finite-dimensional distributions are identical, i.e., for any finite set of points $\{x_1, \dots, x_n\} \subset \mathbb{R}^{D}$, the multivariate vectors $(u(x_1), \dots, u(x_n))$ and $(v(x_1), \dots, v(x_n))$ follow the same distribution:
  \begin{align*}
      \{u(x)\}_{x \in \mb{R}^{D}} \overset{d}{=} \{v(x)\}_{x \in \mathbb{R}^{D}}.
  \end{align*}
\end{definition}

Arbitrary random fields are not necessarily specified by their first two moments. However, Gaussian fields are entirely determined by their mean and covariance functions.
\begin{definition} [Mean and \text{Cov}ariance Function]
For a given Gaussian field $u$, the mean function $\mu: \mb{R}^{2D} \to \mb{R}$ and the covariance function $k_u: \mb{R}^{2D} \times \mb{R}^{2D} \to \mb{R}$ are defined as:
\begin{align*}
    \mu(x) &= \E[u(x)], \\
    k_u(x,y) &= \text{\text{Cov}}(u(x), u(y)) = \E[(u(x) - \mu(x))(u(y) - \mu(y))], \quad \all\text{ } x,y \in \mb{R}^{2D}.
\end{align*}
\end{definition}

We are predominantly interested in the case where the variance is finite. With this in mind, we consider Gaussian fields as elements of the Hilbert space $L^2(\Omega, M)$, or more precisely, we assume that for every fixed $x$, $u(x) \in L^2(\Omega, \mc{F}, \mb{P})$. This ensures that the covariance function $k_u$ is well-defined and finite everywhere.

Stationarity and isotropy are crucial properties because they imply that the random fields are invariant under translation and orthogonal coordinate transformations, respectively. Introducing symmetries simplifies analysis by reducing the number of free components of the model.

\begin{definition} [Stationary and Isotropic Gaussian Field]
A Gaussian field $u$ is \textbf{strictly stationary} if its finite-dimensional distributions are invariant under translations. 
That is $\{u(x)\}_{x \in \mb{R}^{D}} \overset{d}{=} \{u (x+a)\}_{x \in \mathbb{R}^{D}}$.

A Gaussian field $u$ is \textbf{weakly stationary} if its mean function is constant and its covariance function depends only on the difference vector $x-y$. i.e., there exists $\kappa: \mb{R}^{2D} \to \mb{R}$ such that:
\begin{align*}
    \mu(x) \equiv \mu \quad \text{and} \quad k_u(x,y) = \kappa(x - y).
\end{align*}

Further, a Gaussian field is \textbf{isotropic} if its mean is constant and its covariance depends only on the Euclidean distance $\|x - y\|$. i.e., there exists $\kappa_r: \mb{R}_{\geq 0} \to \mb{R}$ such that:
\begin{align*}
    k_u(x,y) = \kappa_r(\|x - y\|).
\end{align*}
\end{definition}

In general, strict stationarity implies weak stationarity, but the converse is not necessarily true. However, for Gaussian fields, the two notions are equivalent because the distribution is fully characterized by the first two moments. Thus, we use the term "stationary" without qualification. Notably, isotropic Gaussian fields are always stationary.


\begin{notation}[Squared Distance Notation] \label{notation:kernel_function_h}
It is common to express the isotropic covariance function in terms of the squared distance. We posit that there exists a function $h: \mb{R}_{\geq 0} \to \mb{R}$ such that:
\begin{align}
    k_u(x,y) = h\big(\|x - y\|^2\big), \quad \all \text{ } x,y \in \mb{R}^{2D}.
\end{align}
For example, $h(\tau) = exp(-\tau/l^2)$ amounts to the squared exponential covariance function.
In this notation, the smoothness assumption $k_u \in C^{2n}(\mb{R}^{2D} \times \mb{R}^{2D})$ is equivalent to $h \in C^{n}(\mb{R}_{\geq 0})$.
\end{notation}

\begin{proposition} [Isotropic Gaussian Field is Orthogonally Invariant]
  Let $R: \mb{R}^{2D} \to \mb{R}^{2D}$ be an arbitrary orthogonal transformation (e.g., rotation or reflection).
  If $\{u(x)\}_{x \in \mb{R}^{2D}}$ is an isotropic Gaussian field, then it is distributionally invariant under $Q$, i.e., 
  \begin{align*}
      \{u(Qx)\}_{x \in \mb{R}^{2D}} \overset{d}{=} \{u(x)\}_{x \in \mb{R}^{2D}}.
  \end{align*}
Conversely, for a stationary Gaussian field, distributional invariance under all orthogonal transformations implies isotropy.
\end{proposition}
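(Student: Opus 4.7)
The plan is to reduce both directions to checking how the mean function and covariance function transform under $Q$, since for Gaussian fields equality in finite-dimensional distribution is equivalent to equality of mean and covariance (which has been established earlier in the section).

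For the forward direction, I would assume $u$ is isotropic, so $\mu(x)\equiv\mu$ and $k_u(x,y)=\kappa_r(\|x-y\|)$. Define the transformed field $v(x):=u(Qx)$. Its mean is $\E[v(x)]=\mu(Qx)=\mu$, constant. Its covariance satisfies
\begin{align*}
k_v(x,y) \;=\; k_u(Qx, Qy) \;=\; \kappa_r(\|Qx - Qy\|) \;=\; \kappa_r(\|Q(x-y)\|) \;=\; \kappa_r(\|x-y\|),
\end{align*}
where the last equality uses that $Q$ is orthogonal and hence preserves the Euclidean norm. Thus $v$ and $u$ are jointly Gaussian (linear combinations of the $v(x_i)$ are linear combinations of the $u(Qx_i)$, hence univariate Gaussian) with identical mean and covariance, so they have the same finite-dimensional distributions.

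For the converse, assume $u$ is stationary with $\mu(x)\equiv \mu$ and $k_u(x,y)=\kappa(x-y)$, and that $\{u(Qx)\}\overset{d}{=}\{u(x)\}$ for every orthogonal $Q$. Equality in distribution forces equality of covariance, so
\begin{align*}
\kappa\bigl(Q(x-y)\bigr) \;=\; k_u(Qx, Qy) \;=\; k_u(x,y) \;=\; \kappa(x-y)
\end{align*}
for all $x,y\in\mb{R}^{2D}$ and all orthogonal $Q$. Setting $z=x-y$, we get $\kappa(Qz)=\kappa(z)$ for every $z$ and every orthogonal $Q$. Since the orthogonal group acts transitively on spheres $\{z:\|z\|=r\}$, the value of $\kappa(z)$ depends only on $\|z\|$. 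Defining $\kappa_r(r):=\kappa(re_1)$ yields $k_u(x,y)=\kappa_r(\|x-y\|)$, i.e., $u$ is isotropic.

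The proof is essentially mechanical; the only subtle point is the appeal to the fact that Gaussian distributions are characterized by their mean and covariance, together with the transitivity of the orthogonal action on spheres in the converse. No step should present a real obstacle, but I would flag the apparent notational inconsistency in the statement (the operator is introduced as $R$ and then written as $Q$) and use $Q$ consistently throughout.
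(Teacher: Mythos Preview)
Your proposal is correct and follows essentially the same approach as the paper: both directions are reduced to comparing mean and covariance functions, using that $Q$ preserves norms for the forward direction and that $\kappa(Qz)=\kappa(z)$ forces $\kappa$ to be radial for the converse. Your write-up is slightly more explicit (naming the transformed field, invoking transitivity of the orthogonal action on spheres), and your remark about the $R$/$Q$ notational slip is apt.
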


\begin{proof}
  Since Gaussian fields are entirely determined by their mean and covariance functions, it suffices to show that these are invariant.
  Let $u$ be isotropic. Since $\E[u(Qx)] = \mu = \E[u(x)]$, the mean is invariant.
  Furthermore, since $\|Qx - Qy\| = \|Q(x-y)\| = \|x-y\|$, we have:
  $$k_u(Qx, Qy) = \kappa_r(\|Qx - Qy\|) = \kappa_r(\|x - y\|) = k_u(x,y).$$
  Thus, the covariance is invariant.
  
  Conversely, assume $u$ is stationary and orthogonal invariant. Stationarity implies $k_u(x,y) = \kappa(x-y)$. Orthogonal invariance implies $\kappa(Q(x-y)) = \kappa(x-y)$ for any orthogonal $Q$. This is if and only if $\kappa$ is a function of $\| x-y\|$. Hence, $k_u$ depends only on $\|x-y\|$.
\end{proof}

\subsection{Differentiability of Gaussian Random Fields}

\begin{definition}[$L^2$ Continuity]
A Gaussian field $u$ is continuous in $L^2$ at $x \in \mb{R}^{2D}$ if for every sequence $\{x_n\}_{n=1}^{\infty}$ in $\mb{R}^{2D}$ such that $x_n \to x$, we have
\begin{align*}
    \lim_{n \to \infty} \E \bigg[ \big| u(x_n) - u(x) \big|^2 \bigg] = 0.
\end{align*}
\end{definition}

\begin{definition}[Directional Derivative]
Let $u: \mb{R}^{2D}$ be a deterministic function . The directional derivative of $u$ at $x$ in the direction $v \in \mb{R}^{2D}$, denoted by $D_v u(x)$, is defined as the limit of the quotient:
\begin{align*}
    D_v u(x) := \lim_{t \to 0}\frac{u(x + tv) - u(x)}{t}.
\end{align*}
In particular, when $v$ is the $i$-th standard basis vector $e_i$, the directional derivative reduces to the partial derivative $\partial_i u(x)$.
\end{definition}

\begin{definition} [Mean-Square Differentiability]
A Gaussian field $u$ is differentiable in $L^2$ (i.e., mean-square sense) at $x \in \mb{R}^{2D}$,  if there exists a random variable $D_v u(x) \in L^2(\Omega)$ such that
\begin{align*}
    \lim_{t \to 0} \E \bigg[ \bigg| \frac{u(x + tv) - u(x)}{t} - D_v u(x) \bigg|^2 \bigg]= 0 , \quad \text{ in every direction } v \in \mb{R}^{2D}.
\end{align*}
\end{definition}

\begin{proposition}[Interchange of Differentiation and Expectation]
    If the Gaussian field $u$ is differentiable in $L^2$ at $x \in \mathbb{R}^{2D}$, then we can interchange the order of differentiation and expectation, i.e.,
    \[
    \partial_i \mathbb{E}[u(x)] = \mathbb{E}[\partial_i u(x)], \quad 1 \le i \le 2D.
    \]
\end{proposition}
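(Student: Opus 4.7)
The plan is to exploit the fact that $L^2$ convergence implies $L^1$ convergence of the difference quotient, which in turn gives convergence of expectations, and then to identify the two limits. Fix $x \in \mathbb{R}^{2D}$ and a coordinate direction $e_i$, and define the random difference quotient $Q_t := (u(x + t e_i) - u(x))/t$. By the mean-square differentiability hypothesis, $Q_t \to \partial_i u(x)$ in $L^2(\Omega)$ as $t \to 0$, and the limit random variable $\partial_i u(x)$ lies in $L^2(\Omega)$.

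First I would apply linearity of expectation, which is valid because $u(x)$ and $u(x + t e_i)$ both lie in $L^2(\Omega) \subset L^1(\Omega)$ under the standing finite-variance assumption on the field, to write
\[
\mathbb{E}[Q_t] = \frac{\mathbb{E}[u(x + t e_i)] - \mathbb{E}[u(x)]}{t} = \frac{\mu(x + t e_i) - \mu(x)}{t}.
\]
Thus the expectation of the random difference quotient equals the deterministic difference quotient of the mean function $\mu$.

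Next I would pass to the limit on both sides. By Cauchy-Schwarz (equivalently, Jensen's inequality applied to the square function),
\[
\big| \mathbb{E}[Q_t] - \mathbb{E}[\partial_i u(x)] \big| \leq \mathbb{E}\big[|Q_t - \partial_i u(x)|\big] \leq \sqrt{\mathbb{E}[(Q_t - \partial_i u(x))^2]},
\]
and the right-hand side tends to zero by the mean-square differentiability hypothesis. Combining this bound with the identity of the previous paragraph, the deterministic limit $\lim_{t \to 0}(\mu(x + t e_i) - \mu(x))/t$ exists and equals $\mathbb{E}[\partial_i u(x)]$, which is precisely the statement $\partial_i \mu(x) = \partial_i \mathbb{E}[u(x)] = \mathbb{E}[\partial_i u(x)]$. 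There is no serious obstacle here; the only subtlety is ensuring $L^1$-integrability of all terms so that expectation commutes with the linear operations and the limit, and this is immediate from the $L^2$ framework already in place. Gaussianity is not actually used in the argument, only the finite second moments.
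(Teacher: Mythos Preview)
Your proof is correct and follows essentially the same approach as the paper: both use linearity of expectation to rewrite $\mathbb{E}[Q_t]$ as the difference quotient of $\mu$, then use the inequality $|\mathbb{E}[Q_t - \partial_i u(x)]| \le \mathbb{E}|Q_t - \partial_i u(x)|$ together with the fact that $L^2$ convergence implies $L^1$ convergence to pass to the limit. Your remark that Gaussianity is not actually needed is correct and worth noting.
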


\begin{proof}
    Let $e_i$ denote the unit vector in the $i$-th direction. Recall that convergence in $L^2$ implies convergence in $L^1$. Therefore, the definition of mean-square differentiability implies:
    \[
    \lim_{t \to 0} \mathbb{E}\left| \frac{u(x + t e_i) - u(x)}{t} - \partial_i u(x) \right| = 0.
    \]
    By the linearity of expectation and Jensen's inequality, we have:
    \begin{align*}
    \left| \frac{\mathbb{E}[u(x + t e_i)] - \mathbb{E}[u(x)]}{t} - \mathbb{E}[\partial_i u(x)] \right| 
    &= \left| \mathbb{E}\left[ \frac{u(x + t e_i) - u(x)}{t} - \partial_i u(x) \right] \right| \\
    &\le \mathbb{E}\left| \frac{u(x + t e_i) - u(x)}{t} - \partial_i u(x) \right| \to 0.
    \end{align*}
    Thus, $\partial_i \E[u(x)] = \E[\partial_i u(x)]$.
\end{proof}

\begin{proposition} [Gaussianity under Linear Operations and Limits] \label{prop:Gaussianity_ops}
  Let $u$ be a Gaussian field. 
  \begin{enumerate}
      \item Any finite linear combination of values of $u$ is a Gaussian random variable.
      \item The $L^2$-limit of a sequence of Gaussian random variables is a Gaussian random variable.
  \end{enumerate}
\end{proposition}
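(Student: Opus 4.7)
Part 1 is essentially the definition. The plan is simply to note that the statement ``any finite linear combination of values of $u$ is Gaussian'' is the very characterization of a Gaussian field introduced earlier in this section. I would restate this briefly for completeness, emphasizing that the definition was stated for $u$ indexed by $\mathbb{R}^D$ but applies verbatim here with $D$ replaced by $2D$, and that constants can be absorbed by allowing a degenerate (zero-variance) Gaussian.

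For Part 2, the plan is to argue by characteristic functions. Let $X_n \sim \mathcal{N}(\mu_n, \sigma_n^2)$ and suppose $X_n \to X$ in $L^2(\Omega, \mathcal{F}, \mathbb{P})$. First I would show that the parameters converge: Cauchy--Schwarz gives $|\mu_n - \mathbb{E}[X]| = |\mathbb{E}[X_n - X]| \le \|X_n - X\|_{L^2} \to 0$, so $\mu_n \to \mu := \mathbb{E}[X]$, and the reverse triangle inequality in $L^2$ gives $\|X_n\|_{L^2} \to \|X\|_{L^2}$, hence $\sigma_n^2 \to \sigma^2 := \mathrm{Var}(X)$. Next I would write the characteristic function $\phi_{X_n}(t) = \exp\!\bigl(i\mu_n t - \tfrac{1}{2}\sigma_n^2 t^2\bigr)$ and take the pointwise limit, obtaining $\phi(t) = \exp\!\bigl(i\mu t - \tfrac{1}{2}\sigma^2 t^2\bigr)$. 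Finally, since $L^2$ convergence implies convergence in probability and hence in distribution, $\phi$ must coincide with the characteristic function of $X$; by the uniqueness theorem for characteristic functions, $X \sim \mathcal{N}(\mu, \sigma^2)$.

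The only subtlety worth flagging is the degenerate case $\sigma^2 = 0$, which I would handle by treating a point mass as a Gaussian with zero variance (consistent with the convention used throughout the paper when discussing linear combinations). No further obstacle is expected: the argument is a routine application of $L^2$ control on the first two moments combined with Lévy's continuity theorem, and no deeper machinery is needed.
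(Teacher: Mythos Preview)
Your proposal is correct and follows essentially the same approach as the paper: Part~1 is handled by appeal to the definition, and Part~2 proceeds by first establishing convergence of $\mu_n$ and $\sigma_n^2$ via Cauchy--Schwarz and the reverse triangle inequality, then identifying the limit distribution. The only difference is that the paper identifies the limit through the CDFs $\Phi((x-\mu_n)/\sigma_n)$ rather than characteristic functions; your route via Lévy's continuity theorem is equally valid and has the minor advantage of handling the degenerate case $\sigma^2 = 0$ without the division-by-$\sigma_n$ awkwardness that the CDF argument would need to patch.
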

\begin{proof}
  Part 1 follows directly from the definition of a Gaussian field (multivariate normality).
  
  For Part 2, let $\{X_n\}$ be a sequence of Gaussian random variables such that $X_n \sim \mathcal{N}(\mu_n, \sigma_n^2)$ and $X_n \to X$ in $L^2$. 
  
  First, recall that convergence in $L^2$ implies convergence of the first two moments.
  By the Cauchy-Schwarz inequality, $|\E[X_n] - \E[X]| = |\E[X_n - X]| \le \|X_n - X\|_{L^2} \to 0$, so $\mu_n \to \mu := \E[X]$.
  Similarly, by the reverse triangle inequality for norms, $\|X_n\|_{L^2} \to \|X\|_{L^2}$, which implies $\E[X_n^2] \to \E[X^2]$. Consequently, the variances converge: $\sigma_n^2 \to \sigma^2 := \text{Var}(X)$.
  
  Second, we recall that $X_n \to X$ in $L^2$ implies $X_n$ converges to $X$ in distribution (denoted $X_n \xrightarrow{d} X$).
  Consider the Cumulative Distribution Functions (CDF). Since each $X_n$ is Gaussian, its CDF is given by $F_n(x) = \Phi(\frac{x - \mu_n}{\sigma_n})$, where $\Phi$ is the standard normal CDF.
  Since $\mu_n \to \mu$ and $\sigma_n \to \sigma$, and $\Phi$ is a continuous function, we have pointwise convergence for all points where the limit CDF is continuous:
  \begin{align*}
      \lim_{n \to \infty} F_n(x) = \Phi\left(\frac{x - \mu}{\sigma}\right). 
  \end{align*} 
  Note that the right-hand side is the CDF of a Gaussian variable $\mathcal{N}(\mu, \sigma^2)$.
  Hence the random variable $X$ must follow the distribution $\mathcal{N}(\mu, \sigma^2)$. 
\end{proof}

\begin{proposition}[Derivative of Gaussian Field] \label{prop:Gaussanity_under_differential_operator}
  Let $u$ be a mean-square differentiable Gaussian field on $\mb{R}^{2D}$. Then the derivative $\partial_i u(x)$ is a Gaussian random variable.
\end{proposition}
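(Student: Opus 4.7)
The plan is to reduce this statement directly to the two-part Proposition \ref{prop:Gaussianity_ops}, which is tailor-made for exactly this purpose. The key observation is that for each fixed $t \neq 0$, the difference quotient
\[
Q_t := \frac{u(x + t e_i) - u(x)}{t}
\]
is a finite linear combination (with coefficients $1/t$ and $-1/t$) of values of the Gaussian field $u$, evaluated at the two points $x + t e_i$ and $x$. Hence, by Part 1 of Proposition \ref{prop:Gaussianity_ops}, $Q_t$ is a Gaussian random variable for every $t \neq 0$.

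Next, I would invoke the hypothesis that $u$ is mean-square differentiable at $x$. By the definition of mean-square differentiability, choosing any sequence $t_n \to 0$ yields
\[
\lim_{n \to \infty} \mathbb{E}\left[ \big| Q_{t_n} - \partial_i u(x) \big|^2 \right] = 0,
\]
so $Q_{t_n} \to \partial_i u(x)$ in $L^2(\Omega)$. Since each $Q_{t_n}$ is Gaussian by the previous step, Part 2 of Proposition \ref{prop:Gaussianity_ops} applies: the $L^2$-limit of a sequence of Gaussian random variables is again Gaussian. Therefore $\partial_i u(x)$ is a Gaussian random variable, with mean $\lim_n \mathbb{E}[Q_{t_n}]$ and variance $\lim_n \mathrm{Var}(Q_{t_n})$ inherited from the convergence of the first two moments established in the proof of that proposition.

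There is essentially no main obstacle here, because the heavy lifting has already been carried out in Proposition \ref{prop:Gaussianity_ops}. The only subtlety worth mentioning is that one must make sure the approximating sequence consists of genuinely Gaussian random variables (not merely asymptotically Gaussian), which is immediate once we observe that each $Q_t$ is a fixed finite linear combination of field values and hence falls under Part 1. The argument extends without change to directional derivatives $D_v u(x)$ for any $v \in \mathbb{R}^{2D}$, since $u(x+tv) - u(x)$ is again a linear combination of two field values.
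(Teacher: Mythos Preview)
Your proposal is correct and follows essentially the same approach as the paper: observe that each difference quotient is a finite linear combination of field values and hence Gaussian by Part~1 of Proposition~\ref{prop:Gaussianity_ops}, then pass to the $L^2$-limit using Part~2. The paper's proof is a one-sentence compression of exactly this argument.
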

\begin{proof}
    The derivative is defined as the $L^2$-limit of the quotient $\frac{1}{t}(u(x+te_i) - u(x))$. For any fixed $t$, this quotient is a linear combination of Gaussian variables, hence Gaussian. By Proposition \ref{prop:Gaussianity_ops}, the limit is also Gaussian.
\end{proof}

The following three results are presented in \cite{christakos1992random}:

\begin{proposition}[Sufficient Condition for Mean Square Differentiability]
  Let $u$ be a Gaussian field on $\mb{R}^{2D}$ with covariance function $k_u$ and a differentiable mean function. If the mixed partial derivatives $\frac{\der^2 k_u(x,y)}{\der x_i \der y_j}$ exist and are finite at the diagonal $x=y$ for all $1 \leq i,j \leq 2T$, then $u$ is mean-square differentiable at $x$.
  Furthermore, the covariance function of the derivatives is given by the second mixed partial derivatives of the kernel:
  \begin{align*}
   \Cov (\partial_i u(x), \partial_j u(y)) = \frac{\der^2 k_u(x,y)}{\der x_i \der y_j}.     
  \end{align*}
\end{proposition}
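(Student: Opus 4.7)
The plan is to establish mean-square differentiability via a Cauchy argument in $L^2(\Omega)$ and then to recover the covariance formula by continuity of the $L^2$ inner product. Define the difference quotient
$$q_t^{(i)}(x) \;:=\; \tfrac{1}{t}\big(u(x+te_i) - u(x)\big);$$
the goal is to show that the family $\{q_t^{(i)}(x)\}_{t>0}$ is Cauchy in $L^2(\Omega)$ as $t \to 0$, and then identify its limit with $\der_i u(x)$.

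First, I would expand $\E\big[(q_t^{(i)}(x) - q_s^{(i)}(x))^2\big]$ using the bilinearity of variance and the decomposition $\E[Z^2] = \mathrm{Var}(Z) + (\E Z)^2$. Because the mean function $\mu$ is assumed differentiable, the squared-mean piece tends to zero. The variance piece decomposes into three terms, each of which is a second difference of $k_u$ divided by a product of increments. Explicitly, the cross term is
$$-\frac{2}{st}\Big[k_u(x+te_i,x+se_i) - k_u(x+te_i,x) - k_u(x,x+se_i) + k_u(x,x)\Big],$$
which is precisely a second-order symmetric difference of $k_u$ in the coordinates $x_i, y_i$ at the diagonal point $(x,x)$. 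Under the hypothesis that $\tfrac{\der^2 k_u(x,y)}{\der x_i \der y_i}$ exists and is finite at the diagonal, this quotient tends to $\tfrac{\der^2 k_u}{\der x_i \der y_i}(x,x)$, and an analogous Taylor argument shows that the two pure variance pieces $\tfrac{1}{t^2}\mathrm{Var}(u(x+te_i)-u(x))$ and $\tfrac{1}{s^2}\mathrm{Var}(u(x+se_i)-u(x))$ converge to the same limit. Combining with the correct signs yields $\E[(q_t^{(i)} - q_s^{(i)})^2] \to 0$ as $s, t \to 0$, so the family is Cauchy.

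By completeness of $L^2(\Omega, \mc{F}, \mb{P})$, the Cauchy family converges to a limit, which I denote $\der_i u(x)$, establishing mean-square differentiability in every axis direction; the general directional case then follows from the linearity of the difference quotient. \Cref{prop:Gaussianity_ops} additionally guarantees the limit is Gaussian. For the covariance formula, I would invoke continuity of the $L^2$ inner product: if $X_n \to X$ and $Y_n \to Y$ in $L^2$, then $\E[X_n Y_n] \to \E[XY]$. Applied to $q_s^{(i)}(x)$ and $q_t^{(j)}(y)$, with the mean contribution handled separately, this yields
$$\Cov\big(\der_i u(x), \der_j u(y)\big) = \lim_{s,t \to 0} \frac{k_u(x+se_i, y+te_j) - k_u(x+se_i, y) - k_u(x, y+te_j) + k_u(x,y)}{st},$$
which by definition equals $\tfrac{\der^2 k_u(x,y)}{\der x_i \der y_j}$ wherever this mixed partial exists.

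The main technical subtlety is that pointwise existence of $\tfrac{\der^2 k_u}{\der x_i \der y_i}$ at the diagonal does not by itself guarantee that the four-corner symmetric second difference converges to that same value; one typically needs a mild additional regularity, for instance $k_u \in C^2$ in a neighborhood of the diagonal (equivalently $h \in C^2$ in the notation of \Cref{notation:kernel_function_h}). I would assume this in line with \cite{christakos1992random}; given it, the convergence of each second difference to the corresponding mixed partial is a standard consequence of applying the mean value theorem twice, once in each coordinate slot.
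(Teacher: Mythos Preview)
The paper does not prove this proposition; it is quoted (together with the next corollary and proposition) from \cite{christakos1992random} without argument. So there is no ``paper's own proof'' to compare against.

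That said, your outline is the standard argument and is essentially correct: expand the $L^2$ distance between two difference quotients into four-corner second differences of $k_u$, use existence of the mixed partial at the diagonal to show these converge to the same value so the family is Cauchy, invoke completeness of $L^2(\Omega)$, and recover the covariance formula by $L^2$-continuity of the inner product. You also correctly flag the one genuine subtlety---that mere pointwise existence of $\tfrac{\partial^2 k_u}{\partial x_i \partial y_i}$ at $(x,x)$ does not automatically force the symmetric second difference (with two independent increments $s,t$) to converge to it; some local $C^2$-type regularity, or at minimum a generalized second-derivative condition, is needed to close that step rigorously. This is exactly the caveat under which the cited reference operates, so your decision to assume it is appropriate here.
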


\begin{corollary}[Stationary Case]
  Let $u$ be a stationary Gaussian field with covariance $k_u(x,y) = \kappa(x-y)$. If $\kappa$ is twice differentiable at the origin $0$, then $u$ is mean-square differentiable everywhere. The covariance of the derivatives is given by:
  \begin{align*}
  \Cov(\partial_i u(x), \partial_j u(y)) = -\frac{\der^2 \kappa(x-y)}{\der x_i \der y_j}. 
  \end{align*}
\end{corollary}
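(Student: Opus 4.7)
The plan is to derive this corollary by specializing the preceding proposition to the stationary case. Under stationarity, $k_u(x,y) = \kappa(x-y)$, so the mixed partial derivatives of $k_u$ at the diagonal reduce to second partials of $\kappa$ at the origin. Explicitly, by the chain rule,
\begin{align*}
    \frac{\partial k_u(x,y)}{\partial x_i} = \kappa_i(x-y), \qquad \frac{\partial^2 k_u(x,y)}{\partial x_i \partial y_j} = -\kappa_{ij}(x-y),
\end{align*}
where $\kappa_i$ and $\kappa_{ij}$ denote the first and second partials of $\kappa$ viewed as a single-vector function on $\mb{R}^{2D}$, and the minus sign arises from $\partial(x_j - y_j)/\partial y_j = -1$. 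Evaluating on the diagonal $x = y$ yields $-\kappa_{ij}(0)$, which exists and is finite by the twice-differentiability hypothesis on $\kappa$ at $0$.

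Since this verification holds at every diagonal point $(x,x)$, the hypothesis of the preceding proposition is satisfied at every $x \in \mb{R}^{2D}$, giving mean-square differentiability of $u$ throughout $\mb{R}^{2D}$. The covariance formula then follows by substituting the chain-rule expression into the proposition's conclusion:
\begin{align*}
    \Cov(\partial_i u(x), \partial_j u(y)) = \frac{\partial^2 k_u(x,y)}{\partial x_i \partial y_j} = -\kappa_{ij}(x-y),
\end{align*}
which matches the stated form $-\frac{\partial^2 \kappa(x-y)}{\partial x_i \partial y_j}$ once the right-hand side is read as the $(i,j)$-th second partial of the single-variable function $\kappa$ evaluated at $x-y$.

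The main subtlety is that the right-hand side of the covariance formula is evaluated at an arbitrary argument $x - y$, so one implicitly needs $\kappa$ to be twice differentiable everywhere, not only at the origin. For stationary covariance kernels this extension is automatic: applying Cauchy--Schwarz to increments of $u$ yields
\begin{align*}
    |\kappa(z + h) - \kappa(z)|^2 \leq 2\kappa(0)\bigl(\kappa(0) - \kappa(h)\bigr),
\end{align*}
so regularity at the origin propagates globally. Applied iteratively to difference quotients (which are themselves stationary Gaussian fields once the first derivatives are taken), this upgrades twice-differentiability of $\kappa$ at $0$ to twice-differentiability on all of $\mb{R}^{2D}$. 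Flagging this propagation step is the main technical care required; once in place, the rest is a routine chain-rule computation.
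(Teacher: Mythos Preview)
Your proposal is correct. The paper does not supply its own proof of this corollary; it simply cites the result (along with the preceding and following propositions) from \texttt{christakos1992random}, so your chain-rule specialization of the preceding proposition is already more explicit than what the paper offers, and your flagging of the global-differentiability subtlety (and its resolution via the Cauchy--Schwarz bound on increments of $\kappa$) is a genuine contribution beyond the paper's treatment.
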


\begin{proposition}[\text{Cov}ariance of Partial Derivatives] \label{prop:Covariance_of_partial_derivatives}
  If $L^2$ derivatives $\partial^\alpha_x u(x)$ and $\partial^\beta_y u(y)$ exist for multi-indices $\alpha, \beta$ at $x,y \in \mb{R}^{2D}$, then their covariance is given by differentiating the kernel:
  \begin{align*}
      \Cov \big(\partial^\alpha_x u(x), \partial^\beta_y u(y)\big) = \partial^\alpha_x \partial^\beta_y k_u(x,y).
  \end{align*}
\end{proposition}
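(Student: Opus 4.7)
The plan is to prove the identity by induction on the total order $|\alpha| + |\beta|$, using the fact that $L^2(\Omega)$ is a Hilbert space with inner product $\langle X, Y \rangle = \E[XY]$ and that the covariance $\Cov(X,Y) = \E[(X-\E X)(Y-\E Y)]$ is (jointly) continuous with respect to $L^2$-convergence. The latter follows from Cauchy--Schwarz: if $X_n \to X$ and $Y_n \to Y$ in $L^2$, then $\E X_n \to \E X$, $\E Y_n \to \E Y$, and $\E[X_n Y_n] \to \E[XY]$, so $\Cov(X_n, Y_n) \to \Cov(X,Y)$.

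The base case $|\alpha| + |\beta| = 0$ is just the definition of the covariance kernel $k_u(x,y) = \Cov(u(x), u(y))$. For the inductive step, it suffices to treat the case of introducing one additional partial derivative, say raising $\alpha$ to $\alpha + e_i$. By hypothesis, the $L^2$-derivative $\partial^{\alpha + e_i}_x u(x)$ exists and is the $L^2$-limit of the difference quotient $t^{-1}\big(\partial^\alpha_x u(x + t e_i) - \partial^\alpha_x u(x)\big)$ as $t \to 0$. By continuity of covariance under $L^2$-limits, bilinearity gives
\begin{align*}
\Cov\big(\partial^{\alpha + e_i}_x u(x), \partial^\beta_y u(y)\big)
&= \lim_{t \to 0} \frac{1}{t}\db{\Cov\big(\partial^\alpha_x u(x + t e_i), \partial^\beta_y u(y)\big) - \Cov\big(\partial^\alpha_x u(x), \partial^\beta_y u(y)\big)}.
\end{align*}
Applying the inductive hypothesis to both terms on the right identifies the bracketed expression as $\partial^\alpha_{x'} \partial^\beta_y k_u(x',y)\big|_{x' = x + t e_i} - \partial^\alpha_x \partial^\beta_y k_u(x,y)$, whose limit as $t \to 0$ is by definition $\partial_{x_i} \partial^\alpha_x \partial^\beta_y k_u(x,y) = \partial^{\alpha + e_i}_x \partial^\beta_y k_u(x,y)$. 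The symmetric argument increments components of $\beta$.

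The main technical obstacle is ensuring that this limit exists classically (not merely as an $L^2$-derivative of the covariance kernel in some weak sense). The existence of the $L^2$-derivatives $\partial^\alpha_x u$ and $\partial^\beta_y u$, combined with the continuity of the covariance under $L^2$-limits, is exactly what guarantees the pointwise existence of the mixed partial derivative of $k_u$ at $(x,y)$ as the limit of the corresponding difference quotient applied to $k_u$ itself. One should also note that the order of iterated differentiation in $x$ and in $y$ is irrelevant since the $x$-derivatives and $y$-derivatives commute (they act on disjoint blocks of arguments of $k_u$), so the induction is well-defined regardless of the order in which one raises components of $\alpha$ and $\beta$. Beyond this bookkeeping, the argument is a direct application of Hilbert-space continuity of the inner product and the mean-square definition of the derivative established in the preceding propositions.
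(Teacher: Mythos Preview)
The paper does not actually prove this proposition: it is one of three results quoted from \cite{christakos1992random} without argument. Your induction on $|\alpha|+|\beta|$, passing the $L^2$-limit through the covariance via Cauchy--Schwarz, is the standard proof and is correct. The only point worth making explicit is that invoking the inductive hypothesis at the shifted point $x+te_i$ requires $\partial^\alpha_{x'}u(x')$ to exist for $x'$ near $x$; this is already implicit in the very definition of the higher-order $L^2$-derivative $\partial^{\alpha+e_i}_x u(x)$, so no additional assumption is needed, but a sentence flagging it would tighten the exposition.
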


\subsection{Preliminaries on Random Vectors and Random Matrices}
\begin{remark}
  Throughout this work, we consider the gradient and Hessian at $z \in \mb{R}^{D}$.
  While $u(z)$ is a scalar random variable, the gradient $\grad u(z)$ is a random vector in $\mb{R}^{D}$, and the Hessian $\grad^2 u(z)$ is a random matrix in $\mb{R}^{2D \times 2D}$. Rigorously, these are random variables taking values in  Hilbert spaces $L^2(\Omega; \mb{R}^{2D})$ and $L^2(\Omega; \mb{R}^{2D \times 2D})$ respectively.
  In particular, the inner product is defined as:
  \begin{align*}
      \langle X, Y \rangle_{L^2(\Omega; \mb{R}^{D})} &= \E[X^\tr Y], \quad X, Y \in L^2(\Omega; \mb{R}^{D}), \\
      \langle A, B \rangle_{L^2(\Omega; \mb{R}^{D \times D})} &= \E[\text{trace}(A^\tr B)], \quad A, B \in L^2(\Omega; \mb{R}^{D \times D}).
  \end{align*}
  Notably, the elements of Hilbert space $L^2(\Omega; \mb{R}^{D})$, $L^2(\Omega; \mb{R}^{D \times D})$ are Banach space-valued random variables where the norm is defined through the inner product and the metric is induced by the norm.
  With this understanding, real-valued, vector-valued, and matrix-valued random variables can be treated in a unified framework.
\end{remark}
Below are some important examples of random vectors and random matrices \cite{vershynin2009high, EdelmanRMT},
\begin{definition}[Gaussian random vectors, Isotropic Gaussian random vectors]
  A random vector $X = (X_1, \cdots, X_n)$ is multivariate Gaussian if for any constant vector $a \in \mb{R}^n$, the scalar random variable $a^{\tr} X = \sum_{i=1}^n a_i X_i$ follows a Gaussian distribution.
  When each component is uncorrelated and has the same variance, i.e., $\E \big[ (X-\E X)(X - \E X)^{\tr}\big] = \sigma^2I_n$, it is called an isotropic Gaussian random vector.
\end{definition}

\begin{proposition} \label{prop:orthogonal_invariance_of_gaussian_vector}
  A random vector $X \in \mb{R}^n$ is an isotropic Gaussian random vector if and only if for any orthogonal matrix $Q \in \mb{R}^{n \times n}$,
  \begin{align*}
      QX \overset{d}{=} X.
  \end{align*}
\end{proposition}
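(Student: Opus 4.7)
The plan is to exploit the fact that a Gaussian vector is completely determined by its mean and covariance, so that the distributional identity $QX \overset{d}{=} X$ reduces to matching the first two moments; the equivalence then boils down to analyzing which mean vectors and covariance matrices are invariant under conjugation by the full orthogonal group $O(n)$.

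For the forward direction, I would assume $X$ is an isotropic Gaussian with $\Cov(X) = \sigma^2 I_n$. Since affine images of Gaussians are Gaussian, $QX$ is Gaussian with mean $Q\E[X]$ and covariance $Q(\sigma^2 I_n)Q^\tr = \sigma^2 QQ^\tr = \sigma^2 I_n$. The covariance matches $X$ automatically, so it only remains to match means, which requires $Q\E[X] = \E[X]$ for every orthogonal $Q$; taking $Q = -I$ forces $\E[X] = 0$. Thus, under the standard convention that isotropic Gaussians are centered (or, equivalently, after absorbing the forced $\mu = 0$ into the definition), one obtains $QX \overset{d}{=} X$ immediately.

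For the backward direction, suppose $QX \overset{d}{=} X$ for all orthogonal $Q$, where $X$ is a multivariate Gaussian with mean $\mu$ and covariance $\Sigma$. Equating the means and covariances of $X$ and $QX$ gives $Q\mu = \mu$ and $Q \Sigma Q^\tr = \Sigma$ for every orthogonal $Q$. The mean condition, applied to $Q = -I$, yields $\mu = 0$. The main obstacle is the covariance condition: I must deduce $\Sigma = \sigma^2 I_n$ from the fact that $\Sigma$ is $O(n)$-invariant under conjugation.

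I would handle this with two elementary choices of $Q$. First, let $Q$ be the permutation matrix swapping coordinates $i$ and $j$; then $Q\Sigma Q^\tr = \Sigma$ forces $\Sigma_{ii} = \Sigma_{jj}$ for every $i, j$, so all diagonal entries coincide and equal some $\sigma^2 \geq 0$. Second, let $Q$ be the $90^\circ$ rotation in the $(e_i, e_j)$-plane fixing the remaining coordinates; comparing the $(i,j)$ entries of $Q\Sigma Q^\tr$ and $\Sigma$ yields $\Sigma_{ij} = -\Sigma_{ij}$, hence $\Sigma_{ij} = 0$ for $i \neq j$. Combining, $\Sigma = \sigma^2 I_n$, which completes the proof. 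A more conceptual route would invoke Schur's lemma, using the irreducibility of the defining $O(n)$-representation on $\mb{R}^n$, but the elementary symmetry argument above is the most self-contained and avoids importing additional machinery.
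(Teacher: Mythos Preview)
Your forward-direction argument is essentially the paper's: the appendix computes $\Cov(Qv,Qv)=Q(\sigma^2 I)Q^\tr=\sigma^2 I$ and concludes by the fact that Gaussians are determined by their first two moments. You go a bit further by observing that the mean identity $Q\mu=\mu$ forces $\mu=0$ (via $Q=-I$), which the paper sidesteps by simply assuming $v$ is mean-zero in the appendix lemma.

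The notable difference is the backward direction: the paper's appendix proves only the implication ``isotropic Gaussian $\Rightarrow$ orthogonally invariant'' and does not supply any argument for the converse. Your elementary argument---coordinate-swap permutations to equalize the diagonal of $\Sigma$, planar $90^\circ$ rotations to kill the off-diagonal entries---is correct and self-contained, and actually fills a gap left by the paper's own proof.

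One caveat worth making explicit (you already do, implicitly): the converse as literally stated in the proposition is false without a standing Gaussianity hypothesis on $X$, since any spherically symmetric law (e.g., uniform on a sphere) satisfies $QX\overset{d}{=}X$. Your reading---``among Gaussian vectors, isotropy is equivalent to orthogonal invariance''---is the only one under which the statement holds, and is the intended one in context.
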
 The proof can be found in appendix \ref{appendix:orthogonal_invariance}.
\bigskip

For random matrices, we focus on the Gaussian Orthogonal Ensemble (GOE), which plays a central role in random matrix theory.
Similarly, we define the class "Gaussian Skew-symmetric Orthogonal Ensemble", which is not standard but useful in our analysis of the Hessian.
It should not be confused with the Gaussian Symplectic Ensemble (GSE), which is another important class of random matrices.

We adopt the element-wise sampling perspective definition of these ensembles, as introduced in \cite{EdelmanRMT}.
\begin{definition}[Gaussian Orthogonal Ensemble (GOE)] 
  Real matrix $M \in \mb{R}^{D \times D}$ is said to follow the Gaussian Orthogonal Ensemble, denoted by $M \sim \text{GOE}(D)$, if M is symmetric. Its entries $\{M_{ij}\}_{1 \leq i \leq j \leq D}$ are independent Gaussian random variables distributed as:
  \begin{align*}
      M_{ii} &\iid \mathcal{N}(0, 2), \quad 1 \leq i \leq D, \\
      M_{ij} &\iid \mathcal{N}(0, 1), \quad 1 \leq i < j \leq D.
  \end{align*}
\end{definition}

\begin{definition}[Gaussian Skew-Symmetric Orthogonal Ensemble (GSOE)]
  Real matrix $M \in \mb{R}^{D \times D}$ is said to follow the Gaussian Skew-Symmetric Ensemble, denoted by $M \sim \text{GSOE}(D)$, if M is skew-symmetric. Its entries $\{M_{ij}\}_{1 \leq i < j \leq D}$ are independent Gaussian random variables distributed as:
  \begin{align*}
      M_{ji} &= -M_{ij} \text{ with probabiilty one}, \quad 1 \leq i < j \leq D, \\
      M_{ij} &\iid \mathcal{N}(0, 1), \quad 1 \leq i < j \leq D, \\
      M_{ii} &= 0 \text{ with probabiilty one}, \quad 1 \leq i \leq D.
  \end{align*}
\end{definition}

\begin{theorem}
  Matrix $M \in \mb{R}^{D \times D}$ follows $\text{GOE}( D)$ if and only if there exists a matrix $A \in \mb{R}^{D \times D}$ with entries $A_{ij} \iid \mathcal{N}(0,1)$ such that
  \begin{align*}
    M \overset{d}{=} \frac{A + A^\tr}{\sqrt{2}}.
    \end{align*}
    Consequently, M is orthogonally invariant, i.e., for any orthogonal matrix $Q \in \mb{R}^{D \times D}$,
    \begin{align*}
        Q M Q^\tr \overset{d}{=} M.
    \end{align*}
\end{theorem}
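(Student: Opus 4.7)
The plan is to prove the characterization (iff) first, then derive orthogonal invariance as a direct corollary. The real content lies in the backward direction of the iff; the forward direction is essentially automatic, since given any $M \sim \mathrm{GOE}(D)$ one may independently sample a matrix $A$ with entries $A_{ij} \iid \mathcal{N}(0,1)$, and the backward direction immediately yields $(A + A^\tr)/\sqrt{2} \sim \mathrm{GOE}(D) \overset{d}{=} M$.

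For the backward direction, I would set $B := (A + A^\tr)/\sqrt{2}$ and verify the GOE requirements entrywise. First, $B$ is symmetric by construction. Second, the diagonal entries satisfy $B_{ii} = \sqrt{2}\, A_{ii} \sim \mathcal{N}(0,2)$, while for $i < j$ the off-diagonal entries are $B_{ij} = (A_{ij} + A_{ji})/\sqrt{2}$, a sum of two independent $\mathcal{N}(0,1)$ variables rescaled by $1/\sqrt{2}$, hence $\mathcal{N}(0,1)$. Third, for independence, each upper-triangular output entry $B_{ij}$ with $i < j$ depends only on the pair $\{A_{ij}, A_{ji}\}$, and each diagonal entry $B_{ii}$ depends only on $A_{ii}$; these index sets are pairwise disjoint, so since the source entries of $A$ are iid, the entries $\{B_{ij}\}_{i \le j}$ are mutually independent, matching the GOE definition.

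For orthogonal invariance, I would leverage the representation just established. For any orthogonal $Q \in \mb{R}^{D \times D}$,
\[
Q M Q^\tr \overset{d}{=} \frac{Q A Q^\tr + Q A^\tr Q^\tr}{\sqrt{2}} = \frac{Q A Q^\tr + (Q A Q^\tr)^\tr}{\sqrt{2}},
\]
so it suffices to show $Q A Q^\tr \overset{d}{=} A$. Vectorizing gives $\mathrm{vec}(Q A Q^\tr) = (Q \kron Q)\, \mathrm{vec}(A)$. The Kronecker product $Q \kron Q$ is orthogonal, since $(Q \kron Q)^\tr (Q \kron Q) = (Q^\tr Q) \kron (Q^\tr Q) = I_{D^2}$, while $\mathrm{vec}(A) \sim \mathcal{N}(0, I_{D^2})$ is an isotropic Gaussian vector. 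Proposition \ref{prop:orthogonal_invariance_of_gaussian_vector} then yields $(Q \kron Q)\, \mathrm{vec}(A) \overset{d}{=} \mathrm{vec}(A)$, hence $Q A Q^\tr \overset{d}{=} A$, which together with the previous display gives $Q M Q^\tr \overset{d}{=} M$.

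The main subtlety, rather than a real obstacle, is ensuring independence of the entries $\{B_{ij}\}_{i \le j}$ by carefully partitioning the source indices of $A$ into disjoint groups per output entry. Once that bookkeeping is done, the whole argument reduces to linear combinations of jointly Gaussian variables and the orthogonal invariance of isotropic Gaussian vectors recorded in Proposition \ref{prop:orthogonal_invariance_of_gaussian_vector}.
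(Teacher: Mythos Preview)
Your proof is correct. It differs from the paper's in two respects. First, for the representation $M \overset{d}{=} (A+A^\tr)/\sqrt{2}$, the paper simply cites an external reference, whereas you supply the direct entrywise verification; your argument is complete and self-contained. Second, for the key step $QAQ^\tr \overset{d}{=} A$, the paper (in its appendix) compares probability densities directly, using that the density of $A$ is proportional to $\exp\bigl(-\tfrac{1}{2}\,\tx{tr}(A^\tr A)\bigr)$ and that $\tx{tr}\bigl((QAQ^\tr)^\tr(QAQ^\tr)\bigr)=\tx{tr}(A^\tr A)$. You instead vectorize and invoke Proposition~\ref{prop:orthogonal_invariance_of_gaussian_vector} via the orthogonality of $Q\kron Q$. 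Your route ties the result back to a proposition already stated in the paper and avoids writing down densities; the paper's trace computation is the standard random-matrix-theory argument and makes the role of the Frobenius norm explicit. Both are short and equally rigorous.
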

\begin{proof}
The results on the representation of the GOE matrix are presented in \cite{EdelmanRMT}.
Orthogonal invariance results immediately from the invariance of the corresponding matrix $A$, which is proved in appendix \ref{appendix:orthogonal_invariance}.
\end{proof}
The analogous result holds for $\text{}$ as well.
\begin{corollary}
  Matrix $M \in \mb{R}^{D \times D}$ follows $\text{GSOE}(D)$ if and only if there exists a matrix $A \in \mb{R}^{D \times D}$ with entries $A_{ij} \iid \mathcal{N}(0,1)$ such that
  \begin{align*}
    M \overset{d}{=} \frac{A - A^\tr}{\sqrt{2}}.
    \end{align*}
    In particular, M is orthogonally invariant, i.e., for any orthogonal matrix $Q \in \mb{R}^{D \times D}$,
    \begin{align*}
        Q M Q^\tr \overset{d}{=} M.
    \end{align*}
\end{corollary}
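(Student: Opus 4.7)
The plan is to follow the template of the preceding GOE theorem and reduce everything to a single direct calculation. It suffices to prove that whenever $A \in \mb{R}^{D \times D}$ has entries $A_{ij} \iid \mathcal{N}(0,1)$, the matrix $M := (A - A^\tr)/\sqrt{2}$ satisfies the three defining conditions of $\text{GSOE}(D)$. The biconditional then follows, since every GSOE matrix can be coupled to such an $A$, and conversely $(A-A^\tr)/\sqrt{2}$ always has the GSOE distribution.

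For the distributional check I would verify the three conditions in order. Skew-symmetry is immediate from $M^\tr = (A^\tr - A)/\sqrt{2} = -M$, which in particular forces $M_{ii} = 0$ almost surely. For the marginals, each entry $M_{ij} = (A_{ij} - A_{ji})/\sqrt{2}$ with $i < j$ is a linear combination of two independent standard Gaussians with coefficient vector of squared norm $1$, so $M_{ij} \sim \mathcal{N}(0,1)$. For independence across distinct unordered pairs $\{i,j\} \ne \{i',j'\}$ with $i<j$ and $i'<j'$, the four ordered indices $(i,j)$, $(j,i)$, $(i',j')$, $(j',i')$ are pairwise distinct, so $M_{ij}$ and $M_{i'j'}$ are functions of disjoint, and hence independent, entries of $A$. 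This matches the GSOE definition precisely.

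For the orthogonal invariance I would invoke the invariance of $A$ from appendix \ref{appendix:orthogonal_invariance}: vectorizing gives $\text{vec}(QAQ^\tr) = (Q \otimes Q)\,\text{vec}(A)$, and $Q \otimes Q$ is orthogonal since $(Q\otimes Q)(Q\otimes Q)^\tr = (QQ^\tr)\otimes(QQ^\tr) = I$, so $QAQ^\tr \overset{d}{=} A$. Conjugating the definition of $M$ then yields $QMQ^\tr = (QAQ^\tr - (QAQ^\tr)^\tr)/\sqrt{2}$, which has the same distribution as $(A - A^\tr)/\sqrt{2} = M$ because the skew-symmetrization map $B \mapsto (B - B^\tr)/\sqrt{2}$ is linear and deterministic.

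There is no real obstacle here; the argument is the exact parallel of the GOE case, with the symmetrization $A + A^\tr$ replaced by the skew-symmetrization $A - A^\tr$ and the separate diagonal variance computation dropping out because the diagonal vanishes identically. The only subtlety worth flagging is the normalization constant $1/\sqrt{2}$, which is pinned down by the $M_{ij}$ variance calculation in the second paragraph and is the single place where the factor differs from the analogous GOE statement.
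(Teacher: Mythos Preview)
Your proposal is correct and follows the same approach the paper intends: the paper's proof is literally ``Similar to the proof for the GOE case,'' and you have spelled out exactly that parallel argument, with the orthogonal invariance of $A$ (established in the appendix via the density/trace computation, or equivalently via your Kronecker-product argument) feeding into the conjugation of the skew-symmetrization map. One small quibble: your closing remark that the normalization $1/\sqrt{2}$ ``is the single place where the factor differs from the analogous GOE statement'' is misleading---the normalization constant is identical in both the GOE and GSOE representations; what changes is only the sign (symmetrization versus skew-symmetrization) and the resulting diagonal behavior.
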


\begin{proof}
Similar to the proof for the GOE case.
\end{proof}

Core concepts in probability theory can be defined in the same way for vector-valued random variables.
For example, \cite{dudley2002real} takes this way.

\begin{definition} [Independence of Banach Space Valued Random Variables]
  Let $X: \Omega \to B_1$ and $Y: \Omega \to B_2$ be random variables taking values in Banach spaces $B_1$ and $B_2$. $X$ and $Y$ are independent if for any Borel sets $S_1 \in \mc{B}(B_1)$ and $S_2 \in \mc{B}(B_2)$,
  \begin{align*}
      \mathbb{P}\big( X \in S_1, Y \in S_2 \big) = \mathbb{P}\big( X \in S_1 \big) \mathbb{P}\big( Y \in S_2 \big).
  \end{align*}
\end{definition}




Furthermore, these concepts can be extended to matrix-valued random variables and their relationship to vector-valued random variables.
\begin{remark}[Borel Isomorphism]
The vectorization operation is an isometric isomorphism between $L^2(\Omega, \mathbb{R}^{D \times D})$ and $L^2(\Omega, \mathbb{R}^{D^2})$. Consequently, $L^2(\Omega, \mathbb{R}^{D \times D})$ and $L^2(\Omega, \mathbb{R}^{D^2})$ are equivalent up to Borel isomorphism.
\end{remark}

\section{Block Isotropic Gaussian Fields} \label{sec: block isotropic}
Hereafter, the Euclidean space $\mb{R}^{2D}$ should be understood as the product space $\rdr$.
\begin{notation} [Coupled Vectors in $\mb{R}^{2D}$]
  For $x \in \mb{R}^{D \times D}$, we denote the first $D$ components and the last $D$ components of $x$ by $x^{(1)} \in \mb{R}^D$ and $x^{(2)} \in \mb{R}^D$, respectively. That is,
  \begin{align}
      x &= (x_1, x_2, \ldots, x_D, x_{D+1}, x_{D+2}, \ldots, x_{2D}) \in \mb{R}^{2D} \label{not:allinone_vector}\\
      &= (x^{(1)}, x^{(2)}) \in \rdr \label{concatenated_vector}.
  \end{align}
  We use the notations (\ref{not:allinone_vector}) and \eqref{concatenated_vector} interchangeably. Regarding the indexing, we use $x_i$ to denote the $i^{th}$ component of $x \in \mb{R}^{2D}$, while we use $x^{(1)}_i$ and $x^{(2)}_i$ to denote the $i^{th}$ components of $x^{(1)} \in \mb{R}^D$ and $x^{(2)} \in \mb{R}^D$, respectively. 
\end{notation}

\begin{definition}[Flip operation] \label{def:flip_operation}
  We define the linear transformation  $(\cdot)^{flip}: \rdr \to \rdr$ as
  \begin{align*}
      x^{flip} = (\xo, \xt)^{flip} = (\xt, \xo)
  \end{align*}
\end{definition}

\subsection{Skew-Symmetric Gaussian Fields}

\begin{definition}[Skew-Symmetric Function]
  A function $f: \rdr \to \mb{R}$ is skew-symmetric if for any $x = (x^{(1)}, x^{(2)}) \in \mb{R}^D \times \mb{R}^D$,
  \begin{align*}
      f(x^{flip}) = - f(x)
  \end{align*}
\end{definition}

\begin{proposition}[Sign of Derivatives of Skew-Symmetric Functions] \label{prop:sign_of_derivatives_of_skew_symmetric_functions}
  Let $f: \rdr \to \mb{R}$ be a skew-symmetric function, and let the required derivatives exist. Then, for any multi-index $\alpha = (\alpha_1, \alpha_2, \ldots, \alpha_{D})$ and $\beta = (\beta_1, \beta_2, \ldots, \beta_{D})$, the following holds:
  \begin{align*}
      \partial^{\alpha}_{x^{(1)}} \partial^{\beta}_{x^{(2)}} f(x^{(2)}, x^{(1)}) = (-1)^{|\alpha| + |\beta| + 1} \partial^{\beta}_{x^{(1)}} \partial^{\alpha}_{x^{(2)}} f(x^{(1)}, x^{(2)}),
  \end{align*}
\end{proposition}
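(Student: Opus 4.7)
The plan is to apply the differential operator $\partial^{\alpha}_{x^{(1)}} \partial^{\beta}_{x^{(2)}}$ to both sides of the defining skew-symmetry identity $f(x^{(1)}, x^{(2)}) = -f(x^{(2)}, x^{(1)})$ and track the result through the flip $\sigma$ of Definition \ref{def:flip_operation}. Writing skew-symmetry as $f \circ \sigma = -f$ makes the mechanism transparent. Since $\sigma$ is a linear involution that exchanges the first $D$ coordinates with the last $D$, its Jacobian is a block permutation, so the chain rule routes any derivative in $x^{(1)}_i$ acting on the composite $f \circ \sigma$ to a derivative in the \emph{second} slot of $f$ evaluated at the flipped point, and symmetrically routes a derivative in $x^{(2)}_i$ to the first slot of $f$.

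Iterating this rerouting, $\partial^{\alpha}_{x^{(1)}} \partial^{\beta}_{x^{(2)}}$ applied to $(f\circ\sigma)(x^{(1)}, x^{(2)})$ is a mixed partial of $f$ with $\alpha$ acting on the second slot and $\beta$ on the first, evaluated at $(x^{(2)}, x^{(1)})$. This crossover is the structural source of the swap $\alpha \leftrightarrow \beta$ on the right-hand side of the proposition. I would then formalize the argument by induction on $|\alpha|+|\beta|$. The base case $|\alpha| = |\beta| = 0$ is precisely skew-symmetry, $f(x^{(2)}, x^{(1)}) = -f(x^{(1)}, x^{(2)}) = (-1)^{0+0+1} f(x^{(1)}, x^{(2)})$. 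For the inductive step, I would apply an additional $\partial_{x^{(1)}_i}$ or $\partial_{x^{(2)}_i}$ to both sides of the hypothesis; Schwarz's theorem (available because all required derivatives exist) lets me reorder partials into canonical form, and matching the new derivative to the opposite block on the right-hand side as dictated by the chain rule through $\sigma$, the identity persists at order $|\alpha|+|\beta|+1$.

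The main obstacle is careful bookkeeping: tracking how each individual derivative is reassigned to the opposite block of $f$ by the chain rule, and combining the resulting parity with the single $-1$ produced by skew-symmetry to recover the exponent $(-1)^{|\alpha|+|\beta|+1}$. In practice the sign decomposes into the $-1$ from skew-symmetry plus a parity contribution $(-1)^{|\alpha|+|\beta|}$ accumulated as derivatives pass through the flip. A small-order sanity check at $|\alpha|+|\beta| = 1$ will make the slot reassignment and the sign pattern explicit before running the full induction.
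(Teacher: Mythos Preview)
The paper does not supply its own proof of this proposition; it simply cites \cite{cebra2022similaritysuppressescyclicitysimilar}. Your overall strategy---differentiate the identity $f\circ\sigma=-f$ and use the chain rule to reroute each $\partial_{x^{(1)}_i}$ acting on the composite to a slot-$2$ derivative of $f$ (and symmetrically for $\partial_{x^{(2)}_j}$)---is exactly the natural approach, and up through your description of the rerouting it is correct.

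The gap is in your sign bookkeeping. You assert that ``the sign decomposes into the $-1$ from skew-symmetry plus a parity contribution $(-1)^{|\alpha|+|\beta|}$ accumulated as derivatives pass through the flip.'' But the flip $\sigma$ is a linear coordinate permutation whose Jacobian has entries only in $\{0,1\}$; the chain rule through $\sigma$ therefore contributes \emph{no} sign whatsoever. Concretely,
\[
\partial_{x^{(1)}_i}\bigl[f(x^{(2)},x^{(1)})\bigr]=(\partial_{2,i}f)(x^{(2)},x^{(1)})
\]
with coefficient $+1$, and the same holds for every further derivative. Iterating and then invoking $f\circ\sigma=-f$ yields
\[
(\partial_1^{\alpha}\partial_2^{\beta}f)(x^{(2)},x^{(1)})\;=\;-(\partial_1^{\beta}\partial_2^{\alpha}f)(x^{(1)},x^{(2)}),
\]
with the single $-1$ coming from skew-symmetry and no additional parity factor. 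Your proposed order-one sanity check already exposes this: take $D=1$, $f(a,b)=a^{2}b-ab^{2}$, $\alpha=1$, $\beta=0$, and compare the two sides directly. Your mechanism cannot manufacture an $|\alpha|+|\beta|$-dependent sign, so either the exponent in the displayed identity is a transcription artifact relative to the cited source, or the intended reading of the notation differs from the one your chain-rule argument addresses. In either case, run the sanity check first and report the discrepancy rather than forcing the claimed exponent into the induction.
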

The proof of this proposition can be found in \cite{cebra2022similaritysuppressescyclicitysimilar}.
Notably, all skew-symmetric functions equal zero on the diagonal set $\{(x,x) \in \rdr: x \in \mb{R}^D\}$, since $f(x,x) = - f(x,x)$ implies $f(x,x) = 0$

The characterization of the skew-symmetric Gaussian field is presented in the previous work \cite{richland2024sharedendpointcorrelationshierarchyrandom}. We explain some results on which our work is based.
\begin{definition}[Skew-Symmetric Gaussian Field] 
  A Gaussian field $GF(\mu_f, k_f)$ on $\rdr$ is skew-symmetric if $\mu_f$ is $f(x^{flip}) = -f(x)$ for all $x \in \rdr$ with probability one.
\end{definition}

\begin{lemma}
  $GF(\mu_f, k_f)$ is a skew-symmetric Gaussian field if and only if its mean function $\mu_f$ and covariance function $k_f$ satisfy the following conditions:
  \begin{align*}
      \mu_f(x^{flip}) &= - \mu_f(x), \quad \all \text{ } x \in \rdr, \\
      k_f(x, y^{flip}) &=  - k_f(x, y), \quad \all \text{ } x,y \in \rdr.
  \end{align*}
\end{lemma}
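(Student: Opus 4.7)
The plan is to prove the two implications separately. The forward direction will be a direct propagation of the almost-sure identity $f(x^{flip}) = -f(x)$ through the expectation and covariance operators. The backward direction is slightly more delicate: I will construct, for each fixed $x$, an auxiliary variable $g(x) := f(x) + f(x^{flip})$, show that it is Gaussian with mean zero, and then use the two hypothesized symmetries of $\mu_f$ and $k_f$ to conclude that its variance is also zero, so that $g(x) = 0$ almost surely.

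For the forward implication, I would take expectation of both sides of $f(x^{flip}) = -f(x)$ to obtain $\mu_f(x^{flip}) = -\mu_f(x)$, and then use bilinearity of covariance to compute $k_f(x, y^{flip}) = \text{Cov}(f(x), f(y^{flip})) = \text{Cov}(f(x), -f(y)) = -k_f(x, y)$. For the backward implication, fix $x \in \rdr$. Because $f$ is a Gaussian field, the pair $(f(x), f(x^{flip}))$ is jointly Gaussian, so $g(x) = f(x) + f(x^{flip})$ is Gaussian by Proposition \ref{prop:Gaussianity_ops}. The first hypothesis yields $\mathbb{E}[g(x)] = \mu_f(x) + \mu_f(x^{flip}) = 0$. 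Expanding the variance gives
$$\text{Var}(g(x)) = k_f(x,x) + 2 k_f(x, x^{flip}) + k_f(x^{flip}, x^{flip}).$$
The covariance hypothesis with $y = x$ gives $k_f(x, x^{flip}) = -k_f(x, x)$. Applying the hypothesis with both arguments replaced by $x^{flip}$ yields $k_f(x^{flip}, x) = -k_f(x^{flip}, x^{flip})$, and combining this with the symmetry $k_f(x^{flip}, x) = k_f(x, x^{flip}) = -k_f(x,x)$ forces $k_f(x^{flip}, x^{flip}) = k_f(x, x)$. Substituting these identities gives $\text{Var}(g(x)) = k_f(x,x) - 2 k_f(x,x) + k_f(x,x) = 0$, so $g(x) \sim \mathcal{N}(0,0)$ is almost surely zero, which is exactly $f(x^{flip}) = -f(x)$ a.s.

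The only real subtlety I anticipate is the ordering of quantifiers: my argument establishes the pointwise statement \emph{for each} $x$, $f(x^{flip}) = -f(x)$ with probability one, whereas the definition of skew-symmetric Gaussian field naturally reads \emph{with probability one, for every} $x$ simultaneously. Since Gaussian fields are specified by their finite-dimensional distributions, the two formulations are equivalent at the level of law, which is the appropriate notion here; promoting the pointwise version to the uniform one requires only a standard continuous-modification argument and I would dispatch it with a brief remark rather than additional machinery.
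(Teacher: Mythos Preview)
Your proof is correct. The paper does not actually supply its own proof of this lemma: it is quoted from the prior work \cite{richland2024sharedendpointcorrelationshierarchyrandom} as background, so there is nothing to compare against directly. Your argument is the natural one---the forward direction is immediate from linearity of expectation and bilinearity of covariance, and your backward direction via $g(x)=f(x)+f(x^{flip})$ being a degenerate Gaussian is exactly the standard way to recover an almost-sure identity from moment conditions for Gaussian fields. Your handling of the quantifier issue (pointwise a.s.\ versus uniform a.s.) is also appropriate given that the paper works at the level of finite-dimensional distributions throughout.
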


\begin{lemma} (Properties of Non-Deterministic Skew-Symmetric Gaussian Field) \label{lemma:properties_of_skew_symmetric_gaussian_field}
  Let $GF(\mu_f, k_f)$ be a skew-symmetric Gaussian field $\mb{P} \bigg[ f(x) \neq \mu_f(x) \bigg] >0$. Then it follows that:

\begin{enumerate}
  \item $k_f: \rdr \times \rdr \to \mb{R}$ is neither strictly positive nor strictly negative. 
  \item $GF(\mu_f, k_f)$ is non-stationary.
\end{enumerate}
\end{lemma}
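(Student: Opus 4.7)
The plan is to exploit two consequences of skew-symmetry: the identity $k_f(x, y^{\text{flip}}) = -k_f(x,y)$, and the fact that skew-symmetric functions vanish on the diagonal $\{(z,z) : z \in \mb{R}^D\}$, which forces $f(z,z) = 0$ and hence $k_f((z,z),(z,z)) = 0$ almost surely. Combined with non-determinism, which guarantees the existence of some point $x_0 \in \rdr$ with $k_f(x_0, x_0) = \text{Var}(f(x_0)) > 0$, both statements should follow directly.

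For part 1, I would first observe that non-determinism furnishes a point $x_0$ (which must lie off the diagonal, since on the diagonal the field is identically zero) with $k_f(x_0, x_0) > 0$. Applying the skew-symmetry identity with $y = x_0$ then gives $k_f(x_0, x_0^{\text{flip}}) = -k_f(x_0, x_0) < 0$. Since $k_f$ attains both a strictly positive and a strictly negative value, it cannot be strictly positive nor strictly negative as a function on $\rdr \times \rdr$.

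For part 2, I would argue by contradiction. Suppose $GF(\mu_f, k_f)$ is stationary, so that $k_f(x,y) = \kappa(x-y)$ for some $\kappa : \rdr \to \mb{R}$. Then for every $y \in \rdr$, the pointwise variance equals $\kappa(0)$, independent of $y$. Evaluating at a diagonal point $y = (z,z)$ gives $\kappa(0) = \text{Var}(f(z,z)) = 0$, while evaluating at the non-deterministic point $x_0$ above gives $\kappa(0) = k_f(x_0, x_0) > 0$. This contradiction rules out stationarity.

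Neither step presents a real obstacle; the main conceptual point is simply to recognize that the diagonal supplies a canonical family of points where the variance is forced to vanish, which is incompatible both with sign-definiteness of $k_f$ (via the flip identity) and with stationarity (via the constancy of the variance under translation). I would make sure to state explicitly why the non-determinism hypothesis implies the existence of an off-diagonal $x_0$ with $k_f(x_0, x_0) > 0$, since on the diagonal $f$ agrees almost surely with $\mu_f = 0$.
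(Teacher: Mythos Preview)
Your argument is correct and is essentially the natural one: use the flip identity $k_f(x,y^{\text{flip}})=-k_f(x,y)$ from the preceding lemma together with a point $x_0$ of positive variance to exhibit both signs of $k_f$, and contrast the forced vanishing of the variance on the diagonal with its positivity at $x_0$ to rule out stationarity. Note that the paper does not supply its own proof of this lemma; it is quoted from the cited prior work, so there is no in-paper argument to compare against.
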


\begin{theorem} [Representation of Skew-Symmetric Gaussian Field] \label{thm:representation_of_skew_symmetric_gaussian_field} 
  Let $f$ be a Gaussian field on $\rdr$. Then, $f$ is skew-symmetric if and only if there exists a base Gaussian field $u$ on $\rdr$ such that
  \begin{align*}
      f(x) = u(x) - u(x^{flip}), \quad \all \text{ } x \in \rdr.
  \end{align*}
\end{theorem}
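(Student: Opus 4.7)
The plan is a two-way direct verification: one direction shows that any field of the form $u(\cdot) - u((\cdot)^{flip})$ is a skew-symmetric Gaussian field, and the other exhibits an explicit base Gaussian field $u$ reconstructing a given $f$. For the sufficiency direction, assume $f(x) = u(x) - u(x^{flip})$ for some Gaussian field $u$ on $\rdr$. Gaussianity of $f$ is immediate because every finite linear combination $\sum_i a_i f(x_i) = \sum_i a_i\bigl(u(x_i) - u(x_i^{flip})\bigr)$ is also a finite linear combination of values of the Gaussian field $u$, hence univariate Gaussian by the definition of a Gaussian field. Skew-symmetry uses only that the flip is an involution, $(x^{flip})^{flip} = x$, yielding $f(x^{flip}) = u(x^{flip}) - u(x) = -f(x)$.

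For the necessity direction, suppose $f$ is a skew-symmetric Gaussian field, and define $u := \tfrac{1}{2} f$. Then $u$ is itself a Gaussian field, since scaling by a constant preserves the multivariate Gaussian finite-dimensional distributions. Using $f(x^{flip}) = -f(x)$,
\begin{align*}
u(x) - u(x^{flip}) = \tfrac{1}{2}\bigl(f(x) - f(x^{flip})\bigr) = \tfrac{1}{2}\bigl(f(x) + f(x)\bigr) = f(x),
\end{align*}
as required.

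The main thing to note is that there is no genuine obstacle: the factor $\tfrac{1}{2}$ exactly absorbs the sign flip, so the hard-looking "only if" half reduces to a one-line identity. I would also remark that the representation is highly non-unique: for any Gaussian field $s$ on $\rdr$ that is symmetric under the flip, $s(x) = s(x^{flip})$ almost surely, the shifted base field $u + s$ also satisfies $f(x) = (u+s)(x) - (u+s)(x^{flip})$, since the symmetric part cancels. Thus the theorem can be recast as the statement that the linear operator $\Phi: u \mapsto u(\cdot) - u((\cdot)^{flip})$ is surjective onto the space of skew-symmetric Gaussian fields, with $f \mapsto f/2$ serving as a canonical right inverse.
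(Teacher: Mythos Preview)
Your proof is correct. The paper does not actually supply its own proof of this theorem; it states the result and attributes it to the cited prior work on shared-endpoint correlations, so there is no in-paper argument to compare against.

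Both directions of your argument are sound. Sufficiency is the routine check you give: finite linear combinations of $f$-values are finite linear combinations of $u$-values, hence Gaussian, and the involution property of the flip yields $f(x^{flip}) = -f(x)$. For necessity, your choice $u = \tfrac{1}{2}f$ is the right elementary move: since $f(x^{flip}) = -f(x)$ almost surely, one gets $u(x) - u(x^{flip}) = \tfrac{1}{2}\bigl(f(x) + f(x)\bigr) = f(x)$ almost surely, and $u$ is trivially Gaussian as a scalar multiple of $f$. Nothing in the theorem as stated forbids the base field from itself being skew-symmetric, so this is a legitimate witness.

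Your closing remark on non-uniqueness is accurate and worth retaining: any flip-symmetric Gaussian field $s$ can be added to $u$ without changing $u(\cdot) - u((\cdot)^{flip})$, so the representation has a large kernel. This observation dovetails with how the paper proceeds afterward, where additional structure (isotropy of $u$) is imposed precisely to pin down a useful base field, since $u = \tfrac{1}{2}f$ inherits the non-stationarity of $f$ and would not serve that purpose.
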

For example, in preference modeling and game theory, it is common to study skew-symmetric preference, or payout, functions that are constructed as a difference in utility functions \cite{chu2005preference, benavoli2024tutoriallearningpreferenceschoices, chau2022learninginconsistentpreferencesgaussian}.  Theorem 14 allows us to build skew-symmetric Gaussian fields from Gaussian fields without skew-symmetric constraints, referred to as "base" Gaussian fields.
\begin{definition} [Base Gaussian Field]
  Let $f$ be a skew-symmetric Gaussian field. Then, the Gaussian field $u$ defined as in Theorem \ref{thm:representation_of_skew_symmetric_gaussian_field} is the base Gaussian field associated with $f$.
\end{definition}

\subsection{Block Isotropic Gaussian Fields}
While stationarity and isotropy are desirable characteristics for analysis,
Lemma \ref{lemma:properties_of_skew_symmetric_gaussian_field} implies that every skew-symmetric non-deterministic Gaussian field is not stationary, so cannot be isotropic either. Nevertheless, analogous notions of stationarity and isotropy are introduced in \cite{richland2024sharedendpointcorrelationshierarchyrandom}. Recall that a skew-symmetric function is defined on the product space $\rdr$. Seeking distributional invariance for each component space separately is reasonable since the inputs $x, y$ to $f(x, y)$ represent elements of a shared component space $\mathbb{R}^D$ in most applications. For example, if $f$ is a preference function intended to compare elements drawn from the same space, it is natural to interpret each component space $\mathbb{R}^D$ in the same way and expect them to have the same structure \cite{benavoli2024tutoriallearningpreferenceschoices}. Consequently, we consider the invariance under translations and orthogonal transformations that are applied to each component space $\mb{R}^D$ separately, i.e., $\all \text{ } a^*\in \mb{R}^D \tx{ and orthogonal matrix } Q^* \in \rdr$,

\begin{align}
  &\{f(\xo, \xt)\}_{x\in \rdr} \overset{d}{=} \{f(\xo+ a^*, \xt + a^*)\}_{x\in \rdr} \\
  &\{f(\xo, \xt)\}_{x\in \rdr} \overset{d}{=} \{f(Q^*\xo, Q^*\xt)\}_{x\in \rdr} \label{eq:invariance_version_block_statinoary}
\end{align}
instead of invariance under arbitrary translations $T_a$ and orthogonal transformations $T_Q$ defined on the product space $\rdr$, 
\begin{align*}
  &\{f(\xo, \xt)\}_{x\in \rdr} \overset{d}{=} \{f(T_a(\xo, \xt)) \}_{x\in \rdr} \\
  &\{f(\xo, \xt)\}_{x\in \rdr} \overset{d}{=} \{f(T_Q(\xo, \xt))\}_{x\in \rdr} 
\end{align*}
  


A stochastic process exhibits block isotropy if it is defined on a product space, and its ensemble is invariant to unitary coordinate changes applied to the component spaces. If $f$ is expressed as a difference of base fields $u$, and $u$ is isotropic, then $f$ is block isotropic. Since we will work exclusively with this parameterization, we will, for brevity, use block isotropic to refer to models expressed as a difference of isotropic base fields.

\begin{definition}[Block Stationary/Isotropic Gaussian Field] \label{definition:block_isotropic_gaussian_field}
  The field $f$ is block stationary and block isotropic if the base field $u$ is stationary and isotropic, respectively.
\end{definition}

\begin{remark}
    Definition \ref{definition:block_isotropic_gaussian_field} implies the desired invariance, but there may exist processes with the desired invariance that cannot be expressed as a difference in isotropic base fields. 
\end{remark}



\begin{convention} \label{convention:mean_zero_base_field}
  If the mean function of the base process $u$ is constant, then the mean function of the skew-symmetric Gaussian field $f$ is always zero. We will assume that the mean function of the stationary or isotropic base process $u$ is zero.
\end{convention}

Hereafter, we primarily focus on block-isotropic Gaussian fields. The covariance functions of block isotropic Gaussian fields can be expressed in terms of those of their corresponding base fields. This is introduced in \cite{richland2024sharedendpointcorrelationshierarchyrandom, benavoli2024tutoriallearningpreferenceschoices}
\begin{lemma}
  Let $f \sim GF(\mu_f, k_f)$ be a Block Isotropic Gaussian field with associated base field $u$ defined as in \cref{thm:representation_of_skew_symmetric_gaussian_field}, with covariance function $k_u$.
   Then, 
   \begin{align*}
    k_f(x,y) &= 2 \Big( k_u(x,y) - k_u\big(x,y^{flip}\big) \Big) \text{ for all }  x,y \in \rdr.
   \end{align*}
\end{lemma}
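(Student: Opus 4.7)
The plan is to start from the representation theorem (Theorem \ref{thm:representation_of_skew_symmetric_gaussian_field}), which lets us write $f(x) = u(x) - u(x^{flip})$ for the isotropic base field $u$. Under Convention \ref{convention:mean_zero_base_field}, $u$ is zero-mean, so $f$ is also zero-mean and $k_f(x,y) = \E[f(x)f(y)]$. By bilinearity of the covariance,
\begin{align*}
    k_f(x,y)
    &= \Cov\big(u(x) - u(x^{flip}),\, u(y) - u(y^{flip})\big) \\
    &= k_u(x,y) - k_u(x, y^{flip}) - k_u(x^{flip}, y) + k_u(x^{flip}, y^{flip}).
\end{align*}

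The next step is to collapse the four terms into two. Since $u$ is isotropic, $k_u$ depends only on the Euclidean distance between its arguments. I would then note that the flip operation on $\mb{R}^{2D}$ is itself an orthogonal transformation: writing $x = (x^{(1)}, x^{(2)})$ and $y = (y^{(1)}, y^{(2)})$, one checks directly that
\begin{align*}
    \|x^{flip} - y^{flip}\|^2 = \|x^{(2)} - y^{(2)}\|^2 + \|x^{(1)} - y^{(1)}\|^2 = \|x-y\|^2,
\end{align*}
so $k_u(x^{flip}, y^{flip}) = k_u(x, y)$. The same component-swapping shows $\|x^{flip} - y\| = \|x - y^{flip}\|$, hence $k_u(x^{flip}, y) = k_u(x, y^{flip})$.

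Substituting these two identities into the four-term expansion yields
\begin{align*}
    k_f(x,y) = 2\big(k_u(x,y) - k_u(x, y^{flip})\big),
\end{align*}
which is the claim. There is no real obstacle here: the argument is a one-line bilinearity expansion followed by an elementary observation that the flip preserves Euclidean distance. The only place to be careful is to invoke isotropy rather than just stationarity, since stationarity alone would give $k_u(x^{flip}, y^{flip}) = \kappa(x^{flip} - y^{flip})$, and $x^{flip} - y^{flip}$ is a permutation of the coordinates of $x - y$, not equal to $x-y$ in general.
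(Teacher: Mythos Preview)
Your proof is correct and follows essentially the same route as the paper: expand $k_f$ by bilinearity into four $k_u$ terms, then use isotropy of $u$ to identify $k_u(x^{flip},y^{flip})=k_u(x,y)$ and $k_u(x^{flip},y)=k_u(x,y^{flip})$. You make the norm computations explicit where the paper simply asserts them, and your closing remark that stationarity alone would not suffice is a nice addition.
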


\begin{proof}
  By the definition of $f$ and the linearity of expectation, we have
  \begin{align*}
      k_f(x,y) &= \E\Big[ f(x) f(y) \Big] \\
      &= \E\Big[ \big( u(x) - u(x^{flip}) \big) \big( u(y) - u(y^{flip}) \big) \Big] \\
      &= \E\Big[ u(x)u(y) - u(x)u(y^{flip}) - u(x^{flip})u(y) + u(x^{flip})u(y^{flip}) \Big] \\
      &= k_u(x,y) - k_u\big(x,y^{flip}\big) - k_u\big(x^{flip},y\big) + k_u\big(x^{flip},y^{flip}\big).
  \end{align*}
  Since $u$ is isotropic, $k_u\big(x^{flip},y^{flip}\big) = k_u(x,y)$ and $k_u\big(x^{flip},y\big) = k_u\big(x,y^{flip}\big)$.
  Then,
  \begin{align*}
      k_f(x,y) &= 2 \Big( k_u(x,y) - k_u\big(x,y^{flip}\big) \Big).
  \end{align*}
\end{proof}
\begin{remark}
  The $L^2$ smoothness class of $k_f$ is determined by that of $k_u$ and vice versa, since they are equivalent up to linear operations.
\end{remark}

\begin{definition}[Block-Orthogonal Transformation $Q^b$]
For an orthogonal transformation $Q: \mb{R}^D \to \mb{R}^D$, we define the block-orthogonal transformation $Q^b: \rdr \to \rdr$ as
\begin{align*}
    Q^b x = \begin{bmatrix}
    Q & 0 \\
    0 & Q
    \end{bmatrix} \begin{bmatrix} x^{(1)} \\ x^{(2)} \end{bmatrix} = \begin{bmatrix} Q x^{(1)} \\ Q x^{(2)} \end{bmatrix}, \quad \all \text{ } x = \begin{bmatrix} x^{(1)} \\ x^{(2)} \end{bmatrix} \in \rdr.      
\end{align*}
\end{definition}

In this way, it transforms $x^{\text{flip}} = \begin{bmatrix} x^{(2)} \\ x^{(1)} \end{bmatrix} \in \rdr$ into 
\begin{align*}
  Q^b x^{\text{flip}} = \begin{bmatrix} Qx^{(2)} \\ Qx^{(1)} \end{bmatrix} = (Q^bx)^{\text{flip}} \label{equation:flip_commute_block_orthogonal_transformation}.
\end{align*}
\\
\begin{proposition} [Block Isotropic Gaussian Field is Block-Orthogonally Invariant] \label{proposition:orthogoanal_invariance_of_f}
  Let $Q: \mb{R}^D \to \mb{R}^D$ be an arbitrary orthogonal transformation and let $Q^b: \rdr \to \rdr$ be the corresponding block-orthogonal transformation. If $\{f(x)\}_{x \in \rdr}$ is a block isotropic Gaussian field, then it is distributionally invariant under $Q^b$, i.e., 
  \begin{align*}
      \{f(Q^b x)\}_{x \in \rdr} \overset{d}{=} \{f(x)\}_{x \in \rdr}.
  \end{align*}
\end{proposition}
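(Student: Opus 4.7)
The plan is to exploit the fact that Gaussian fields are fully determined by their mean and covariance functions, so equality in distribution between $\{f(Q^b x)\}_{x \in \rdr}$ and $\{f(x)\}_{x \in \rdr}$ reduces to checking equality of means and equality of covariance functions pointwise. The mean comparison is immediate from \textbf{Convention \ref{convention:mean_zero_base_field}}: since the base field $u$ has constant mean, $\mu_f \equiv 0$, so $\mathbb{E}[f(Q^b x)] = 0 = \mathbb{E}[f(x)]$ for every $x$.

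The bulk of the work is showing $k_f(Q^b x, Q^b y) = k_f(x,y)$ for all $x, y \in \rdr$. I would first apply the representation formula $k_f(x,y) = 2\big(k_u(x,y) - k_u(x, y^{\text{flip}})\big)$ from the preceding lemma, evaluated at $(Q^b x, Q^b y)$. Then I would use the commutation identity noted just before this proposition, $(Q^b y)^{\text{flip}} = Q^b(y^{\text{flip}})$, to rewrite the second term as $k_u(Q^b x, Q^b y^{\text{flip}})$. This reduces the problem to invariance of $k_u$ under simultaneous application of $Q^b$ to both arguments.

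The key observation is that $Q^b$ is itself an orthogonal transformation on the ambient space $\mb{R}^{2D}$: being block-diagonal with two orthogonal blocks, $(Q^b)^\tr Q^b = I_{2D}$. Hence $\|Q^b x - Q^b y\| = \|Q^b(x-y)\| = \|x-y\|$, and isotropy of $u$ (which says $k_u$ depends only on Euclidean distance in $\mb{R}^{2D}$) yields $k_u(Q^b x, Q^b y) = k_u(x,y)$ and $k_u(Q^b x, Q^b y^{\text{flip}}) = k_u(x, y^{\text{flip}})$. Substituting back gives $k_f(Q^b x, Q^b y) = k_f(x,y)$, completing the argument.

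I do not expect any significant obstacle here: the proof is essentially a bookkeeping exercise showing that block-orthogonal maps are orthogonal maps on the product space and therefore inherit the isotropy of $u$, combined with the fact that $Q^b$ commutes with the flip operation so the second term in the covariance representation is handled in the same way as the first.
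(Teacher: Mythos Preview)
Your proposal is correct and follows essentially the same route as the paper: reduce to checking $k_f(Q^b x, Q^b y)=k_f(x,y)$ via the representation $k_f=2(k_u-k_u(\cdot,\cdot^{\text{flip}}))$, use $(Q^b y)^{\text{flip}}=Q^b y^{\text{flip}}$, and invoke isotropy of $u$ (with $Q^b$ orthogonal on $\mb{R}^{2D}$) to pass $Q^b$ through $k_u$. If anything, you are slightly more explicit than the paper in justifying why $Q^b$ is orthogonal on the ambient space.
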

\begin{proof}
  Since mean-zero Gaussian fields are completely determined by their covariance functions, the conclusion immediately follows from the equality below.
  \begin{align*}
      k_f(Q^b x, Q^b y) &= 2 \Big( k_u(Q^b x, Q^b y) - k_u\big(Q^b x,(Q^b y)^{flip}\big) \Big) \\
      &= 2 \Big( k_u(Q^b x, Q^b y) - k_u\big(Q^b x,Q^b y^{flip}\big) \Big) \\ & = 2 \Big( k_u(x, y) - k_u\big(x,y^{flip}\big) \Big) \\ &
      = k_f(x,y),
  \end{align*} 
\end{proof}

\section{Local Quadratic Approximation of Block Isotropic Gaussian Processes} \label{sec: quadratic approximation}

The values of random fields are determined by both random and spatial effects. Expressing a random function $f_x(\omega)$, where $x \in \rdr$ and $\omega \in \Omega$, in terms of a Taylor basis $\{T_k : \rdr \to \mb{R}\}_{k=1}^{\infty}$ with corresponding random coefficients $\{A_k: \Omega \to \mb{R}\}_{k=1}^{\infty}$ separates these factors:
\begin{align*}
f_x(\omega) = \sum_{k=1}^{\infty} A_{k}(\omega)  T_k(x)
\end{align*}
This representation offers several advantages. First, it provides a clearer intuition, as the random coefficients can be interpreted as amplitudes associated with each basis function. Second, it reduces the study of infinite-dimensional objects to the analysis of countable (or finite) collections of coefficients.

In this work, we focus on the second-order Taylor expansion, as it balances expressivity and tractability \cite{boyd2004convex}. A quadratic local approximation can capture both local slopes and curvature. Furthermore, the coefficients corresponding to the bases $T_1$ and $T_2$ are given by the gradient and Hessian, respectively, making the quadratic expansion both contextually interpretable and technically tractable. We omit the dependence on the point about which we perform a Taylor expansion for brevity, unless clarity requires it.

\subsection{Second-Order Taylor Approximation of Block Isotropic Gaussian Fields at Matched Inputs} \label{sec:Taylor_approximation}

The quadratic Taylor approximation at the point $z \in \rdr$, where the components of each space coincide, reveals interesting local derivative structures of block isotropic Gaussian fields. Consequently, the local quadratic approximation at these points can be characterized in a particularly elegant way.

\begin{notation}[Function value, gradient, and Hessian] \label{notation_gradient_Hessian}
    Let $f \sim GF(\mu_f, k_f)$ be a Gaussian field over $\rdr$.
    When evaluated at a given point $z = (\zo, \zo)$, the function value $f(z,z)$, gradient $\grad f(z,z)$, and Hessian $H(z,z)$ are a scalar, $2T$ dimensional vector, and  $2D \times 2D$ matrix respectively. We introduce the following notation for the entries and sub-components of gradients and Hessians.
    \begin{align*}
        \der^{(1)}_i f(\zo, \zt) &= \der_{\xo_i}f(\xo, \zt)|_{\xo = \zo} && \der^{(2)}_j f(\zo, \zt) = \der_{\xt_j}f(\zo, \xt)|_{\xt = \zt} \\
        \der^{(1, 1)}_{(i,j)} f(\zo, \zt) &= \der^2_{\xo_i \xo_j}f(\xo, \xt)|_{\xo = \zo} && \der^{(2,2)}_{(i,j)} f(\zo, \zt) = \der^2_{\xt_i \xt_j}f(\zo, \xt)|_{\xt = \zt} \\        
        \der^{(1, 2)}_{(i,j)} f(\zo, \zt) &= \der^2_{\xo_i \xt_j}f(\xo, \xt)|_{\substack{\xo = \zo \\ \xt= \zt}} && \der^{(2,1)}_{(i,j)} f(\zo, \zt) = \der^2_{\xt_i \xo_j}f(\xo, \xt)|_{\substack{\xo = \zo \\ \xt= \zt}}       
    \end{align*}

    \begin{align*}
        \tx{(Truncated Gradient)} \ g(\zo,\zt) &= [\grad f(\zo, \zt)]_{1:D}, &&\grad^{(2)}f(\zo,\zt) = [\grad f(\zo, \zt)]_{D+1:2D}\\
        \tx{(Diagonal Block)}\ H^{(1, 1)}(\zo,\zt) &= [H(\zo,\zt)]_{1:D, 1:D}  &&H^{(2, 2)}(\zo,\zt) = [H(\zo,\zt)]_{D+1:2D, D+1:2D} \\
        \tx{(Off-Diagonal Block)} \ H^{(1,2)}(\zo,\zt) &= [H(\zo,\zt)]_{1:D, D+1:2D} &&H^{(1,2)}(\zo,\zt) = [H(\zo,\zt)]_{D+1:2D, 1: D}
    \end{align*}
\end{notation}

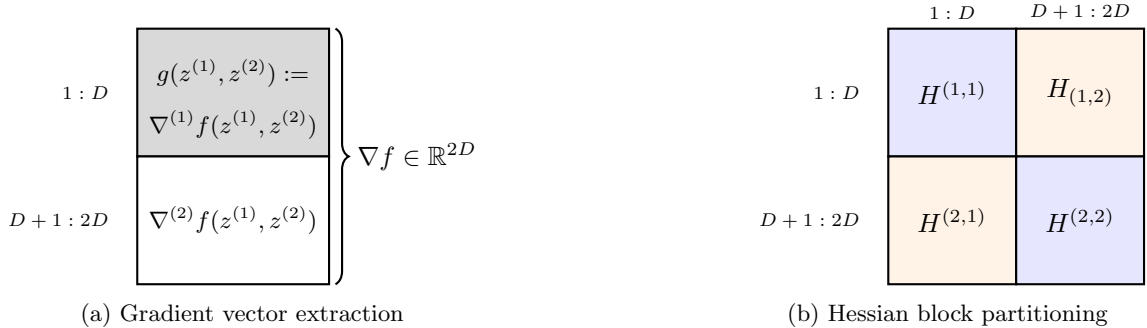
\begin{figure}[h]
    \centering
    \begin{subfigure}[b]{0.48\textwidth}
        \centering
        \begin{tikzpicture}[scale=0.85]
            
            \draw[thick, fill=gray!30] (0,2) rectangle (3.0,4);
            \node at (1.5, 3.3) {\small $g (z^{(1)}, z^{(2)}):=$};
            \node at (1.5, 2.5) {\small $\nabla^{(1)} f(z^{(1)}, z^{(2)})$};
            
            \draw[thick, fill=white] (0,0) rectangle (3.0,2);
            \node at (1.5, 1) {\small $\nabla^{(2)} f(z^{(1)}, z^{(2)})$};
            
            \node[anchor=east] at (-0.3, 3) {\scriptsize $1:D$};
            \node[anchor=east] at (-0.3, 1) {\scriptsize $D+1:2D$};
            
            \draw[decorate,decoration={brace,mirror,amplitude=5pt}, thick] (3.1,0) -- (3.1,4);
            \node[anchor=west] at (3.3, 2) {$\nabla f \in \mathbb{R}^{2D}$};
        \end{tikzpicture}
        \caption{Gradient vector extraction}
        \label{fig:grad_extraction}
    \end{subfigure}
    \hfill
    \begin{subfigure}[b]{0.48\textwidth}
        \centering
        \begin{tikzpicture}[scale=0.85]        
            \draw[thick, fill=blue!10] (0,2) rectangle (2,4);
            \node at (1, 3) {$H^{(1,1)}$};
            
            \draw[thick, fill=orange!10] (2,2) rectangle (4,4);
            \node at (3, 3) {$H_{(1,2)}$};
            
            \draw[thick, fill=orange!10] (0,0) rectangle (2,2);
            \node at (1, 1) {$H^{(2,1)}$};
            
            \draw[thick, fill=blue!10] (2,0) rectangle (4,2);
            \node at (3, 1) {$H^{(2,2)}$};
            
            \node[anchor=east] at (-0.3, 3) {\scriptsize $1:D$};
            \node[anchor=east] at (-0.3, 1) {\scriptsize $D+1:2D$};
            
            \node[anchor=south] at (1, 4) {\scriptsize $1:D$};
            \node[anchor=south] at (3, 4) {\scriptsize $D+1:2D$};
            
        \end{tikzpicture}
        \caption{Hessian block partitioning}
        \label{fig:Hessian_partition}
    \end{subfigure}
    
    \caption{Visual representation of the extraction of the sub-gradient $g$ and Hessian blocks.}
    \label{fig:grad_hess_blocks}
\end{figure}

\begin{convention}
    In our work, we predominantly evaluate derivatives at $z \in \rdr$ with $\zo = \zt \in \mb{R}^{D}$. To emphasize the coincidence, we use the notation $\zz \in \mb{R}^{D}$.
\end{convention}

\begin{lemma} [Properties of Gradient and Hessian of Skew-Symmetric Functions at $(z,z)$] \label{lem:sign_of_gradient_and_Hessian_blocks} 
Let $f: \mb{R}^{2D} \to \mb{R}$ be a skew-symmetric function, and let the required derivatives exist.
  When $\zo  =\zt = \zz$, the gradient and Hessian blocks of the skew-symmetric function $f$ satisfy the following relationships:
  \begin{align*}
    f(\zz,\zz) &= 0 \in \mb{R} \\
    \grad^{(1)} f(\zz,\zz) &= - \grad^{(2)} f(\zz,\zz) \in \mb{R}^D \\
    H^{(1,1)}(\zz,\zz) &= H^{(1,1)}(\zz,\zz)^\tr, \quad  H^{(2,2)}(\zz,\zz) = H^{(2,2)}(\zz,\zz)^\tr \in \mb{R}^{D \times D} \\
    H^{(1,1)}(\zz,\zz) &= - H^{(2,2)}(\zz,\zz) \in \mb{R}^{D \times D} \\
    H^{(1,2)}(\zz,\zz) &= H^{(2,1)}(\zz,\zz)^\tr = - H^{(1,2)}(\zz,\zz)^\tr = - H^{(2,1)}(\zz,\zz) \in \mb{R}^{D \times D}    
  \end{align*}
\end{lemma}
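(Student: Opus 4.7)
The plan is to derive each identity directly from the skew-symmetry relation $f(x^{(1)}, x^{(2)}) = -f(x^{(2)}, x^{(1)})$ by differentiating both sides an appropriate number of times with respect to the appropriate arguments, and then restricting to the diagonal $x^{(1)} = x^{(2)} = z^*$. Clairaut's theorem on the equality of mixed partials will handle the symmetry claims within each Hessian block. Proposition \ref{prop:sign_of_derivatives_of_skew_symmetric_functions} already encodes exactly this kind of computation, so I will largely be specializing it to orders $0$, $1$, and $2$.

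First I would dispose of $f(z^*, z^*) = 0$ by simply plugging $x^{(1)} = x^{(2)} = z^*$ into the skew-symmetry identity. Next, differentiating $f(x^{(1)}, x^{(2)}) = -f(x^{(2)}, x^{(1)})$ with respect to $x^{(1)}_i$ gives $\partial^{(1)}_i f(x^{(1)}, x^{(2)}) = -\partial^{(2)}_i f(x^{(2)}, x^{(1)})$ by the chain rule (the derivative on the right lands on the second argument slot of $f$, where $x^{(1)}_i$ now sits). Setting $x^{(1)} = x^{(2)} = z^*$ yields $\nabla^{(1)} f(z^*, z^*) = -\nabla^{(2)} f(z^*, z^*)$.

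For the diagonal Hessian blocks, the symmetry $H^{(1,1)} = (H^{(1,1)})^\tr$ and $H^{(2,2)} = (H^{(2,2)})^\tr$ follows from Clairaut's theorem applied within a single coordinate block. For the identity $H^{(1,1)}(z^*, z^*) = -H^{(2,2)}(z^*, z^*)$, I differentiate the skew-symmetry identity twice with respect to $x^{(1)}_i$ and $x^{(1)}_j$: both derivatives on the right-hand side land in the second argument slot of $f$, producing $\partial^{(1,1)}_{i,j} f(x^{(1)}, x^{(2)}) = -\partial^{(2,2)}_{i,j} f(x^{(2)}, x^{(1)})$, which on the diagonal gives the claim. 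For the off-diagonal block, Clairaut's theorem on $\partial_{x^{(1)}_i} \partial_{x^{(2)}_j} = \partial_{x^{(2)}_j} \partial_{x^{(1)}_i}$ gives $H^{(1,2)} = (H^{(2,1)})^\tr$ everywhere (not just on the diagonal). Differentiating the skew-symmetry identity with respect to $x^{(1)}_i$ and then $x^{(2)}_j$ gives $\partial^{(1,2)}_{i,j} f(x^{(1)}, x^{(2)}) = -\partial^{(2,1)}_{i,j} f(x^{(2)}, x^{(1)})$; restricting to the diagonal yields $H^{(1,2)}(z^*, z^*) = -H^{(2,1)}(z^*, z^*)$. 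Combining this with $H^{(1,2)} = (H^{(2,1)})^\tr$ forces $H^{(1,2)}(z^*, z^*) = -(H^{(1,2)}(z^*, z^*))^\tr$, closing the chain of equalities in the lemma.

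I expect no serious obstacles: the content is essentially bookkeeping. The only mild subtlety is being careful about which argument slot of $f$ a derivative lands in after applying the flip, since $\partial/\partial x^{(1)}_i$ of $f(x^{(2)}, x^{(1)})$ produces a partial in the \emph{second} slot of $f$, not the first. Keeping that chain-rule step explicit avoids sign errors. Once that is done, each claim is a one-line specialization of either Clairaut's theorem or the general identity in Proposition \ref{prop:sign_of_derivatives_of_skew_symmetric_functions}.
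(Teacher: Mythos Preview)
Your proposal is correct and complete: each identity follows exactly as you describe, by differentiating the skew-symmetry relation $f(x^{(1)},x^{(2)}) = -f(x^{(2)},x^{(1)})$ the appropriate number of times, tracking which slot each derivative lands in after the flip, and then restricting to the diagonal, with Clairaut's theorem supplying the within-block symmetries and the relation $H^{(1,2)} = (H^{(2,1)})^\tr$. The paper does not actually prove this lemma in-text; it simply cites \cite{cebra2022similaritysuppressescyclicitysimilar} (the same reference given for Proposition~\ref{prop:sign_of_derivatives_of_skew_symmetric_functions}), and your argument is precisely the natural specialization of that proposition to orders $0$, $1$, and $2$, so there is nothing to compare against beyond noting that your write-up is self-contained where the paper is not.
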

The proof of this result is given in \cite{cebra2022similaritysuppressescyclicitysimilar}.
With this equality, all results introduced hereafter are presented with respect to the extracted gradient $g$, diagonal Hessian blocks $\hoo$, and off-diagonal Hessian blocks $\hot$.
Note that the diagonal block is a symmetric matrix and the off-diagonal block is a skew-symmetric (anti-symmetric) matrix.


Let $\tilde{f}$ be the skew-symmetric second-order Taylor approximation of $f$ at the point $z = (\zz, \zz) \in \rdr$. The expression can be derived as follows from \ref{lem:sign_of_gradient_and_Hessian_blocks}. The expression is derived in \cite{cebra2022similaritysuppressescyclicitysimilar} with greater detail.
\begin{equation} \label{eq:Taylor_approximation} 
\begin{aligned}
    \tilde{f}(\xo, \xt)  
    &\simeq f(\zz, \zz) + (\grad f(\zz, \zz))^\tr \begin{bmatrix} \xo - \zz \\ \xt - \zz \end{bmatrix} + \frac{1}{2} \begin{bmatrix}  \xo - \zz \\ \xt - \zz \end{bmatrix}^\tr \grad^2 f(\zz, \zz) \begin{bmatrix} \xo - \zz \\ \xt - \zz \end{bmatrix}  \\
    &\simeq g^\tr \big( (\xo - \zz) - (\xt - \zz)\big) + \frac{1}{2} \bigg((\xo - \zz)^\tr H^{(1,1)} (\xo - \zz) -  (\xt - \zz)^\tr H^{(1,1)} (\xt - \zz) \bigg)  \\ 
    & \hdots +(\xo - \zz)^\tr H^{(1,2)} (\xt - \zz) 
\end{aligned}
\end{equation}



\begin{proposition} \label{prop:Taylor_approximation_is_gp}
    Let $f \sim GF(\mu_f, k_f)$ be a skew-symmetric Gaussian field over $\rdr$ and $\tilde{f}$ be its second-order Taylor approximation at the point $z = (\zz, \zz) \in \rdr$ as defined in equation \ref{eq:Taylor_approximation}. Then, $\tilde{f}$ is also a Gaussian field.\\
    In particular, when $f$ is mean zero and $k_f$ is twice differentiable in $L^2$, the mean function of $\tilde{f}$ is also zero.
\end{proposition}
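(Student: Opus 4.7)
The plan is to verify that $\tilde{f}$ satisfies the defining property of a Gaussian field directly: for any finite collection $\{y^1, \ldots, y^n\} \subset \rdr$ and coefficients $a_1, \ldots, a_n \in \mb{R}$, the linear combination $S := \sum_{i=1}^{n} a_i \tilde{f}(y^i)$ is a univariate Gaussian random variable. The mean-zero claim will then follow almost immediately from the interchange of differentiation and expectation, so I treat it last.

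First, I would substitute the Taylor formula \eqref{eq:Taylor_approximation} into $S$ and regroup. Each $\tilde{f}(y^i)$ is a deterministic polynomial in $y^{i,(1)} - \zz, y^{i,(2)} - \zz$ whose coefficients are entries of $g(\zz,\zz)$, $\hoo(\zz,\zz)$, and $\hot(\zz,\zz)$; consequently $S$ becomes a finite deterministic linear combination of those entries, equivalently, a finite linear combination of partial derivatives of the form $\der^{\alpha}_{\xo} \der^{\beta}_{\xt} f(\zz,\zz)$ with $|\alpha|+|\beta| \le 2$, all evaluated at the single point $z=(\zz,\zz)$. By the mean-square-differentiability assumption underpinning the existence of $g$, $\hoo$, and $\hot$, each such derivative is, as a common step size $t \to 0$, the $L^2$-limit of a finite-difference linear combination of values of $f$ on a grid near $z$. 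Summing these approximations with the deterministic coefficients from $S$ produces a joint sequence $S_t$, each of which is itself a finite linear combination of values of $f$ and hence univariate Gaussian by the definition of a Gaussian field; the triangle inequality in $L^2$ gives $S_t \to S$ as $t \to 0$. Proposition \ref{prop:Gaussianity_ops} then shows the $L^2$-limit $S$ is Gaussian, and since the $\{y^i\}, \{a_i\}$ were arbitrary, $\tilde{f}$ is a Gaussian field.

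For the mean-zero part, if $f$ is mean zero and $k_f$ is twice $L^2$-differentiable, the interchange of differentiation and expectation yields $\E\!\left[\der^{\alpha}_{\xo} \der^{\beta}_{\xt} f(\zz,\zz)\right] = \der^{\alpha}_{\xo} \der^{\beta}_{\xt} \E[f(\zz,\zz)] = 0$ for every multi-index pair appearing in the Taylor expansion. Hence every entry of $g(\zz,\zz)$, $\hoo(\zz,\zz)$, and $\hot(\zz,\zz)$ has zero expectation, and by linearity $\E[\tilde{f}(y)] = 0$ for all $y \in \rdr$. The main technical obstacle is the bookkeeping behind the joint approximating sequence $S_t$: one must argue that a finite sum of individually $L^2$-convergent finite-difference approximations converges in $L^2$ to the sum of the corresponding derivatives, so that Proposition \ref{prop:Gaussianity_ops} applies to the aggregate $S_t \to S$ rather than to each derivative in isolation. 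This step is routine via the triangle inequality and the finiteness of the sum, but should be stated carefully to avoid circularity.
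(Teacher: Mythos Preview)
Your proof is correct and follows essentially the same approach as the paper: both argue that $\tilde{f}(x)$ is a deterministic linear combination of partial derivatives of $f$ at $z$ and invoke Propositions~\ref{prop:Gaussianity_ops} and~\ref{prop:Gaussanity_under_differential_operator} to conclude Gaussianity, with the mean-zero claim following from the interchange of differentiation and expectation. Your version is more explicit about the joint-Gaussianity bookkeeping (constructing a single approximating sequence $S_t \to S$ in $L^2$ for an arbitrary finite linear combination), whereas the paper's two-line proof leaves that step implicit.
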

\begin{proof}
    Any linear combination of deterministic bases whose coefficients are Gaussian random variables satisfies the definition of a Gaussian field, due to Proposition \ref{prop:Gaussianity_ops}. 
    In the entrywise expression of the quadratic Taylor approximation above, each coefficient is a Gaussian random variable by Proposition \ref{prop:Gaussanity_under_differential_operator}, 
\end{proof}

\subsection{Distribution of the Local Geometrical Structures}
\begin{theorem} [Characterization of the Gradient and Hessian Blocks] \label{thm:Covariance_structure}
Let $f \sim GF(0, k_f)$ be a block isotropic Gaussian field over $\rdr$ with a covariance function $k_f$ which is four times differentiable in $L^2$. Let $g, H^{(1,1)}, H^{(1,2)}$ be the gradient and Hessian blocks of $f$ evaluated at the point $z = (\zz, \zz) \in \rdr$ as defined in \ref{notation_gradient_Hessian}. 
    \begin{align*}
        &\Cov(g_i, g_j) = -4h'(0) \cdot \dl_{ij} \\
        &\Cov(H^{(1,1)}_{i,j}, H^{(1,1)}_{k,l}) = 8h''(0) \cdot (\dl_{ik}\dl_{jl}+\dl_{il}\dl_{jk}) \\
        &\Cov(H^{(1,2)}_{i,j}, H^{(1,2)}_{k,l}) = 8h''(0) \cdot (\dl_{ik}\dl_{jl}-\dl_{il}\dl_{jk})
    \end{align*}
\end{theorem}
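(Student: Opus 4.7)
The plan is to reduce all three covariances to partial derivatives of the covariance kernel of $f$ via Proposition \ref{prop:Covariance_of_partial_derivatives}, then exploit the special structure of $k_f$ at matched inputs. Combining Lemma on the covariance of block isotropic fields with Notation \ref{notation:kernel_function_h}, we have
\begin{equation*}
k_f(x,y) \;=\; 2\bigl(h(\alpha(x,y)) - h(\beta(x,y))\bigr),
\end{equation*}
where I introduce the auxiliary quadratic forms
\begin{equation*}
\alpha(x,y) := \|\xo - \yo\|^2 + \|\xt - \yt\|^2,\qquad
\beta(x,y) := \|\xo - \yt\|^2 + \|\xt - \yo\|^2.
\end{equation*}
Observe that at $x=y=(\zz,\zz)$, both $\alpha$ and $\beta$ vanish, as do the residual vectors $u:=\xo-\yo$, $v:=\xt-\yt$, $p:=\xo-\yt$, $q:=\xt-\yo$. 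This is the pivotal structural observation that will kill most terms generated by the chain rule.

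Next, I would compute the first- and second-order partials of $\alpha$ and $\beta$ with respect to the coordinates of $\xo,\xt,\yo,\yt$; e.g.\ $\partial_{\xo_i}\alpha = 2u_i$, $\partial_{\xo_i}\partial_{\yo_j}\alpha = -2\delta_{ij}$, while $\partial_{\xo_i}\partial_{\yo_j}\beta = 0$, and symmetrically for the mixed blocks. For the gradient, Proposition \ref{prop:Covariance_of_partial_derivatives} gives $\Cov(g_i,g_j) = \partial_{\xo_i}\partial_{\yo_j} k_f(x,y)\big|_{x=y=(\zz,\zz)}$. By the chain rule, $\partial_{\xo_i}\partial_{\yo_j}h(\alpha) = h''(\alpha)\,\partial_{\xo_i}\alpha\,\partial_{\yo_j}\alpha + h'(\alpha)\,\partial_{\xo_i}\partial_{\yo_j}\alpha$; at $u=0$ only the second term survives, yielding $-2h'(0)\delta_{ij}$. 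The $h(\beta)$ contribution vanishes because the mixed second partial of $\beta$ in these variables is zero (and first partials are also zero at the diagonal). Multiplying by the factor $2$ gives the claimed $-4h'(0)\delta_{ij}$.

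For the Hessian blocks, the same mechanism applies but at fourth order. I would apply Proposition \ref{prop:Covariance_of_partial_derivatives} to write
\begin{equation*}
\Cov(\hoo_{ij},\hoo_{kl}) = \partial_{\xo_i}\partial_{\xo_j}\partial_{\yo_k}\partial_{\yo_l} k_f\big|_{x=y=(\zz,\zz)},
\end{equation*}
and analogously for the off-diagonal block with variables $\xo_i,\xt_j,\yo_k,\yt_l$. Expanding via Fa\`a di Bruno's formula produces polynomials in $h',h'',h''',h''''$ whose coefficients involve products of the residual vectors $u,v,p,q$ and Kronecker deltas from the second partials. Evaluating at $x=y=(\zz,\zz)$ zeros out every monomial containing any residual factor, so I only need to collect terms in which the four derivatives pair up entirely into second partials of $\alpha$ (or $\beta$). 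For $\alpha$ these pairings give $(\partial_{\xo_i}\partial_{\yo_k}\alpha)(\partial_{\xo_j}\partial_{\yo_l}\alpha) + (\partial_{\xo_i}\partial_{\yo_l}\alpha)(\partial_{\xo_j}\partial_{\yo_k}\alpha)$ times $h''(0)$, i.e.\ $4h''(0)(\delta_{ik}\delta_{jl}+\delta_{il}\delta_{jk})$, plus a $4h''(0)\delta_{ij}\delta_{kl}$ piece from pairing the two $x$-partials with each other and likewise for the $y$-partials. The $\beta$-contribution to $\Cov(\hoo_{ij},\hoo_{kl})$ only produces the $\delta_{ij}\delta_{kl}$ piece (since its cross partials between $\xo$ and $\yo$ vanish), and this precisely cancels the analogous term from $\alpha$ after the subtraction $k_f = 2(h(\alpha)-h(\beta))$. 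For $\hot$ the roles swap: the $\alpha$-contribution produces $\delta_{ik}\delta_{jl}$, while the $\beta$-contribution produces $\delta_{il}\delta_{jk}$, and the minus sign in $h(\alpha)-h(\beta)$ yields the antisymmetric combination.

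The only real obstacle is the combinatorial bookkeeping of the fourth derivative, but the vanishing of $u,v,p,q$ at the diagonal collapses the Fa\`a di Bruno sum to the handful of index-pairing terms described above, so the arithmetic is light. After multiplying by the overall factor $2$ in $k_f$, the stated covariances $-4h'(0)\delta_{ij}$, $8h''(0)(\delta_{ik}\delta_{jl}+\delta_{il}\delta_{jk})$, and $8h''(0)(\delta_{ik}\delta_{jl}-\delta_{il}\delta_{jk})$ fall out immediately.
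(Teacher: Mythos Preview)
Your proposal is correct and follows essentially the same route as the paper: both reduce the covariances to mixed partial derivatives of $k_f$ via Proposition~\ref{prop:Covariance_of_partial_derivatives}, write $k_f(x,y)=2\bigl(h(\alpha)-h(\beta)\bigr)$ with $\alpha=\|x-y\|^2$ and $\beta=\|x-y^{\mathrm{flip}}\|^2$ (the paper's $p$ and $q$), and exploit the vanishing of all first partials at $x=y=(\zz,\zz)$ to collapse the chain-rule expansion. The only cosmetic difference is that you organize the fourth-order computation through Fa\`a di Bruno pairings, whereas the paper's appendix differentiates step by step; the surviving terms and the cancellation of the $\delta_{ij}\delta_{kl}$ piece between $\alpha$ and $\beta$ are identical in both.
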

\begin{proof}
Due to Proposition \ref{prop:Covariance_of_partial_derivatives}, the covariance between derivatives equals the derivatives of the covariance function, thereby
\begin{align*}
    &\Cov(g_i, g_j) =\der^{(1)}_i \der^{(1)}_j k_f \big( (\zz, \zz) (\zz, \zz) \big) \quad\\
    &\Cov(H^{(1,1)}_{i,j}, H^{(1,1)}_{k,l}) = \der^{(1,1)}_{(i,j)} \der^{(1,1)}_{(k,l)} k_f \big( (\zz, \zz), (\zz, \zz) \big) \\
    &\Cov(H^{(1,2)}_{i,j}, H^{(1,2)}_{k,l}) = \der^{(1,2)}_{(i,j)} \der^{(1,2)}_{(k,l)}  k_f \big( (\zz, \zz), (\zz, \zz) \big) 
\end{align*}
Technical details are available in appendix \ref{appendix:Covariance_derivations}
\end{proof}

\begin{theorem} [Derivatives of Different Orders Are Uncorrelated]\label{thm:entrywise_uncorrelation}
    \begin{align*}
        &\Cov(g_i, f) = 0, \quad \Cov(\hoo_{i,j}, f) = 0 , \quad \Cov(\hot_{i,j}, f) = 0 \quad \all \text{ } i = 1, \cdots, D\\
        &\Cov(\hoo_{i,j}, g_k) = 0, \quad   \Cov(\hot_{i,j}, g_k ) = 0 \quad \all \text{ } i,j,k = 1, \cdots, D\\
        &\Cov(\hot_{i,j}, \hoo_{k,l}) = 0 \quad \all \text{ } i,j,k,l = 1, \cdots, D
    \end{align*}
\end{theorem}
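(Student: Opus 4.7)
The plan is to apply Proposition \ref{prop:Covariance_of_partial_derivatives} to rewrite each covariance as a mixed partial derivative of $k_f$ evaluated at $x = y = (\zz, \zz)$, and then exploit the representation $k_f(x, y) = 2\big(k_u(x, y) - k_u(x, y^{flip})\big)$ together with $k_u(x, y) = h(\|x - y\|^2)$. The covariances involving $f$ itself are immediate: since $f$ is skew-symmetric, $f(\zz, \zz) = 0$ almost surely, so any covariance against $f(\zz, \zz)$ vanishes.

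For $\Cov(\hoo_{i, j}, g_k)$ and $\Cov(\hot_{i, j}, g_k)$, each corresponds to a third-order mixed partial of $k_f$ at the diagonal. The key observation is that $k_u(x, y) = \tilde h(x - y)$ where $\tilde h(\tau) := h(\|\tau\|^2)$ is radially symmetric about $0$; its Taylor expansion at $0$ contains only monomials of even total degree in the components of $\tau$, so every partial $\partial^\gamma \tilde h(0)$ with $|\gamma|$ odd vanishes. Since $\partial^\alpha_x \partial^\beta_y k_u(x, y)\big|_{x=y} = (-1)^{|\beta|} (\partial^{\alpha + \beta} \tilde h)(0)$, this kills every odd-order mixed partial of $k_u$ at $x = y$. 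The identical statement applies to $k_u(x, y^{flip})$ at $y = (\zz, \zz)$, since $(\zz, \zz)^{flip} = (\zz, \zz)$ and hence $x - y^{flip} = 0$ there. Consequently, every odd-order partial of $k_f$ at $x = y = (\zz, \zz)$ vanishes, disposing of both third-order covariances.

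The remaining case, $\Cov(\hot_{i, j}, \hoo_{k, l})$, is fourth order and is the main obstacle because the parity argument alone is insufficient. I would apply Fa\`a di Bruno's formula: the first-order partials of $\|\tau\|^2$ vanish at $\tau = 0$ and its partials of order $\ge 3$ are identically zero, so only partitions that pair the four derivatives into two pairs of size two contribute, each contributing $h''(0)$ times a product of two second partials of $\|\tau\|^2$. These second partials are $2\delta_{ab}$ and therefore nonzero only when the two paired derivatives act on matching coordinates of $\mb{R}^{2D}$. For $k_u$, the four coordinate labels are $\{i, D{+}j, k, l\}$, i.e., three lie in slot $1$ and only one lies in slot $2$, so no pairing can be all same-slot and every contribution vanishes. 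For $k_u(x, y^{flip})$, the flip relabels the $y$-derivatives to slot-$2$ positions, giving labels $\{i, D{+}j, D{+}k, D{+}l\}$, again a mismatched count of $1$-vs-$3$, so every pairing still fails to be same-slot. Hence the fourth-order covariance is zero. An attractive alternative that bypasses this bookkeeping is to invoke orthogonal invariance of the base field $u$ under $Q = \mathrm{diag}(I_D, -I_D) \in O(2D)$, which negates the off-diagonal Hessian block of $u$ while preserving the slot-diagonal blocks, thereby forcing $(\hoo_f, \hot_f) \overset{d}{=} (\hoo_f, -\hot_f)$ and hence zero covariance.
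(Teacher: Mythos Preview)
Your proposal is correct and takes a more structural route than the paper. The paper (Appendix \ref{appendix:Covariance_derivations}) writes $k_f = 2[h(p)-h(q)]$ with $p=\|x-y\|^2$ and $q=\|x-y^{flip}\|^2$, then grinds out the relevant third- and fourth-order partials via repeated chain rule and evaluates at $x=y=(\zz,\zz)$; the vanishing comes from the fact that $p=q$ there and that all first-order partials of $p,q$ are coordinate differences that vanish on the diagonal. Your argument replaces that bookkeeping with (i) the trivial observation $f(\zz,\zz)=0$ a.s.\ for the first line, (ii) a parity argument---every odd-order partial of the radially even $\tilde h(\tau)=h(\|\tau\|^2)$ vanishes at the origin---for the third-order covariances, and (iii) a Fa\`a di Bruno / Wick-type enumeration of pair partitions for $\Cov(\hot_{i,j},\hoo_{k,l})$, where the $3$-vs-$1$ split of the four $\tau$-indices across the two $D$-blocks forces every pairing to contain a cross-block Kronecker delta and hence vanish. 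This is cleaner and generalises immediately (any odd-order derivative is uncorrelated with any even-order one, for instance), whereas the paper's computation has the compensating advantage of producing explicit intermediate expressions that it reuses later in the error-covariance calculation. One small caveat on your alternative symmetry argument: $Q=\mathrm{diag}(I_D,-I_D)$ does not fix $(\zz,\zz)$ unless $\zz=0$, so to read off the sign flip on $\hot$ at a general $\zz$ you must also use stationarity of $u$ and compose $Q$ with the translation carrying $Q(\zz,\zz)$ back to $(\zz,\zz)$.
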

\begin{proof}
    Similarly, we use \ref{prop:Covariance_of_partial_derivatives} and derive similar quantities. Details are in appendix \ref{appendix:Covariance_derivations}.
\end{proof}


\begin{proposition}
    Consider the same setting as \ref{thm:entrywise_uncorrelation}. Then, the random vector
    \begin{align*}
        \bigg(g, vec(\hoo), vec(\hot) \bigg)^\tr \tx{ is jointly normal } \in L^2\bigg(\Omega; \mb{R}^{D + D^2 + D^2} \bigg)
    \end{align*}
    Consequently, any subvector of $ \bigg(g, vec(\hoo), vec(\hot) \bigg)^\tr$ is also jointly normal.
\end{proposition}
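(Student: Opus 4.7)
The plan is to apply the standard criterion: a random vector is jointly Gaussian if and only if every finite linear combination of its entries is a univariate Gaussian random variable. Since each entry of the stacked vector is a mean-square partial derivative of $f$ of order at most two at the point $(\zz,\zz)$, I will express each entry as an $L^2$ limit of finite linear combinations of values of $f$, and then propagate this through an arbitrary linear combination of entries using the two closure properties already recorded in Proposition \ref{prop:Gaussianity_ops}.

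First, I would unwind the definition of mean-square differentiability iteratively. A first-order entry such as $g_i = \der^{(1)}_i f(\zz,\zz)$ is by definition the $L^2$ limit of difference quotients $t^{-1}(f(\zz + te_i, \zz) - f(\zz, \zz))$, each of which is a two-point linear combination of $f$-values. A second-order entry such as $\hoo_{i,j}$ or $\hot_{i,j}$ is a mean-square derivative of a first-order derivative, and by standard iteration (a double-limit argument, or equivalently a symmetric second difference quotient) is an $L^2$ limit of finite linear combinations of $f$-values at four points. The hypothesis that $k_f$ is four times differentiable in $L^2$, invoked already in Theorem \ref{thm:Covariance_structure}, guarantees the existence of all these $L^2$ limits.

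Second, I would fix an arbitrary coefficient vector $(a, B, C) \in \mb{R}^D \times \mb{R}^{D\times D} \times \mb{R}^{D\times D}$ and consider the scalar
\begin{equation*}
L = a^\tr g + \mathrm{trace}(B^\tr \hoo) + \mathrm{trace}(C^\tr \hot),
\end{equation*}
which is a generic real linear combination of entries of the stacked vector. By the previous step, each of the finitely many summands defining $L$ is an $L^2$ limit of linear combinations of $f$-values; since $L^2$ convergence is preserved under finite linear combinations, $L$ itself is an $L^2$ limit of a sequence $\{L_n\}$ of linear combinations of $f$-values. Each $L_n$ is a linear combination of jointly Gaussian variables (Proposition \ref{prop:Gaussianity_ops}, part 1) and is therefore univariate Gaussian, and by Proposition \ref{prop:Gaussianity_ops}, part 2, the $L^2$ limit $L$ is univariate Gaussian.

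Since the coefficient vector was arbitrary, every finite linear combination of entries of $(g, \text{vec}(\hoo), \text{vec}(\hot))^\tr$ is univariate Gaussian, which is exactly the definition of joint multivariate normality in $L^2(\Omega;\mb{R}^{D+D^2+D^2})$. The statement about subvectors is then immediate, because any linear combination of a subvector's entries is, after padding with zero coefficients, a linear combination of the full vector's entries, hence Gaussian. The main (and essentially only) delicate point is the second step above: writing second-order mean-square derivatives as $L^2$ limits of linear combinations of $f$-values, and being careful that the iterated limit can be realized as a single $L^2$-convergent sequence; once that is in hand, the rest is an immediate application of the closure properties of Gaussians under linear operations and $L^2$ limits.
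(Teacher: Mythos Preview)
Your proposal is correct and takes essentially the same approach as the paper: both invoke Proposition \ref{prop:Gaussianity_ops} to argue that the entries, being $L^2$ limits of finite linear combinations of values of the Gaussian field $f$, are jointly Gaussian. Your version is simply more explicit, spelling out the ``every linear combination is univariate Gaussian'' criterion and the passage through difference quotients, whereas the paper compresses all of this into the phrase ``obtained by applying linear operators (partial derivatives and vectorization) to the Gaussian field $f$.''
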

\begin{proof}
    The result follows directly from \ref{prop:Gaussianity_ops} since $\bigg(g, vec{\hoo}, vec(\hot) \bigg)^{\tr}$ are obtained by applying linear operators (partial derivatives and vectorization) to the Gaussian field $f$.
    A similar argument holds for any subvectors. 
\end{proof}


\begin{theorem} \label{thm:independence_structure}
    Under the same setting as above \ref{thm:entrywise_uncorrelation}, the random vector $g$ and random matrices $H^{(1,1)}, H^{(1,2)}$ are mutually independent. i.e.,
    \begin{align*}
        g \indp H^{(1,1)}, \quad g \indp H^{(1,2)}, \quad H^{(1,1)} \indp H^{(1,2)}
    \end{align*}
\end{theorem}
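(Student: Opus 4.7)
The plan is to reduce the matrix-valued independence statement to the classical fact that uncorrelated jointly Gaussian random vectors are independent, using the vectorization/Borel isomorphism noted earlier in the paper. All the analytic work has already been done in the preceding results; what remains is to assemble them correctly.

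First, I would invoke the preceding proposition to assert that the concatenated vector
\[
W := \big(g,\ \mathrm{vec}(H^{(1,1)}),\ \mathrm{vec}(H^{(1,2)})\big)^{\tr} \in \mb{R}^{D + D^2 + D^2}
\]
is jointly Gaussian. Then I would partition $W$ into the three sub-blocks corresponding to $g$, $\mathrm{vec}(H^{(1,1)})$, and $\mathrm{vec}(H^{(1,2)})$ and compute the three cross-covariance matrices. By Theorem \ref{thm:entrywise_uncorrelation}, every entry-wise cross-covariance vanishes: $\Cov(g_i, H^{(1,1)}_{k,l}) = 0$, $\Cov(g_i, H^{(1,2)}_{k,l}) = 0$, and $\Cov(H^{(1,1)}_{i,j}, H^{(1,2)}_{k,l}) = 0$ for all valid indices. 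Hence the three off-diagonal blocks of $\Cov(W)$ are zero matrices.

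Next I would apply the standard fact that for a jointly Gaussian random vector, blocks whose pairwise cross-covariances are zero are mutually independent. Applying this to the partition above gives that $g$, $\mathrm{vec}(H^{(1,1)})$, and $\mathrm{vec}(H^{(1,2)})$ are mutually independent as $\mb{R}^{D}$-, $\mb{R}^{D^2}$-, and $\mb{R}^{D^2}$-valued random variables, respectively.

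Finally, I would translate this statement back to the matrix level. The vectorization map $\mathrm{vec}: \mb{R}^{D \times D} \to \mb{R}^{D^2}$ is a Borel isomorphism (as noted in the remark on Borel isomorphism in \cref{sec: Gaussian Fields}), so independence of $\mathrm{vec}(H^{(1,1)})$ and $\mathrm{vec}(H^{(1,2)})$ is equivalent, by pulling back Borel sets through $\mathrm{vec}^{-1}$, to independence of $H^{(1,1)}$ and $H^{(1,2)}$ in the sense of the earlier definition of independence for Banach-space-valued random variables. The same argument establishes $g \indp H^{(1,1)}$ and $g \indp H^{(1,2)}$. I do not anticipate a serious obstacle: the only subtlety is to be explicit that one must invoke the Borel isomorphism for the matrix-valued statement, and to note that pairwise independence here follows from the stronger mutual independence coming from the block structure of $\Cov(W)$.
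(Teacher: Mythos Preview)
Your proposal is correct and follows essentially the same approach as the paper's own proof, which is simply the terse two-line observation that jointly normal plus uncorrelated implies independence, followed by the Borel isomorphism between $L^2(\Omega; \mb{R}^{D^2})$ and $L^2(\Omega; \mb{R}^{D \times D})$. You have merely (and correctly) unpacked those two sentences into explicit steps.
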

\begin{proof}
    Being jointly normal and uncorrelated implies independence. Then the result follows from Borel Isomorphism between $L^2(\Omega; \mb{R}^{D^2})$ and $L^2(\Omega; \mb{R}^{D \times D})$.
\end{proof}

\begin{theorem} \label{thm:goe_matrix}
    Under the same setting as above, $g \sim \mc{N}(0, I_{D})$, $H^{(1,1)} \sim GOE(D)$, $H^{(1,2)} \sim GSSE(D)$, up to scaling.
\end{theorem}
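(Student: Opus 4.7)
\medskip

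The plan is to combine three ingredients already assembled earlier in the paper: (i) the joint Gaussianity of the vector $\bigl(g, \mathrm{vec}(H^{(1,1)}), \mathrm{vec}(H^{(1,2)})\bigr)$ established just above; (ii) the second-moment formulas from Theorem~\ref{thm:Covariance_structure}; and (iii) the symmetry constraints from Lemma~\ref{lem:sign_of_gradient_and_Hessian_blocks} (namely that $H^{(1,1)}$ is symmetric and $H^{(1,2)}$ is skew-symmetric, with the diagonal of the latter forced to zero). Because joint Gaussianity turns zero covariance into independence, each of the three distributional claims reduces to a bookkeeping check that the covariance pattern in Theorem~\ref{thm:Covariance_structure} matches the definition of the target ensemble after an appropriate scalar rescaling.

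For the gradient component, Theorem~\ref{thm:Covariance_structure} gives $\mathrm{Cov}(g_i, g_j) = -4h'(0)\,\delta_{ij}$. Writing $\sigma_1^2 := -4h'(0)$ (positive because $h(\|\cdot\|^2)$ is a valid covariance kernel on $\mathbb{R}^{D}$, forcing $h'(0)\le 0$, and strictly negative on any non-degenerate direction), the coordinates of $g$ are jointly Gaussian, mean zero, and uncorrelated with common variance $\sigma_1^2$; hence $g/\sigma_1 \sim \mathcal{N}(0, I_D)$.

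For the symmetric diagonal block, specializing the formula $\mathrm{Cov}(H^{(1,1)}_{ij}, H^{(1,1)}_{kl}) = 8h''(0)(\delta_{ik}\delta_{jl}+\delta_{il}\delta_{jk})$ to the three relevant cases yields $\mathrm{Var}(H^{(1,1)}_{ii}) = 16 h''(0)$, $\mathrm{Var}(H^{(1,1)}_{ij}) = 8h''(0)$ for $i<j$, and $\mathrm{Cov}(H^{(1,1)}_{ij}, H^{(1,1)}_{kl}) = 0$ whenever the unordered pairs $\{i,j\}$ and $\{k,l\}$ differ. Combined with the symmetry $H^{(1,1)} = (H^{(1,1)})^\tr$, joint Gaussianity promotes these zero covariances to independence of $\{H^{(1,1)}_{ij}\}_{i\le j}$, and the $2{:}1$ ratio between diagonal and off-diagonal variances is exactly the GOE signature; consequently $H^{(1,1)}/\sqrt{8h''(0)} \sim \mathrm{GOE}(D)$. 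The off-diagonal block $H^{(1,2)}$ is handled analogously: the covariance formula gives zero variance on the diagonal (consistent with $H^{(1,2)}_{ii}=0$ a.s.\ from Lemma~\ref{lem:sign_of_gradient_and_Hessian_blocks}), variance $8h''(0)$ on each super-diagonal entry, the antisymmetry $\mathrm{Cov}(H^{(1,2)}_{ij}, H^{(1,2)}_{ji}) = -8h''(0)$ which is consistent with $H^{(1,2)} = -(H^{(1,2)})^\tr$, and vanishing covariance between genuinely distinct upper-triangular entries; joint Gaussianity then gives mutual independence of $\{H^{(1,2)}_{ij}\}_{i<j}$, matching the GSOE definition after rescaling by $\sqrt{8h''(0)}$.

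The only real subtlety is signs and positivity of the variance prefactors, i.e., that $-h'(0)$ and $h''(0)$ are nonnegative, which follows from the fact that $h(\|\cdot\|^2)$ must be a valid covariance kernel and that $u$ is mean-square twice differentiable; the rest of the argument is strictly a comparison of the entrywise moment tables in Theorem~\ref{thm:Covariance_structure} against the GOE and GSOE definitions, once joint Gaussianity converts uncorrelation into independence via the Borel isomorphism between $L^2(\Omega;\mathbb{R}^{D\times D})$ and $L^2(\Omega;\mathbb{R}^{D^2})$.
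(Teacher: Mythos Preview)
Your proposal is correct and follows essentially the same approach as the paper: both arguments combine the joint Gaussianity of the derivative entries with the covariance formulas of Theorem~\ref{thm:Covariance_structure}, then match the resulting entrywise variance pattern against the defining moment structure of $\mathcal{N}(0,I_D)$, $\mathrm{GOE}(D)$, and $\mathrm{GSOE}(D)$. Your write-up is in fact more careful than the paper's (you explicitly verify the vanishing covariance between distinct unordered index pairs and address the positivity of $-h'(0)$ and $h''(0)$), but the underlying strategy is identical.
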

\begin{proof}
    For a mean-zero Gaussian field $f$, each entry of the gradient $g$ follows a Gaussian distribution, by Proposition \ref{prop:Gaussanity_under_differential_operator}. 
    Hence, the vector $g$ follows a mean-zero multivariate Gaussian distribution, and its covariance matrix is given by $-4h'(0) I_D$ by \ref{thm:Covariance_structure}.
    \bigskip
    Furthermore, entries of random matrices are independent and distributed as follows:
    \begin{align*}
    &[\hoo]_{(i,j)} = [\hoo]_{(j,i)}, \text{ } [\hot]_{(i,j)} = -[\hoo]_{(j,i)} \tx{ with probability one. } \tx{Furthermore, } \\
    &[\hoo]_{(i,j)} = \begin{cases}
        8h''(0) \cdot A_{(i,j)}  \tx { if } i >  j\\ 
        16h''(0) \cdot A_{(i,j)}  \tx { if } i =j\\ 
    \end{cases} \quad \quad 
    [\hot]_{(i,j)} = \begin{cases}
        8h''(0) \cdot B_{(i,j)} \tx { if } i > j\\ 
        0 \tx{ if }i =j\\ 
    \end{cases} \\
    &\text{where } A_{(i,j)}, B_{(i,j)} \overset{i.i.d}{\sim} \mc{N}(0,1)
    \end{align*}
    Hence, $\hoo, \hot$ are draws from  $GOE(D)$ and $GSSE(D)$ scaled by $\sqrt{2} \cdot 8h''(0)$. 
\end{proof}

Finally, we present the closed-form expression for the covariance function of the Taylor approximation $\tilde{f}$, i.e., $k_{\tilde{f}}: \rdr \times \rdr \to \mb{R}$ defined by
\begin{align*}
    k_{\tilde{f}} \big( (\xo, \xt), (\yo, \yt) \big) := \Cov\big( \tilde{f}(\xo, \xt), \tilde{f}(\yo, \yt) \big) \quad \all \text{ } (\xo, \xt), (\yo, \yt) \in \rdr
\end{align*}
\begin{proposition}[Covariance function of the Quadratic Approximation] \label{prop:Covariance_function_Taylor_approximation}
    Under the same setting as Theorem \ref{thm:entrywise_uncorrelation} and where $h$ is the scalar function associated with the block isotropic covariance function $k_f$ as in \ref{notation:kernel_function_h}, the covariance function of the Taylor approximation $\tilde{f}$ at the matched input point $z$ is expressed as follows.
    \begin{align*}
        k_{\tilde{f}} \big( x, y  \big) &= -4h'(0) \cdot \bigg(\big \langle \Delta x, \Delta y \big \rangle_2 - \big \langle \Delta x, (\Delta y)^{flip} \big \rangle_2  \bigg) + 4h''(0) \cdot \bigg(\big \langle \Delta x, \Delta y \big \rangle_2^2 - \big \langle \Delta x, (\Delta y)^{flip} \big \rangle_2^2 \bigg)  \\
        &\tx{where } \Delta x = x - z, \Delta y = y - z
    \end{align*}
\end{proposition}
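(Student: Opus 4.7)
The plan is to compute the covariance directly from the closed-form Taylor expression in equation \eqref{eq:Taylor_approximation}, exploiting the mutual independence of $g$, $H^{(1,1)}$, $H^{(1,2)}$ established in \cref{thm:independence_structure} and the entrywise covariance formulas from \cref{thm:Covariance_structure}. Write $a = \xo - \zz$, $b = \xt - \zz$, $c = \yo - \zz$, $d = \yt - \zz$, so that $\tilde f(x) = g^\tr(a-b) + \tfrac{1}{2}(a^\tr \hoo a - b^\tr \hoo b) + a^\tr \hot b$, and similarly at $y$. Since $g, \hoo, \hot$ are mean-zero (this uses $\E[\tilde f] = 0$ from \cref{prop:Taylor_approximation_is_gp}) and mutually independent, every mixed cross-product in $\E[\tilde f(x)\tilde f(y)]$ splits into a product of expectations and vanishes. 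The covariance therefore decomposes as a sum of three clean contributions: a linear-linear $g$-piece, a diagonal-block $\hoo$-piece, and an off-diagonal-block $\hot$-piece.

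For the linear piece, I would use $\E[g^\tr u \, g^\tr v] = -4h'(0)\langle u,v\rangle$ with $u = a - b$, $v = c - d$. The key algebraic identity I would verify and then use throughout is
\begin{equation*}
\langle a-b, c-d\rangle = \langle a,c\rangle + \langle b,d\rangle - \langle a,d\rangle - \langle b,c\rangle = \langle \Delta x, \Delta y\rangle_2 - \langle \Delta x, (\Delta y)^{flip}\rangle_2,
\end{equation*}
which immediately yields the first term of the claimed expression.

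For the two quadratic pieces I would expand $a^\tr H a = \sum_{ij} a_i a_j H_{ij}$ and plug in the covariance tensors from \cref{thm:Covariance_structure}. The GOE-type tensor $\delta_{ik}\delta_{jl} + \delta_{il}\delta_{jk}$ collapses $\sum_{ijkl} a_i a_j c_k c_l \E[\hoo_{ij}\hoo_{kl}]$ to $16 h''(0)\langle a,c\rangle^2$, so after averaging over the four sign combinations the diagonal-block contribution is $4h''(0)[\langle a,c\rangle^2 - \langle a,d\rangle^2 - \langle b,c\rangle^2 + \langle b,d\rangle^2]$. The skew tensor $\delta_{ik}\delta_{jl} - \delta_{il}\delta_{jk}$ gives the off-diagonal contribution $8h''(0)[\langle a,c\rangle\langle b,d\rangle - \langle a,d\rangle\langle b,c\rangle]$.

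The only nontrivial step is recognizing that these two quadratic contributions combine as a difference of squares: adding them yields
\begin{equation*}
4h''(0)\bigl[(\langle a,c\rangle + \langle b,d\rangle)^2 - (\langle a,d\rangle + \langle b,c\rangle)^2\bigr] = 4h''(0)\bigl[\langle \Delta x, \Delta y\rangle_2^2 - \langle \Delta x, (\Delta y)^{flip}\rangle_2^2\bigr],
\end{equation*}
which is exactly the second term of the claim. I expect this recombination, rather than any individual expectation computation, to be the main (mild) obstacle: one has to notice that the GOE and GSOE contractions, which look structurally different, are precisely what is needed for the completed-square to be a difference of squares in the block inner products. Everything else is bookkeeping on the linearity of expectation and independence.
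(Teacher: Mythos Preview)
Your proposal is correct and follows essentially the same route as the paper: decompose $k_{\tilde f}$ into the three contributions from $g$, $\hoo$, and $\hot$ using the mutual independence of \cref{thm:independence_structure}, evaluate each via the entrywise covariances of \cref{thm:Covariance_structure}, and then recognize that the GOE and GSOE pieces combine into the difference of squares $(A+D)^2 - (B+C)^2$ with $A+D = \langle \Delta x,\Delta y\rangle_2$ and $B+C = \langle \Delta x,(\Delta y)^{\mathrm{flip}}\rangle_2$. The paper carries out the same three computations (with the details relegated to an appendix) and performs the identical recombination, so there is no substantive difference in method.
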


\begin{proof}
    Recall that the quadratic Taylor approximation at $z= (\zz, \zz)$ is expressed as 
    \begin{align*}
        &\tilde{f}(\xo, \xt) = g^\tr \big( \vo - \vt \big) + \frac{1}{2} \bigg((\vo)^\tr \hoo \vo -  (\vt)^\tr \hoo \vt \bigg) +(\vo)^\tr \hot \vt \\
        &\tilde{f}(\yo, \yt) = g^\tr \big( \too - \ttt \big) + \frac{1}{2} \bigg((\too)^\tr \hoo \vo -  (\ttt)^\tr \hoo \ttt \bigg) +(\too)^\tr \hot \ttt \\
        &\tx{where }  \vo := \xo - z^*, \quad \vt := \xt - z^* , \quad \too := \yo - z^*, \quad \ttt := \yt - z^*
    \end{align*}
We expand $k_{\tilde{f}}(x,y) = \Cov\big( \tilde{f}(\xo, \xt), \tilde{f}(\yo, \yt) \big)$ term by term.
First, by Theorem \ref{thm:independence_structure}, note that all cross-covariance terms involving $g$ and either $\hoo$ or $\hot$ vanish.

Then it remains to compute the following three terms using Theorem \ref{thm:Covariance_structure}. For further details, see appendix \ref{appendix:Covariances_interacting_terms}.
\begin{align*}
    &\text{1. }\Cov\big( g^\tr \big( \vo - \vt \big), g^\tr \big( \too - \ttt \big) \big) \\
    &=-4h'(0) \bigg( \big \langle \vo, \too \big \rangle_2 + \big \langle \vt, \ttt \big \rangle_2 - \big \langle \vo, \ttt \big \rangle_2 - \big \langle \vt, \too \big \rangle_2 \bigg) \\
    \bigskip 
    &\text{2. }\Cov\bigg( \frac{1}{2} \big((\vo)^\tr \hoo \vo -  (\vt)^\tr \hoo \vt \big), \frac{1}{2} \big((\too)^\tr \hoo \too -  (\ttt)^\tr \hoo \ttt \big) \bigg)  \\
    &= 4h''(0) \cdot \bigg( ||(\vo)^\tr \too||_2^2 + ||(\vt)^\tr \ttt||_2^2 - ||(\vo)^\tr \ttt||_2^2 - ||(\vt)^\tr \too||_2^2 \bigg)\\
    &\text{3. }\Cov\big( (\vo)^\tr \hot \vt, (\too)^\tr \hot \ttt \big)  \\
    &= 4h''(0) \cdot  \bigg( 2 \big \langle \vo, \too \big \rangle_2 \cdot \big \langle \vt, \ttt \big \rangle_2 - 2 \big \langle \vo, \ttt \big \rangle_2 \cdot \big \langle \vt, \too \big \rangle_2 \bigg) 
\end{align*}
Let $A = \big \langle \vo, \too \big \rangle_2, \quad B = \big \langle \vo, \ttt \big \rangle_2, \quad C = \big \langle \vt, \too \big \rangle_2, \quad D = \big \langle \vt, \ttt \big \rangle_2$. Then, collecting terms:
\begin{align*}
    &k_{\tilde{f}} \big( (\xo, \xt), (\yo, \yt) \big) = -4h'(0) \cdot (A + D - B - C) + 4h''(0) \cdot (A^2 + D^2 - B^2 - C^2 + 2AD - 2BC) \\
    &= -4h'(0) \cdot \big((A + D) - (B + C) \big) + 4h''(0) \cdot \big((A + D)^2 - (B + C)^2 \big) \\
    &= -4h'(0) \cdot \bigg(\big \langle \Delta x, \Delta y \big \rangle_2 - \big \langle \Delta x, (\Delta y)^{flip} \big \rangle_2  \bigg) + 4h''(0) \cdot \bigg(\big \langle \Delta x, \Delta y \big \rangle_2^2 - \big \langle \Delta x, (\Delta y)^{flip} \big \rangle_2^2 \bigg) 
\end{align*}
\end{proof}


\begin{corollary}[Invariance of $k_{\tilde{f}}$ under Block Orthogonal transformation $Q^b: \rdr \to \rdr$] \label{proposition:invariance_of_tilde}
    Then for any block orthogonal transformation $Q^b: \rdr \to \rdr$,
    \begin{align*}
        \{\tilde{f}(Q^b x) \}_{x \in \rdr} \overset{D}{=} \{\tilde{f}(x)\}_{x \in \rdr}.
    \end{align*}
\end{corollary}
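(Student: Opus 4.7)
The plan is to reduce the claimed distributional equality to a covariance identity and then verify that identity using the orthogonality and flip-commutation properties of $Q^b$. By Proposition \ref{prop:Taylor_approximation_is_gp}, $\tilde{f}$ is a mean-zero Gaussian field, and since $Q^b$ is a deterministic linear bijection of $\rdr$, the pulled-back field $x \mapsto \tilde{f}(Q^b x)$ is also a mean-zero Gaussian field (its finite-dimensional marginals are obtained from those of $\tilde{f}$ by a deterministic re-indexing of evaluation points). Centered Gaussian fields agree in law iff they share the same covariance kernel, so it suffices to prove
\begin{align*}
k_{\tilde{f}}(Q^b x, Q^b y) = k_{\tilde{f}}(x, y) \quad \text{for all } x, y \in \rdr.
\end{align*}

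Proposition \ref{prop:Covariance_function_Taylor_approximation} expresses $k_{\tilde{f}}$ as a polynomial in the two scalars $\langle \Delta x, \Delta y \rangle_2$ and $\langle \Delta x, (\Delta y)^{flip} \rangle_2$, with $\Delta w := w - z$. It therefore suffices to verify that both scalars are invariant under the substitution $(x,y) \mapsto (Q^b x, Q^b y)$. Two elementary ingredients drive this: (i) $Q^b$, being block-diagonal with orthogonal blocks, is itself orthogonal on $\mb{R}^{2D}$, so Euclidean inner products are preserved; (ii) $Q^b$ commutes with the flip, $(Q^b y)^{flip} = Q^b(y^{flip})$, as recorded immediately after the definition of $Q^b$.

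The one point requiring care is the affine shift by $z$: strictly speaking $\Delta(Q^b x) = Q^b x - z$ equals $Q^b \Delta x$ only when $Q^b z = z$. I would handle this by translating coordinates so that $z$ becomes the origin of $\rdr$; block stationarity, together with Theorems \ref{thm:Covariance_structure} and \ref{thm:goe_matrix}, guarantees that the joint law of $(g, H^{(1,1)}, H^{(1,2)})$ (and hence the law of $\tilde{f}$ viewed as a random function of $\Delta x$) is unaffected by this translation. In the centered frame, $\Delta(Q^b x) = Q^b \Delta x$, so ingredients (i) and (ii) give
\begin{align*}
\langle Q^b \Delta x, Q^b \Delta y \rangle_2 = \langle \Delta x, \Delta y \rangle_2, \qquad \langle Q^b \Delta x, Q^b (\Delta y)^{flip} \rangle_2 = \langle \Delta x, (\Delta y)^{flip} \rangle_2,
\end{align*}
and the covariance identity follows by direct substitution. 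This shift-by-$z$ bookkeeping is the only non-routine step; every other piece is a mechanical application of the closed-form expression already established in Proposition \ref{prop:Covariance_function_Taylor_approximation}.
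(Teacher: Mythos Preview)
Your proof is correct and follows the same route as the paper: reduce to a covariance identity (since $\tilde{f}$ is a centered Gaussian field) and verify it via the orthogonality of $Q^b$ together with its commutation with the flip operation. Your handling of the affine shift by $z$ is in fact more careful than the paper's own proof, which writes the inner-product identities directly in $x,y$ rather than $\Delta x,\Delta y$ and thus implicitly takes $z=0$; your appeal to block stationarity to justify that reduction is the right way to close that bookkeeping gap.
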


\begin{proof}  
    Since $\tilde{f}$ is a mean-zero Gaussian field, it suffices to show that the covariance function $k_{\tilde{f}}$ is invariant under the block orthogonal transformation $Q^b: \rdr \to \rdr$.
    For any $Q^b: \rdr \to \rdr$,
    \begin{align*}
        \big \langle Q^b x, Q^b y \big \rangle^2_2 &= \big \langle x, y \big \rangle^2_2 \\
        \big \langle Q^b x, Q^b y^{flip} \big \rangle^2_2 &= \big \langle x, y \big \rangle^2_2 
    \end{align*}
    The results follow directly from the definition of block orthogonal transformation and the properties of the inner product.
\end{proof}

\begin{remark}
    The orthogonal invariance result in Proposition~\ref{proposition:invariance_of_tilde} can be understood as a consequence of the orthogonal invariance of the random vector $g$ and the random matrices $H^{(1,1)}$ and $H^{(1,2)}$. That invariance follows from the same invariance of $f$ since $g$ and $H$ are derived from $f$.
\end{remark}

\section{Error induced by Quadratic Approximation} \label{sec: error}
\subsection{Error Fields of Block Isotropic Gaussian Fields}
For deterministic functions, the discrepancy between the function and its Taylor approximation increases with the distance from the expansion point. 
Then, for random fields, we expect the discrepancy to increase "stochastically" with distance.
In general, characterizing the distribution of the error induced by the Taylor approximation of random fields is challenging.
However, for Block Isotropic Gaussian fields, due to their intrinsic structure, the error is also a Gaussian field, and the pointwise error is a mean-zero Gaussian random variable. 
The variance of the error process at each input completely determines the distribution of the pointwise error and is available in closed form.
Furthermore, the maximal error over a neighborhood of the location $z^*$ used as the center of the approximation can be bounded from above with high probability. We extend this bound to randomly distributed inputs drawn from Gaussian distributions centered at $z^*$.

\begin{convention}
As noted in \ref{convention:mean_zero_base_field}, we assume the mean function of skew-symmetric Gaussian fields to be zero unless otherwise stated.
\end{convention}

\begin{definition}[Error Process]
    Let $f \sim GF(0, k_f)$ be a skew-symmetric Gaussian field over $\rdr$ with $k_f \in C^4((\rdr) \times (\rdr))$ differentiability and $\tilde{f}$ be its second-order Taylor approximation at the point $z = (\zz, \zz) \in \rdr$.
    Then, we define the error process $\mc{E}: \rdr \to \mb{R}$ as
    \begin{align*}
        \mc{E}(\xo, \xt) := \tilde{f}(\xo, \xt) - f(\xo, \xt) \quad \all \text{ } (\xo, \xt) \in \rdr
    \end{align*}
\end{definition}

\begin{proposition}
    Under the same setting as above, the error process $\mc{E}$ is also a Gaussian field over $\rdr$ with mean zero.
\end{proposition}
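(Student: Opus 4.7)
The plan is to verify the two defining properties separately: Gaussianity of the finite-dimensional distributions and the vanishing of the mean function. Since $\mc{E}(x) = \tilde{f}(x) - f(x)$ is a pointwise difference of two random fields, both properties will follow by linearity once we know each is Gaussian with mean zero.

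First, I would check the mean function. By linearity of expectation, $\mathbb{E}[\mc{E}(x)] = \mathbb{E}[\tilde{f}(x)] - \mathbb{E}[f(x)]$. The field $f$ is mean zero by Convention (and by our standing assumption that the base field $u$ has constant mean, which forces the skew-symmetric $f$ to have zero mean). Proposition \ref{prop:Taylor_approximation_is_gp} gives directly that $\tilde{f}$ is also mean zero when $f$ is mean zero and $k_f$ is twice differentiable in $L^2$ — and here we assume $C^4$ differentiability, so this hypothesis is met. Hence $\mathbb{E}[\mc{E}(x)] = 0$ for all $x \in \rdr$.

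Next, I would show that $\mc{E}$ is a Gaussian field, which by definition means that every finite linear combination $\sum_{i=1}^n a_i \, \mc{E}(x_i)$ is univariate Gaussian. Expanding,
\begin{equation*}
\sum_{i=1}^n a_i \, \mc{E}(x_i) = \sum_{i=1}^n a_i \, \tilde{f}(x_i) - \sum_{i=1}^n a_i \, f(x_i).
\end{equation*}
The second term is a finite linear combination of values of the Gaussian field $f$, hence Gaussian by definition. For the first term, recall that $\tilde{f}(x_i)$ is itself a linear combination (with deterministic, $x_i$-dependent coefficients) of the random quantities $f(z^*,z^*)$, the entries of $g$, and the entries of $\hoo$ and $\hot$. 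Each such entry is a mean-square partial derivative of $f$, obtained as an $L^2$-limit of finite linear combinations of values of $f$. Therefore $\sum_{i=1}^n a_i \, \tilde{f}(x_i)$ is also an $L^2$-limit of finite linear combinations of values of $f$.

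Putting these together, $\sum_{i=1}^n a_i \, \mc{E}(x_i)$ is an $L^2$-limit of finite linear combinations of values of the Gaussian field $f$. Each such finite linear combination is Gaussian by Part 1 of Proposition \ref{prop:Gaussianity_ops}, and the $L^2$-limit is Gaussian by Part 2 of the same proposition. Since the finite linear combination was arbitrary, $\mc{E}$ satisfies the definition of a Gaussian random field on $\rdr$, completing the proof. No step appears to pose a real obstacle here; the only mild bookkeeping point is to ensure that the derivative-based coefficients of $\tilde{f}$ are packaged as $L^2$-limits so that Proposition \ref{prop:Gaussianity_ops} applies, which is exactly the content of Proposition \ref{prop:Gaussanity_under_differential_operator} used in the proof of Proposition \ref{prop:Taylor_approximation_is_gp}.
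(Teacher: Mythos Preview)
Your proof is correct and follows the same strategy as the paper: cite Proposition~\ref{prop:Taylor_approximation_is_gp} for $\tilde{f}$ being mean-zero Gaussian, then take the difference. If anything, you are more careful than the paper on one point---the paper simply invokes Proposition~\ref{prop:Gaussianity_ops} to conclude that the difference of two Gaussian fields is Gaussian, whereas you explicitly justify the needed \emph{joint} Gaussianity of $f$ and $\tilde{f}$ by tracing $\tilde{f}(x_i)$ back to $L^2$-limits of linear combinations of values of $f$.
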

\begin{proof}
    Since $\tilde{f}$ is a Gaussian field as shown in \ref{prop:Taylor_approximation_is_gp} and $f$ is also a Gaussian field, their difference $\mc{E}$ is also a Gaussian field by \ref{prop:Gaussianity_ops}.
    Since both $f$ and $\tilde{f}$ are mean zero, $\mc{E}$ is also mean zero.
\end{proof}

Since $\mc{E}$ is a mean-zero Gaussian field, its distribution is fully determined by its covariance function.
Hereafter, we assume $f$ to be a block isotropic Gaussian field with its covariance function $k_f$ being four times differentiable in $L^2$.
\begin{theorem} \label{thm:error_orthogonal_invariance}
    Under the same setting as above, the error process $\mc{E}$ is invariant under block orthogonal transformation $Q^b: \rdr \to \rdr$.
    \begin{align*}
        \{\mc{E}(Q^b x\}_{x \in \rdr} \overset{d}{=} \{\mc{E}(x)\}_{x \in \rdr}.
    \end{align*}
\end{theorem}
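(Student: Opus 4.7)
The field $\mc E = \tilde f - f$ is a Gaussian field (as the difference of two jointly Gaussian fields, since $\tilde f$ is built from $f$ via $L^2$-continuous linear operations) with mean zero. Its distribution is therefore entirely determined by its covariance function, and the theorem reduces to verifying $k_{\mc E}(Q^b x, Q^b y) = k_{\mc E}(x, y)$ for all $x, y \in \rdr$. I would expand bilinearly,
\begin{equation*}
k_{\mc E}(x,y) = k_f(x,y) - \Cov(\tilde f(x), f(y)) - \Cov(f(x), \tilde f(y)) + k_{\tilde f}(x,y),
\end{equation*}
where the first and last terms are invariant under $(x,y) \mapsto (Q^b x, Q^b y)$ by Proposition~\ref{proposition:orthogoanal_invariance_of_f} and Corollary~\ref{proposition:invariance_of_tilde}, respectively. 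The task therefore reduces to establishing invariance of the two cross-covariances.

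For $C(x, y) := \Cov(\tilde f(x), f(y))$, I would substitute the Taylor expansion \eqref{eq:Taylor_approximation} so that $\tilde f(x)$ becomes a polynomial in $(x - z)$ with Gaussian coefficients $g$, $\hoo$, $\hot$. Proposition~\ref{prop:Covariance_of_partial_derivatives} then identifies each coefficient's covariance with $f(y)$ as the corresponding first- or second-order partial derivative of $a \mapsto k_f(a, y)$ at $a = z$; equivalently, $C(x, y)$ is the second-order Taylor polynomial of $k_f(\cdot, y)$ around $z$, evaluated at $x$. Block-isotropy of $k_f$, obtained by differentiating the identity $k_f(Q^b a, Q^b b) = k_f(a, b)$ in $a$, yields the equivariance of these derivative tensors under $Q^b$, and the contractions with $(x - z)$ collapse back into invariant inner products, giving $C(Q^b x, Q^b y) = C(x, y)$. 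The symmetric argument handles $\Cov(f(x), \tilde f(y))$.

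The main obstacle is the bookkeeping in the previous step: the $Q^b$ factors arising from the polynomial weights and from the derivative tensors of $k_f$ at $z$ must be tracked carefully, and the cancellation relies on $Q^b$ fixing the expansion point $z$ (the effective setting under the paper's treatment of the center of expansion, as in the proof of Corollary~\ref{proposition:invariance_of_tilde}). A cleaner alternative is to regard $\mc E = (T_z - I)[f]$ as a deterministic $L^2$-continuous linear functional applied to $f$; whenever this functional commutes with precomposition by $Q^b$, the block-orthogonal invariance of $f$ given by Proposition~\ref{proposition:orthogoanal_invariance_of_f} propagates directly to $\mc E$, obviating the direct computation.
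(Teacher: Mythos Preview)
Your proposal is correct and more careful than the paper's own argument. The paper disposes of the theorem in one line: both $f$ and $\tilde f$ are block-orthogonally invariant (Proposition~\ref{proposition:orthogoanal_invariance_of_f} and Corollary~\ref{proposition:invariance_of_tilde}), hence so is their difference. As stated this skips a step, since marginal distributional invariance of two processes does not in general yield invariance of their difference---one needs invariance of the \emph{joint} law of $(f,\tilde f)$. Your covariance expansion supplies precisely this missing piece by isolating the cross-terms $\Cov(\tilde f(x), f(y))$ and verifying them directly. Your functional alternative, viewing $\mc E = (T_z - I)[f]$ with the operator commuting with $Q^b$-precomposition (granted $Q^b z = z$), is really the rigorous version of what the paper's one-liner is pointing at: because $\tilde f$ is a deterministic measurable functional of $f$, the joint law of $(f,\tilde f)$ is a pushforward of the law of $f$ alone, so invariance of $f$ propagates to the pair and hence to $\mc E$. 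You also correctly flag the need for $Q^b z = z$, which the paper leaves implicit throughout its treatment of the expansion point. In short, the high-level idea is the same, but you supply the joint-invariance justification that the paper's proof omits.
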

\begin{proof}
    Under the assumptions, both $f$ and $\tilde{f}$ are invariant under block orthogonal transformation $Q^b: \rdr \to \rdr$ as shown in \ref{proposition:invariance_of_tilde} and \ref{proposition:orthogoanal_invariance_of_f}.
    Then their difference must share the same invariances.
\end{proof}

We now present the closed-form expression for the covariance function of the error process $\mc{E}$, i.e., $k_{\mc{E}}: \rdr \to \mb{R}^D$ defined by
\begin{align*}
    k_{\mc{E}} \big( (\xo, \xt), (\yo, \yt) \big) := \Cov \big( \mc{E}(\xo, \xt), \mc{E}(\yo, \yt) \big) \quad \all \text{ } (\xo, \xt), (\yo, \yt) \in \rdr.
\end{align*}
The covariance function of the error process can be expressed in terms of the covariance function of the base field. 
Let $k_u: \rdr \to \mathbb{R}$ denote the covariance function of the base field, which satisfies $f(x_1, x_2) = u(x_1, x_2) - u(x_2, x_1)$. For Block Isotropic Gaussian fields (cf. Definition \ref{definition:block_isotropic_gaussian_field}), the covariance functions of the base fields are isotropic. They can thus be expressed as $h: \mathbb{R}_{\geq0} \to \mathbb{R}$ as we introduced in Notation \ref{notation:kernel_function_h}.
 
\begin{proposition} \label{thm:Covariance_function_error_process}
    Here, $h: \mb{R}_{\geq 0} \to \mb{R}$ is the scalar function associated with the covariance function of the base field. Then, the covariance function of the error process $\mc{E}$ is:
    \begin{align*}
        k_{\mc{E}} \big( x, y \big) &= 2 \bigg( h\big( \|x - y\|_2^2  - h\big( \|x - y^{flip}\|_2^2 \big) \bigg) + 4 \bigg(h'\big(\|\Delta x\|_2^2\big) + h'\big(\|\Delta y\|_2^2\big) - h'\big( 0 \big)\bigg) \bigg(\langle \Delta x, \Delta y \big \rangle_2 - \big \langle \Delta x, (\Delta y)^{flip} \big \rangle_2\bigg) \\
    &\quad  -4 \bigg(h''\big(\|\Delta x\|_2^2\big) + h''\big(\|\Delta y\|_2^2\big) - h''\big( 0 \big)\bigg) \bigg(\langle \Delta x, \Delta y \big \rangle_2^2 - \big \langle \Delta x, (\Delta y)^{flip} \big \rangle_2^2\bigg)
    \end{align*}
\end{proposition}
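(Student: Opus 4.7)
The plan is to expand $k_{\mc{E}}(x,y)$ using bilinearity of covariance:
\begin{equation*}
k_{\mc{E}}(x,y) = \Cov\big(\tilde f(x)-f(x),\, \tilde f(y)-f(y)\big) = k_{\tilde f}(x,y) + k_f(x,y) - \Cov(\tilde f(x), f(y)) - \Cov(f(x),\tilde f(y)).
\end{equation*}
Two of these pieces are already in hand: $k_f(x,y) = 2h(\|x-y\|^2) - 2h(\|x-y^{flip}\|^2)$ from the preceding lemma, and $k_{\tilde f}(x,y)$ from Proposition \ref{prop:Covariance_function_Taylor_approximation}. So the real task is the two cross-covariances.

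For the cross terms I would use the following key observation. Since $\tilde f(x)$ is, by construction, a linear combination of $f(z)$, the entries of $\nabla f(z)$, and the entries of $\nabla^2 f(z)$, with deterministic coefficients depending only on $x-z$, bilinearity of covariance combined with Proposition \ref{prop:Covariance_of_partial_derivatives} yields
\begin{equation*}
\Cov(\tilde f(x), f(y)) \;=\; \bigl[T^{(1)}_z k_f(\,\cdot\,, y)\bigr](x),
\end{equation*}
the second-order Taylor polynomial of $k_f(w,y)$ in its first argument $w$ about $w=z$, evaluated at $w=x$. Symmetrically, $\Cov(f(x),\tilde f(y))$ is the Taylor polynomial of $k_f(x,\cdot)$ in its second argument about $z$, evaluated at $y$. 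I would then compute these two Taylor polynomials directly via chain rule on $k_f(w,y) = 2h(\|w-y\|^2) - 2h(\|w-y^{flip}\|^2)$. The only identity required to keep this clean is the basic fact that $z^{flip}=z$, which implies $z-y = -\Delta y$ and $z-y^{flip} = -(\Delta y)^{flip}$, and that $\|\Delta y\|^2 = \|(\Delta y)^{flip}\|^2$. A direct computation gives
\begin{equation*}
\Cov(\tilde f(x), f(y)) = -4h'(\|\Delta y\|^2)(A-B) + 4h''(\|\Delta y\|^2)(A^2 - B^2),
\end{equation*}
where $A := \langle \Delta x,\Delta y\rangle$ and $B := \langle \Delta x,(\Delta y)^{flip}\rangle$, and similarly $\Cov(f(x),\tilde f(y))$ is the same expression with $\|\Delta y\|^2$ replaced by $\|\Delta x\|^2$ (here one uses $\langle (\Delta x)^{flip},\Delta y\rangle = \langle \Delta x,(\Delta y)^{flip}\rangle$ since the flip is an isometry). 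Note that the constant and $\|\Delta x\|^2$-type terms in the Taylor expansion cancel between the two summands of $k_f$, reflecting $f(z^*,z^*)=0$.

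Substituting the four pieces into the decomposition, the $(A-B)$ and $(A^2-B^2)$ coefficients aggregate into $4\bigl(h'(\|\Delta x\|^2)+h'(\|\Delta y\|^2)-h'(0)\bigr)$ and $-4\bigl(h''(\|\Delta x\|^2)+h''(\|\Delta y\|^2)-h''(0)\bigr)$ respectively, while the $2(h(\|x-y\|^2)-h(\|x-y^{flip}\|^2))$ contribution from $k_f$ is left untouched, yielding the stated formula. The main obstacle is not conceptual but notational: correctly applying the chain rule on $h(\|\cdot-y^{flip}\|^2)$ in the first argument (and the analogous expression in the second), tracking that the flip acts linearly and self-adjointly so that $\nabla_w\|w-y^{flip}\|^2 = 2(w-y^{flip})$, and then matching up the resulting inner products in $\Delta x,\Delta y,(\Delta x)^{flip},(\Delta y)^{flip}$ using the isometry of the flip. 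Once this bookkeeping is done, the cancellation of the $h(\|\Delta y\|^2)$ and $h'\|\Delta x\|^2$ constant-in-direction contributions is automatic.
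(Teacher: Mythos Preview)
Your proposal is correct and follows essentially the same route as the paper: the same bilinear decomposition of $k_{\mc E}$ into $k_{\tilde f}+k_f$ minus the two cross terms, the same cross-covariance formulas, and the same final aggregation of the $(A-B)$ and $(A^2-B^2)$ coefficients. Your observation that $\Cov(\tilde f(x),f(y))$ is the second-order Taylor polynomial of $k_f(\cdot,y)$ about $z$ is a clean conceptual packaging of what the paper carries out more explicitly---expanding $\tilde f$ into its $g$, $H^{(1,1)}$, $H^{(1,2)}$ components and applying Proposition~\ref{prop:Covariance_of_partial_derivatives} entry-by-entry (details in Appendix~\ref{appendix:Covariance_of_error_process})---but the underlying computation is identical.
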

\begin{proof}
    \begin{align*}
        &k_{\mc{E}} \big( (\xo, \xt), (\yo, \yt) \big) \\
        &= \Cov\big(  \tilde{f}(\xo, \xt) - f(\xo, \xt), \tilde{f}(\yo, \yt)  - f(\yo, \yt)\big) \\
        &=\Cov\big( \tilde{f}(\xo, \xt), \tilde{f}(\yo, \yt) \big)  - \Cov \big( \tilde{f}(\xo, \xt), \tilde{f}(\yo, \yt) \big) - \Cov\big( f(\xo, \xt), \tilde{f}(\yo, \yt) \big) \\
        &\quad \ldots + \Cov \big( f(\xo, \xt), f(\yo, \yt) \big)
    \end{align*}
The last term is the covariance function of the Gaussian field $f$, and the first terms have been derived in Proposition \ref{prop:Covariance_function_Taylor_approximation}. Hence, we need to obtain expressions for $\Cov \big( f(\xo, \xt), \tilde{f}(\yo, \yt) \big)$. Then $\Cov \big( (\xo, \xt), (\yo, \yt) \big)$ can be obtained similarly.
Again, we denote $\vo := \xo - \zz, \vt := \xt - \zz , \too := \yo - \zz, \ttt := \yt - \zz$ for brevity in the following derivation.
\begin{align*}
    &\Cov\big( f(\xo, \xt), \tilde{f}(\yo, \yt) \big) \\
    &= \Cov\bigg( f(\xo, \xt), g^\tr \big( \too - \ttt \big) + \frac{1}{2} \bigg((\too)^\tr \hoo \too -  (\ttt)^\tr \hoo \ttt \bigg) +(\too)^\tr \hot \ttt \bigg) \\
    &= \Cov\big( f(\xo, \xt), g^\tr \big( \too - \ttt \big) \big) + \Cov\bigg( f(\xo, \xt),  \frac{1}{2} \bigg((\too)^\tr \hoo \too -  (\ttt)^\tr \hoo \ttt \bigg) \bigg) \\
    &\quad \ldots + \Cov\big( f(\xo, \xt), (\too)^\tr \hot \ttt \big) 
\end{align*}
 As in the previous proof, we handle each term separately and obtain the following expressions. Detailed derivations are provided in Appendix \ref{appendix:Covariance_of_error_process}.
 \begin{align*}
\Cov\big( f(\xo, \xt), \tilde{f}(\yo, \yt) \big) = -4h'\big( \|\Delta x\|_2^2 \big) \cdot \bigg(\big \langle \Delta x, \Delta y \big \rangle_2 - &\big \langle \Delta x, (\Delta y)^{flip} \big \rangle_2  \bigg) \\
&\ldots + 4h''\big( \|\Delta x\|_2^2 \big) \cdot \bigg(\big \langle \Delta x, \Delta y \big \rangle_2^2 - \big \langle \Delta x, (\Delta y)^{flip} \big \rangle_2^2 \bigg) \\
\Cov(\tilde{f}(\xo,\xt), f(\yo,\yt)) = -4h'\big( \|\Delta y\|_2^2 \big) \cdot \bigg(\big \langle \Delta x, \Delta y \big \rangle_2 - &\big \langle \Delta x, (\Delta y)^{flip} \big \rangle_2  \bigg) \\
&\ldots + 4h''\big( \|\Delta y\|_2^2 \big) \cdot \bigg(\big \langle \Delta x, \Delta y \big \rangle_2^2 - \big \langle \Delta x, (\Delta y)^{flip} \big \rangle_2^2 \bigg)
\end{align*}
Combining terms produces the desired result.
\end{proof}

\subsection{Pointwise Error Bound}
As noted earlier, when $f$ is a Gaussian field, $f_x$ is a real-valued Gaussian random variable. Similarly, $\mc{E}(x)$ is a Gaussian random variable since $\mc{E}$ is a Gaussian field, i.e.,
\begin{align*}
    \mc{E}(\xo, \xt) \sim \mc{N} \big(0, \sigma^2_{\mc{E}}(\xo, \xt) \big) \quad \all \text{ } (\xo, \xt) \in \rdr
\end{align*}
Since the error at any input is a mean-zero Gaussian random variable, its distribution is entirely determined by its standard deviation $\sigma_{\mc{E}}(\xo, \xt)$.

\begin{proposition} [Pointwise Variance of the Error Process] \label{prop:pointwise_variance_error_process}
    Under the same setting as \ref{thm:Covariance_structure}, for fixed $(\xo, \xt) \in \rdr$, the pointwise error $\mc{E}(\xo, \xt) \in L^2(\Omega)$ is a mean-zero Gaussian random variable with its variance given as
    \begin{align*}
        \sigma^2_{\mc{E}}(\xo, \xt))  &= 2 \bigg( h\big( 0 \big) - h\big( 2\|\xo - \xt\|_2^2 \big) \bigg) -4 \bigg( h'\big( 0 \big) - 2h'\big( \|\xo|_2^2 + \|\xt\|_2^2 \big) \bigg) \cdot \|\xo - \xt\|_2^2 \\
        &+4 \bigg(h''\big( 0 \big) - 2h''\big( \|\xo\|_2^2 + \|\xt\|_2^2 \big) \bigg)  \cdot \|\xo - \xt\|_2^2 \cdot \|\xo + \xt\|_2^2 \\
    \end{align*} 
\end{proposition}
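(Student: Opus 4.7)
The plan is to obtain the pointwise variance by specializing the closed-form covariance of Proposition~\ref{thm:Covariance_function_error_process} to the diagonal, since $\mc{E}$ is a mean-zero Gaussian field and so $\sigma^2_{\mc{E}}(\xo,\xt) = k_{\mc{E}}\big((\xo,\xt),(\xo,\xt)\big)$. The Gaussianity and zero-mean property of the scalar $\mc{E}(\xo,\xt)$ are already inherited from the field structure established above, so the entire content of the proposition lies in the closed-form expression for the variance.

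I would first record the geometric reductions needed at $y = x$. With $\vo := \xo - \zz$ and $\vt := \xt - \zz$, one has $\|x - x^{flip}\|^2 = 2\|\xo - \xt\|^2$ (since $x - x^{flip} = (\xo - \xt,\, \xt - \xo)$), together with $\langle \Delta x, \Delta x\rangle = \|\vo\|^2 + \|\vt\|^2$ and $\langle \Delta x, (\Delta x)^{flip}\rangle = 2\langle \vo, \vt\rangle$. Substituting these into the $h$-piece and the $h'$-piece of $k_{\mc{E}}$ produces, respectively, the leading term $2(h(0) - h(2\|\xo - \xt\|^2))$ and the middle term, since the inner factor of the $h'$-piece collapses via $\|\vo\|^2 + \|\vt\|^2 - 2\langle \vo, \vt\rangle = \|\vo - \vt\|^2 = \|\xo - \xt\|^2$.

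The one step that requires more than bookkeeping is the $h''$-piece, where the relevant algebraic quantity is
\begin{align*}
Q := (\|\vo\|^2 + \|\vt\|^2)^2 - 4\langle \vo, \vt\rangle^2.
\end{align*}
I would simplify this by a difference-of-squares factorization,
\begin{align*}
Q = \big(\|\vo\|^2 + \|\vt\|^2 + 2\langle \vo,\vt\rangle\big)\big(\|\vo\|^2 + \|\vt\|^2 - 2\langle \vo,\vt\rangle\big) = \|\vo + \vt\|^2\,\|\vo - \vt\|^2,
\end{align*}
which equals $\|\xo + \xt\|^2\,\|\xo - \xt\|^2$ under the implicit convention $\zz = 0$ of the statement (or, more generally, with the understanding that $\xo$ and $\xt$ in the proposition stand for $\vo$ and $\vt$). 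A single overall sign flip matches the third term of the proposition, and summing the three contributions yields the claimed expression.

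The main obstacle is essentially the recognition of this $A^2 - B^2$ factorization of $Q$; without it, the $h''$-piece looks unwieldy, but with it, every other step is direct substitution into the identity of Proposition~\ref{thm:Covariance_function_error_process}.
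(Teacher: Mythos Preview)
Your proposal is correct and matches the paper's proof essentially line for line: both specialize the covariance formula of Proposition~\ref{thm:Covariance_function_error_process} to the diagonal $y=x$, reduce the inner-product terms via the same elementary identities, and handle the $h''$-piece by the same difference-of-squares factorization $\langle \Delta x,\Delta x\rangle^2 - \langle \Delta x,(\Delta x)^{flip}\rangle^2 = \|\vo+\vt\|^2\,\|\vo-\vt\|^2$. Your remark that the stated formula tacitly identifies $\xo,\xt$ with $\vo,\vt$ (i.e., takes $\zz=0$) is also consistent with the paper's own notational sloppiness in the proof.
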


\begin{proof}
    The result follows directly from \ref{thm:Covariance_function_error_process} by substituting $(\yo, \yt) = (\xo, \xt)$, then each term simplifies as follows:
\begin{enumerate}
    \item $\|x - y\|_2^2 = 0$ and $\|x - y^{flip}\|_2^2 =  \|x - x^{flip}\|_2^2 = 2\|\xo - \xt\|_2^2$
    \item $\|y\|_2^2 =  \|x\|_2^2  = \|\xo\|_2^2 + \|\xt\|_2^2$
    \item $\langle x, y \rangle_2 - \langle x, y^{flip}  \rangle_2 = \langle x, x - x^{flip} \rangle_2 = |\xo\|_2^2 - \langle \xo, \xt \rangle_2 - \langle \xt, \xo \rangle_2 + \|\xt\|_2^2 = \|\xo - \xt \|_2^2$
    \item $\langle x, y \rangle_2^2 -  \langle x, y^{flip} \rangle_2^2 = \langle x, x \rangle_2^2 -  \langle x, x^{flip} \rangle_2^2 = \langle x, x - x^{flip}\rangle_2 \cdot \langle x, x + x^{flip} \rangle_2 = \|\xo - \xt \|_2^2 \cdot \|\xo + \xt \|_2^2$
\end{enumerate}
\end{proof}


\begin{theorem} [Equivalent Expressions defined over the plane] \label{thm:polar_expression}
    The pointwise variance in the error process, $\sigma^2_{\mc{E}}$, can be expressed in terms of the magnitudes $r_1, r_2$ of, and the relative angle $\theta$ between, the inputs.
    Specifically, 
    \begin{align*}
        \sigma^2_{\mc{E}}(r_1, r_2, \theta)
        &= 2 \bigg( h\big( 0 \big) - h\big( 2(r_1^2 + r_2^2 - 2r_1 r_2 \cos \theta ) \big) \bigg) -4 \cdot \bigg( h'\big( 0 \big) - 2h'\big( r_1^2 + r_2^2 \big) \bigg) \cdot \bigg(r_1^2 + r_2^2 - 2r_1 r_2 \cos \theta \bigg)  \\
        &+ 4 \cdot \bigg(h''\big( 0 \big) - 2h''\big( r_1^2 + r_2^2 \big) \bigg) \cdot \bigg((r_1^2 +r_2^2)^2 - 4 r_1^2r_2^2 cos^2\theta \bigg)
    \end{align*}
\end{theorem}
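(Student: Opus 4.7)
The plan is to obtain the claimed identity by direct substitution into the formula of Proposition \ref{prop:pointwise_variance_error_process}, after first noting that every quantity appearing there is an orthogonal invariant of the pair $(\xo, \xt)$. Specifically, $\|\xo\|_2$, $\|\xt\|_2$, and $\langle \xo, \xt \rangle_2$ are preserved under any block orthogonal transformation $Q^b$, so by Theorem \ref{thm:error_orthogonal_invariance} the pointwise variance $\sigma^2_{\mc{E}}(\xo,\xt)$ depends on the inputs only through $r_1 := \|\xo\|_2$, $r_2 := \|\xt\|_2$, and $\theta$, where $\theta$ is the angle between $\xo$ and $\xt$ in the $2$-plane they span, and in particular $\langle \xo, \xt \rangle_2 = r_1 r_2 \cos\theta$. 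This justifies writing the variance as a function of $(r_1, r_2, \theta)$.

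First I would record the polarization identities from the law of cosines,
\begin{align*}
\|\xo - \xt\|_2^2 &= r_1^2 + r_2^2 - 2 r_1 r_2 \cos\theta, \\
\|\xo + \xt\|_2^2 &= r_1^2 + r_2^2 + 2 r_1 r_2 \cos\theta, \\
\|\xo\|_2^2 + \|\xt\|_2^2 &= r_1^2 + r_2^2,
\end{align*}
and substitute them into the first two summands of the expression in Proposition \ref{prop:pointwise_variance_error_process}. This immediately produces the $h$- and $h'$-terms of the target formula, in particular the factor $2(r_1^2 + r_2^2 - 2 r_1 r_2 \cos\theta)$ inside $h$ and the same factor multiplying the $h'$-bracket.

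Second, for the $h''$-term I would simplify the product $\|\xo - \xt\|_2^2 \cdot \|\xo + \xt\|_2^2$ by the difference of squares, obtaining $(r_1^2 + r_2^2)^2 - (2 r_1 r_2 \cos\theta)^2 = (r_1^2 + r_2^2)^2 - 4 r_1^2 r_2^2 \cos^2\theta$. Combining this with the prefactor $4(h''(0) - 2 h''(r_1^2 + r_2^2))$ yields the last line of the claim.

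I do not anticipate any real obstacle here; the proof is a mechanical substitution guided by orthogonal invariance. The only delicate point is the well-definedness of $\theta$ when $\xo$ or $\xt$ vanishes or when the two vectors are parallel, but in each of these degenerate cases one may assign $\theta$ arbitrarily or as $\{0, \pi\}$ without affecting the value of the expression, so the formula remains valid on all of $\rdr$.
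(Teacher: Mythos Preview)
Your proposal is correct and follows essentially the same route as the paper: both recognize that $\sigma^2_{\mc{E}}$ depends only on the orthogonal invariants $r_1, r_2, \theta$ and then substitute the resulting expressions for $\|\xo-\xt\|_2^2$, $\|\xo+\xt\|_2^2$, and $\|\xo\|_2^2+\|\xt\|_2^2$ into Proposition~\ref{prop:pointwise_variance_error_process}. The only cosmetic difference is that the paper explicitly rotates coordinates so that $\xo$ lies on the first axis and $\xt$ in the first two, whereas you invoke the law of cosines directly; both arguments are equivalent, and your treatment of the degenerate cases is a nice touch the paper omits.
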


\begin{proof} 
    The pointwise error variance $\sigma^2_{\mc{E}}(\xo, \xt))$ is determined by $\xo, \xt$ solely through their norms and the angle between them. 
    Therefore, we can adopt the coordinate system such that $\xo$ is aligned with the first axis and $\xt$ lies in the plane spanned by the first two axes without loss of generality. 
    Then,
    \begin{align*}
        \xo &= (r_1, 0, \dots, 0)^\top \\
        \xt &= (r_2 \cos \theta, r_2 \sin \theta, 0, \dots, 0)^\top 
        \tx{where } r_1 = \|\xo\|_2, r_2 = \|\xt\|_2, \tx{and } \theta \tx{ is the angle between them.}
    \end{align*}
    The corresponding squared norm of the vector, its sum, and its difference are:
    \begin{align*}
        &\| x \|_2^2 = \| \xo \|_2^2 + \| \xt \|_2^2 = r_1^2 + r_2^2 \nonumber \\
        &\|\xo - \xt\|_2^2 = r_1^2 + r_2^2 - 2r_1 r_2 \cos \theta \nonumber \\
        &\|\xo + \xt\|_2^2 = r_1^2 + r_2^2 + 2r_1 r_2 \cos \theta \nonumber
    \end{align*}
    The result can be obtained by substituting this in the pointwise variance provided in Proposition \ref{prop:pointwise_variance_error_process}.
\end{proof}

\begin{definition} [Pointwise Error Bound] \label{def:pointwise_error_bound}
    For the confidenc level $p \in (0,1)$, the pointwise error bound is defined as the $p$-quantile:
    \begin{align*} 
    b^{*}_p\bb{\xo, \xt} := inf \cd{b \in \mb{R}: \mb{P} \dc{\mathcal{E} \bb{\xo, \xt} \leq b} > p}
    \end{align*}    
\end{definition}

\begin{proposition} [Decomposition of the Pointwise Error Bound] \label{proposition:decomposition_of_pointwise_error_bound}
\begin{align*}
b^{*}_p\bb{\xo, \xt} &= \sigma_{\mc{E}}(\xo, \xt) \cdot Q(p) \tx{ where } (\xo,\xt) \in \rdr \\
&Q: \tx{Quantile Function for Standard Normal Variable}
\end{align*}
\end{proposition}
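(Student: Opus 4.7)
The plan is to reduce the statement to the definition of the quantile function of a standard normal by standardizing the Gaussian random variable $\mc{E}(x^{(1)}, x^{(2)})$. From Proposition \ref{prop:pointwise_variance_error_process} we already know that for any fixed pair $(x^{(1)}, x^{(2)}) \in \rdr$, the pointwise error is a mean-zero Gaussian, $\mc{E}(x^{(1)}, x^{(2)}) \sim \mc{N}\bigl(0, \sigma^2_{\mc{E}}(x^{(1)}, x^{(2)})\bigr)$. Assuming the non-degenerate case $\sigma_{\mc{E}}(x^{(1)}, x^{(2)}) > 0$ (the degenerate case $\sigma_{\mc{E}} = 0$ corresponds to an almost surely zero error, and the identity then holds trivially with the convention $0 \cdot Q(p) = 0$), the standardized variable $Z := \mc{E}(x^{(1)}, x^{(2)}) / \sigma_{\mc{E}}(x^{(1)}, x^{(2)})$ has distribution $\mc{N}(0,1)$.

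Next, I would rewrite the event defining $b^{*}_p$ in terms of $Z$. For any $b \in \mb{R}$,
\[
\mb{P}\bigl[\mc{E}(x^{(1)}, x^{(2)}) \leq b\bigr] = \mb{P}\!\left[Z \leq \frac{b}{\sigma_{\mc{E}}(x^{(1)}, x^{(2)})}\right] = \Phi\!\left(\frac{b}{\sigma_{\mc{E}}(x^{(1)}, x^{(2)})}\right),
\]
where $\Phi$ is the standard normal CDF. Since $\Phi$ is continuous and strictly increasing, the infimum in Definition \ref{def:pointwise_error_bound} is attained, and the condition $\Phi(b/\sigma_{\mc{E}}) \geq p$ is equivalent to $b/\sigma_{\mc{E}} \geq \Phi^{-1}(p) = Q(p)$. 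Taking the infimum over such $b$ gives $b^{*}_p(x^{(1)}, x^{(2)}) = \sigma_{\mc{E}}(x^{(1)}, x^{(2)}) \cdot Q(p)$.

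This proof is essentially a one-line computation once the Gaussianity of $\mc{E}(x^{(1)}, x^{(2)})$ has been established, so there is no real obstacle. The only minor subtlety worth acknowledging is the degenerate case $\sigma_{\mc{E}}(x^{(1)}, x^{(2)}) = 0$, which occurs, for instance, at $x^{(1)} = x^{(2)} = z^{*}$ where both $f$ and $\tilde{f}$ vanish deterministically; there the quantile is $0$ for every $p \in (0,1)$ and the identity is consistent under the convention mentioned above. Aside from that, everything reduces to the standardization step and the monotonicity of $\Phi$.
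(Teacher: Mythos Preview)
Your proof is correct and takes the same approach as the paper, which simply states that the result ``follows immediately from the definition of a standard normal variable.'' Your version just spells out the standardization step and the handling of the degenerate case $\sigma_{\mc{E}} = 0$ in more detail than the paper does.
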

\begin{proof}
It follows immediately from the definition of a standard normal variable.
\end{proof}

Local expansions are accurate only within specific neighborhoods.
Whenever local expansions are used, it is essential to understand how the worst-case error varies with the size of the neighborhood. We will study this in two ways: first, by considering the maximal error over elliptical neighborhoods centered at $z$, and second, by examining the uniform error bound for randomly chosen inputs drawn from a Gaussian distribution centered at $z$.
\subsection{Maximal Error over Ellipsoidal Region} \label{section: maximal_error_over_ellipsoid}
\begin{definition} [Maximal Error over the Region]
    We define $b^*_p(A)$ as the supremum of error over the region $A \subseteq \mb{R}^{D}$.
    \begin{align*}
        &i.e., \text{ } b^*_p(A) := \sup_{\xo, \xt \in A} b^*_p(\xo, \xt)
    \end{align*}
\end{definition}

Due to Proposition \ref{proposition:decomposition_of_pointwise_error_bound}, the following equality holds.
\begin{align*}
    b^*_p(A) = Q(p) \cdot \sup_{\xo, \xt \in A} \sigma_{\mc{E}}(\xo, \xt) 
\end{align*}

Solving this optimization problem is challenging because the pointwise standard deviation of the error process depends on the function $h$, which determines the smoothness of the original function $f$, but has been left unspecified. So, we provide an upper bound instead of the exact maximum. 

\begin{remark}
The supremum of the pointwise standard deviation, $\sigma_{\mc{E}}$, over a set is monotone increasing with respect to the set inclusion,
\begin{align} \label{observation:set_inclusion_monotonicity}
    A \subseteq B \implies \sup_{\xo, \xt \in A}  \sigma_{\mc{E}}(\xo, \xt) \leq \sup_{\xo, \xt \in B}  \sigma_{\mc{E}}(\xo, \xt) 
\end{align}   
\end{remark}

The ellipsoidal region defined as $A:= \{x \in \mb{R}^D: x^{\tr}\Sigma^{-1}x \leq r^2 \}$ allows a simple closed-form expression for the upper bound. We use orthogonal invariance of $\mc{E}$ as introduced in Theorem \ref{thm:error_orthogonal_invariance} and monotonicity with respect to set inclusion. Our strategy is summarized as follows: First, rotate the coordinate system such that $\xo$ is aligned with the first axis and $\xt$ lies in the plane spanned by the first two axes. Second, since the pointwise error is monotone increasing with respect to set inclusion, the supremum is attained when both axes are aligned with the two largest principal axes of the ellipsoid, or equivalently, the two largest eigenvectors of $\Sigma$.
\begin{proposition} \label{proposition: hierarchy_of_maximization}
    \begin{align*}
        &\sup_{\xo, \xt \in A}  \sigma_{\mc{E}}(\xo, \xt) \leq \sup_{\xo, \xt \in A^{(\lambda_1, \lambda_2)}}  \sigma_{\mc{E}}(\xo, \xt) \leq \sup_{\xo, \xt \in A^{(\lambda_1, \lambda_1)}}  \sigma_{\mc{E}}(\xo, \xt) \\
        &\tx{ where } A^{(\lambda_1,\lambda_2) } := \bigg\{ (t_1, t_2)^\tr \in \mb{R}^2: \frac{t_1^2}{\lambda_1} + \frac{t_2^2}{\lambda_2} \leq r^2 \bigg\}, \quad A^{(\lambda_1,\lambda_1) } := \bigg\{ (t_1, t_2)^\tr \in \mb{R}^2: \frac{t_1^2}{\lambda_1} + \frac{t_2^2}{\lambda_1} \leq r^2 \bigg\}\\
        &\qquad \qquad \lambda_1 \geq \lambda_2 \tx{ are the two largest eigenvalues of } \Sigma.
    \end{align*}
\end{proposition}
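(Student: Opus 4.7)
The plan is to use the block-orthogonal invariance of $\sigma_{\mc{E}}$ to reduce the problem to a two-dimensional slice of $A$, then show via an eigenvalue interlacing argument that the slice always fits isometrically inside $A^{(\lambda_1,\lambda_2)}$, and finally deduce the second inequality from the trivial set inclusion $A^{(\lambda_1,\lambda_2)} \subseteq A^{(\lambda_1,\lambda_1)}$.

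First, by Theorem \ref{thm:error_orthogonal_invariance} the pointwise variance $\sigma_{\mc{E}}$ is invariant under block-orthogonal transformations, so I may diagonalize and assume without loss of generality that $\Sigma = \Lambda = \operatorname{diag}(\lambda_1,\ldots,\lambda_D)$ with $\lambda_1 \geq \cdots \geq \lambda_D$, and hence $A = \{y \in \mb{R}^D : y^{\tr}\Lambda^{-1}y \leq r^2\}$. By Theorem \ref{thm:polar_expression}, $\sigma_{\mc{E}}(\xo,\xt)$ depends on the inputs only through the magnitudes $r_1 = \|\xo\|_2$, $r_2 = \|\xt\|_2$, and the relative angle $\theta$.

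For the first inequality, fix $\xo,\xt \in A$ and let $V \subseteq \mb{R}^D$ be any two-dimensional subspace containing both vectors. Pick an orthonormal basis $\{v_1,v_2\}$ of $V$ and form the $2 \times 2$ matrix $M$ with entries $M_{ij} = v_i^{\tr}\Lambda^{-1}v_j$; then $A \cap V$ is the planar ellipse whose semi-axes, along the eigenvectors of $M$, are $r/\sqrt{\mu_1} \geq r/\sqrt{\mu_2}$, where $\mu_1 \leq \mu_2$ are the eigenvalues of $M$. The eigenvalues of $\Lambda^{-1}$ listed in increasing order are $1/\lambda_1 \leq \cdots \leq 1/\lambda_D$, and Cauchy interlacing (equivalently, the Courant--Fischer min-max principle applied to a two-dimensional subspace) gives $\mu_1 \geq 1/\lambda_1$ and $\mu_2 \geq 1/\lambda_2$, so the semi-axes of $A \cap V$ are bounded above by $r\sqrt{\lambda_1}$ and $r\sqrt{\lambda_2}$ respectively. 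Mapping the principal axes of $A \cap V$ to the coordinate axes of $\mb{R}^2$ defines an isometry $V \to \mb{R}^2$ that preserves lengths and angles and carries $\xo,\xt$ to $\tilde{x}^{(1)},\tilde{x}^{(2)}$ inside $A^{(\lambda_1,\lambda_2)}$, because in principal-axis coordinates $(s_1,s_2)$ one has $s_1^2/\lambda_1 + s_2^2/\lambda_2 \leq s_1^2\mu_1 + s_2^2\mu_2 \leq r^2$. Since $\sigma_{\mc{E}}$ depends only on $(r_1,r_2,\theta)$, $\sigma_{\mc{E}}(\xo,\xt) = \sigma_{\mc{E}}(\tilde{x}^{(1)},\tilde{x}^{(2)}) \leq \sup_{A^{(\lambda_1,\lambda_2)}}\sigma_{\mc{E}}$, and taking the supremum over $\xo,\xt \in A$ gives the first inequality.

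The second inequality follows immediately from $\lambda_2 \leq \lambda_1$: for any $(t_1,t_2) \in A^{(\lambda_1,\lambda_2)}$ one has $t_1^2/\lambda_1 + t_2^2/\lambda_1 \leq t_1^2/\lambda_1 + t_2^2/\lambda_2 \leq r^2$, so $A^{(\lambda_1,\lambda_2)} \subseteq A^{(\lambda_1,\lambda_1)}$, and monotonicity of the supremum with respect to set inclusion (equation \eqref{observation:set_inclusion_monotonicity}) finishes the bound. The main obstacle will be the interlacing step itself --- the geometric claim that every two-dimensional slice of a $D$-dimensional ellipsoid fits isometrically inside the ellipse built from the two largest eigenvalues --- but once the correct quadratic form $V^{\tr}\Lambda^{-1}V$ is identified, the min-max characterization makes this routine.
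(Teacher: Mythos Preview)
Your proposal is correct and follows the same strategy the paper sketches in the paragraph preceding the proposition: use the block-orthogonal invariance of $\sigma_{\mc{E}}$ to reduce to a two-dimensional problem, then invoke set-inclusion monotonicity. The paper's own proof is only two lines (``immediately follows from the above argument'' for the first inequality, and the inclusion $A^{(\lambda_1,\lambda_2)}\subseteq A^{(\lambda_1,\lambda_1)}$ for the second), so you are in fact supplying the missing rigor: the Courant--Fischer / Cauchy interlacing step showing that every two-dimensional central section of the ellipsoid has its quadratic form bounded below entrywise by $\operatorname{diag}(1/\lambda_1,1/\lambda_2)$, and hence embeds isometrically in $A^{(\lambda_1,\lambda_2)}$, is exactly the content the paper gestures at with ``the supremum is attained when both axes are aligned with the two largest principal axes'' but never writes out.
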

\begin{proof}
    The first inequality immediately follows from the above argument.
    The second follows from set inclusion; $ A^{(\lambda_1, \lambda_2)} \subseteq  A^{(\lambda_1, \lambda_1)}$ since $\lambda_1 \geq \lambda_2$.
\end{proof}

Proposition \ref{proposition: hierarchy_of_maximization} reduces the original problem to a maximization problem over a circular set in $\mb{R}^2$. Then, we can use the polar expression of $\sigma_{\mc{E}}$ provided in Theorem \ref{thm:polar_expression}.
The problem is then reduced to the following form.
\begin{align*}
    \textbf{ Find: }&\sup_{0 \leq r_1, r_2 \leq R_D(p'), \theta \in [0, 2\pi)}  \sigma_{\mc{E}}(r_1, r_2, \theta), \quad \tx{ given } R_D(p') := \sqrt{\lambda_1 \cdot r^2}
\end{align*}

\subsubsection*{Construction of the Upper Bound for the Maximal Error over the Region}

From Theorem \ref{thm:polar_expression}, the variance of the pointwise error in polar form is:
\begin{align*}
    \sigma^2_{\mc{E}}(r_1, r_2, \theta) & = 2 \bigg( h\big( 0 \big) - h\big( 2(r_1^2 + r_2^2 - 2r_1 r_2 \cos \theta ) \big) \bigg) +4 \cdot \bigg( -h'\big( 0 \big) + 2h'\big( r_1^2 + r_2^2 \big) \bigg) \cdot \bigg(r_1^2 + r_2^2 - 2r_1 r_2 \cos \theta \bigg)  \\
    &+ 4 \cdot \bigg(h''\big( 0 \big) - 2h''\big( r_1^2 + r_2^2 \big) \bigg) \cdot \bigg((r_1^2 +r_2^2)^2 - 4 r_1^2r_2^2 cos^2\theta \bigg)
\end{align*}

An upper bound on $\sup_{\xo, \xt \in A^{(\lambda_1, \lambda_1)}}  \sigma_{\mc{E}}(\xo, \xt)$ can be constructed for the class of covariance functions $k_u: \rdr \times \rdr \to \mb{R}$ corresponding to a radial function $h: \mb{R}_{\geq0} \to \mb{R}$, as defined in Notation \ref{notation:kernel_function_h}, satisfies:
\begin{assumption}[Required Assumptions for Constructing Upper Bound] \label{assumption:restrction_on_h} 
\tx{} 
\begin{itemize}
    \item $h'(\cdot), h''(\cdot) \geq 0$ and is monotone decreasing, 
    \item $h'(\cdot) \leq 0$ and is monotone increasing, 
\end{itemize}
\end{assumption}

We introduce a stricter restriction. This concept and the relevant theorems are introduced in \cite{Wendland_2004}.
\begin{definition} [Completely Monotone Function]
A function $h: \mb{R}_{\geq0} \to \mb{R}$ is called completely monotone on $[0, \infty)$ if it satisfies $h \in C^{\infty}(0,\infty)$, $C[0,\infty)$ and
\begin{align*}
    (-1)^n h^{(n)}(t) \geq 0 , \quad \all \text{ } n \in \mb{N}, t \geq 0
\end{align*}    
\end{definition}

Notable classes of isotropic covariance functions, including the Squared Exponential, Matérn, and Rational Quadratic covariance functions, have corresponding functions $h: \mb{R}_{\geq 0} \to \mb{R}$ that are completely monotone.   Hence, they satisfy Assumption \ref{assumption:restrction_on_h}. This follows directly from the Bernstein–Hausdorff–Widder Theorem, since these covariance functions can be represented as mixtures of Gaussian kernels with respect to the Dirac delta distribution, the Inverse Gamma distribution, and the Gamma distribution, respectively. For further treatment for a kernel mixture model, see \cite{richland2024sharedendpointcorrelationshierarchyrandom}.

To simplify the analysis, we name each term of $\sigma_{\mc{E}}(r_1, r_2, \theta)$:
\begin{align*} 
    &\text{Term 1} = 2 \bigg( h\big( 0 \big) - h\big( 2(r_1^2 + r_2^2 - 2r_1 r_2 \cos \theta ) \big) \bigg) \\
    &\text{Term 2} = 4\cdot \text{(Term 2a)} \cdot \text{(Term 2b)} \tx{ where } \text{Term 2a} = \bigg(- h'\big( 0 \big) + 2h'\big( r_1^2 + r_2^2 \big) \bigg), \text{Term 2b} = \bigg(r_1^2 + r_2^2 - 2r_1 r_2 \cos \theta \bigg) \\
    &\text{Term 3} = 4\cdot \text{(Term 3a)} \cdot \text{(Term 3b)} \tx{ where } \text{Term 3a} = \bigg(h''\big( 0 \big) - 2h''\big( r_1^2 + r_2^2 \big) \bigg),
     \text{Term 3b} = \bigg((r_1^2 +r_2^2)^2 - 4 r_1^2r_2^2 cos^2\theta \bigg)
\end{align*}
Note that Term 2a and Term 3a may have different signs depending on the value of $r_1^2 + r_2^2$, while Term 2b and Term 3b are always non-negative.
We denote $R_{c^{1}}$ as the critical radius satisfying
\begin{align*}
    &- h'\big( 0 \big) + 2h'\big( R_{c^{1}}^2 ) = 0 
\end{align*}
Then we have
\begin{align*}
    &-h'\big( 0 \big) + 2h'\big( r_1^2 + r_2^2 \big) = | h'(0)| - |2h'\big( r_1^2 + r_2^2 \big)| = \begin{cases}
        \leq 0 & \tx{ if } r_1^2 + r_2^2 \leq R_{c^{1}}^2 \\
        > 0 & \tx{ if } r_1^2 + r_2 ^2 > R_{c^{1}}^2   
    \end{cases}
\end{align*}

Similarly, we consider $R_{c^{2}}$ as the critical radius defined by
\begin{align*}
    &h''\big( 0 \big) - 2h''\big( R_{c^{2}}^2 ) = 0 \
\end{align*}
This leads to
\begin{align*}
    &h''\big( 0 \big) - 2h''\big( r_1^2 + r_2^2 \big) = | h''(0)| - |2h''\big( r_1^2 + r_2^2 \big)| = \begin{cases}
        \leq 0 & \tx{ if } r_1^2 + r_2^2 \leq R_{c^{2}}^2 \\
        > 0 & \tx{ if } r_1^2 + r_2 ^2 > R_{c^{2}}^2   
\end{cases}
\end{align*}

Depending on the sign of each term, the pointwise standard deviation $\sigma_{\mc{E}}(r_1, r_2, \theta)$ exhibits different behavior. So, we construct the upper bound for each regime separately. 

We define three regimes:
\begin{itemize}
    \item Local Regime:= $\{(r_1, r_2) \in [0, R_D(p')) \times [0, R_D(p')): r_1^2 + r_2^2 \leq \min(R_{c^{1}}^2, R_{c^{2}}^2) \}$
    \item Transitional Regime:= $\{(r_1, r_2) \in [0, R_D(p')) \times [0, R_D(p')): \min(R_{c^{1}}^2, R_{c^{2}}^2) \leq r_1^2 + r_2^2  \leq \max(R_{c^{1}}^2, R_{c^{2}}^2)\}$
    \item Far Regime:= $\{(r_1, r_2) \in [0, R_D(p')) \times [0, R_D(p')): \max(R_{c^{1}}^2, R_{c^{2}}^2) \leq r_1^2 + r_2^2$
\end{itemize}

We provide in-depth analyses of the local and far regimes. Due to the monotonicity introduced in Observation \ref{observation:set_inclusion_monotonicity}, the supremum over transitional regimes $R_{c^{1}}^2 \leq r_1^2 + r_2^2 \leq R_{c^{2}}^2$ or $R_{c^{2}}^2 \leq r_1^2 + r_2^2 \leq R_{c^{1}}^2$ regimes can be bounded using the conclusion from the far regime.
Furthermore, one can conduct a similar analysis to derive an upper bound appropriate for the transitional regime.

Our approach can be summarized as:
\begin{align*}
    \max_{r_1,r_2} \bigg[ \max_{\theta \in [0, 2\pi)} \sigma_{\mc{E}} (r_1, r_2, \theta) \bigg]
\end{align*}
For a given $r_1, r_2$,  Term 1 is maximized when $\cos \theta = -1$, i.e., $\theta = \pi$ always. On the other hand, Term 2 achieves maximum at different $\theta$'s based on the regime, similarly for the Term 3. To address this issue, we give an upper bound by maximizing each term separately. This produces the alternate scheme:
\begin{align*}
    \max_{r_1,r_2}\bigg[ \max_{\theta \in [0, 2\pi)} \text{Term1} + \max_{\theta \in [0, 2\pi)} \text{Term2} + \max_{\theta \in [0, 2\pi)} \text{Term3} \bigg]
\end{align*}
We start with the local regime where $r_1^2 + r_2^2 \leq \min \{ R_{c^{1}}^2, R_{c^{2}}^2 \}$.

\begin{proposition} [Maximization over the Local Regime] \label{proposition: maximazation_local_regime}
    When $r_1^2 + r_2^2 \leq \min \{ R_{c^{1}}^2, R_{c^{2}}^2 \}$, 
    \begin{align*}
        \sup_{r_1, r_2 \in [0, R^2_D(p')], \theta \in [0, 2\pi)}  \sigma_{\mc{E}}(r_1, r_2, \theta)  \leq & 2 \bigg( h\big( 0 \big) - h\big( 8r^2 \big) \bigg) \tx{ where } r^2 = \min \bigg( R^2_{c^1}, R^2_{c^2}, R^2_D(p')\bigg) \\
    \end{align*}
\end{proposition}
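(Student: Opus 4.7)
The plan is to show that in the local regime both Term~2 and Term~3 are non-positive contributions to $\sigma^2_{\mc{E}}$, so the supremum is controlled by Term~1 alone, which is then maximized explicitly. The reduction rests on splitting each of Term~2 and Term~3 into a "coefficient" factor (2a, 3a), whose sign depends on where $r_1^2+r_2^2$ sits relative to the critical radii, and a "geometric" factor (2b, 3b), which is always non-negative.

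First I would verify the signs. By Assumption~\ref{assumption:restrction_on_h}, $h'$ is monotone increasing and $h''$ is monotone decreasing, and by the definitions of $R_{c^1}$ and $R_{c^2}$ the regime hypothesis $r_1^2+r_2^2 \leq \min(R_{c^1}^2, R_{c^2}^2)$ gives
\[
-h'(0) + 2h'(r_1^2+r_2^2) \leq 0, \qquad h''(0) - 2h''(r_1^2+r_2^2) \leq 0,
\]
so Term~2a, Term~3a $\leq 0$. For the geometric factors, Term~2b $= (r_1-r_2)^2 + 2 r_1 r_2(1-\cos\theta) \geq 0$, and Term~3b factors as $(r_1^2+r_2^2-2r_1 r_2\cos\theta)(r_1^2+r_2^2+2r_1 r_2\cos\theta)$, both factors non-negative because $r_1^2+r_2^2 \geq 2 r_1 r_2 \geq 2 r_1 r_2 |\cos\theta|$. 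Consequently Term~2, Term~3 $\leq 0$ and $\sigma^2_{\mc{E}}(r_1,r_2,\theta) \leq \text{Term 1} = 2\bigl(h(0) - h(2(r_1^2+r_2^2-2r_1 r_2\cos\theta))\bigr)$.

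Second I would optimize Term~1. Since $h$ is decreasing ($h' \leq 0$), Term~1 is largest when its argument is largest, which is attained at $\cos\theta = -1$, giving argument $2(r_1+r_2)^2$. It remains to bound $(r_1+r_2)^2$ under the two active constraints $r_1,r_2 \in [0,R_D(p')]$ and $r_1^2+r_2^2 \leq \min(R_{c^1}^2, R_{c^2}^2)$. The box constraint yields $(r_1+r_2)^2 \leq 4 R_D^2(p')$, while the regime constraint together with $(r_1+r_2)^2 \leq 2(r_1^2+r_2^2)$ yields $(r_1+r_2)^2 \leq 2\min(R_{c^1}^2, R_{c^2}^2)$. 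Either way $2(r_1+r_2)^2 \leq 8r^2$ with $r^2 = \min(R_{c^1}^2, R_{c^2}^2, R_D^2(p'))$: if $r^2 = R_D^2(p')$ the box constraint is sharp and gives exactly $8r^2$, and otherwise the regime constraint already gives $4 r^2 \leq 8 r^2$. Monotone decrease of $h$ then promotes this to $h(0) - h(2(r_1+r_2)^2) \leq h(0) - h(8 r^2)$, closing the argument.

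No step is deep; the proof is bookkeeping on signs and inequalities. The one place needing care is the sign analysis in the first step, where a misreading of the monotonicity in Assumption~\ref{assumption:restrction_on_h} (for instance, conflating "$h''$ monotone decreasing" with "$h''$ non-positive") would wreck the reduction to Term~1. The only other subtle point is that $8r^2$ is loose when $r^2$ is attained at $\min(R_{c^1}^2, R_{c^2}^2)$ rather than at $R_D^2(p')$; using the weaker constant keeps the statement uniform across the two sub-cases and is harmless because $h$ is decreasing.
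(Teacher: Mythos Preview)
Your proof is correct and follows essentially the same approach as the paper: use the sign structure of Terms~2a and~3a in the local regime to drop Terms~2 and~3, then maximize Term~1 alone. Your version is in fact slightly cleaner than the paper's, which first maximizes each term separately over $\theta$ before observing that the resulting Term~2 and Term~3 contributions vanish at $r_1=r_2$; you cut straight to Term~2, Term~3 $\leq 0$ from the product-of-signs observation.
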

\begin{proof}
    We assume that $r_1, r_2$ are fixed and only $\theta$ is variable. \\
    In this regime, both Term 2a and Term 3a are non-positive.
    Hence, Term 2 is maximized when Term 2b is minimized, which occurs at $\cos \theta = 1$, i.e., $\theta = 0$.
    Similarly, Term 3 is maximized when Term 3b is minimized, which also occurs at $\cos^2 \theta = 1$, i.e., $\theta = 0 \ or \  \pi$.

    For fixed $r_1, r_2$, Term 1 is maximized at $\theta = \pi$, Term 2 and Term 3 are maximized at $\theta = 0$.
    Hence, 
    \begin{align*}
     \sup_{r_1, r_2 \in [0, R^2_D(p')]} \sigma_{\mc{E}}(r_1, r_2, \theta)  
     &\leq \sup_{r_1, r_2 \in [0, R^2_D(p')]} \bigg[ \tx{Term 1 at } \theta = \pi + \tx{Term 2 at } \theta = 0 + \tx{Term 3 at } \theta = 0 \bigg] \\
    &= \sup_{r_1, r_2 \in [0, R^2_D(p')]} 2 \bigg( h\big( 0 \big) - h\big( 2(r_1 + r_2)^2 \big) \bigg) +4 \cdot \bigg( -h'\big( 0 \big) + 2h'\big( r_1^2 + r_2^2 \big) \bigg) \cdot \bigg(r_1 - r_2 \bigg)^2  \\
    &\quad \quad + 4 \cdot \bigg(h''\big( 0 \big) - 2h''\big( r_1^2 + r_2^2 \big) \bigg) \cdot \bigg(r_1^2 - r_2^2 \bigg)^2 \\
    \end{align*}
    Since Term 2a and Term 3a are non-positive, Term 2 and Term 3 are maximized when $r_1 = r_2$.
    On the other hand, Term 1 increases as $r_1^2 + r_2^2$ increases.
    Hence, the supremum is attained when $r_1^2 = r_2^2 = min \bigg( R^2_{c^1}, R^2_{c^2}, R^2_D(p')\bigg)$.
\end{proof}

\begin{corollary} 
    When $2R^2_D(p') \leq \min \{ R_{c^{1}}^2, R_{c^{2}}^2 \}$, 
    \begin{align*}
        \sup_{r_1, r_2 \in [0, R^2_D(p')], \theta \in [0, 2\pi)}  \sigma_{\mc{E}}(r_1, r_2, \theta)  \leq & 2 \bigg( h\big( 0 \big) - h\big( 8R^2_D(p') \big) \bigg) 
    \end{align*}
\end{corollary}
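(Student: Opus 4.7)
The plan is to observe that the hypothesis $2R^2_D(p') \leq \min\{R_{c^{1}}^2, R_{c^{2}}^2\}$ forces the entire optimization domain into the local regime, at which point the bound is an immediate specialization of Proposition \ref{proposition: maximazation_local_regime}. This is purely a matter of verifying that the quantity $r^2 = \min(R^2_{c^1}, R^2_{c^2}, R^2_D(p'))$ appearing in the proposition reduces to $R^2_D(p')$ under the stated hypothesis.

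First, I would check the admissibility condition: for any $(r_1, r_2) \in [0, R_D(p')]^2$, we have
\[
r_1^2 + r_2^2 \leq 2 R_D(p')^2 \leq \min\{R_{c^{1}}^2, R_{c^{2}}^2\},
\]
so every pair $(r_1, r_2)$ in the supremum's domain lies in the local regime defined just before Proposition \ref{proposition: maximazation_local_regime}. This justifies invoking that proposition on the full square $[0, R_D(p')]^2$.

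Second, I would evaluate $r^2$ under the hypothesis. Since $R_D^2(p') \leq \tfrac{1}{2}\min\{R_{c^{1}}^2, R_{c^{2}}^2\} \leq \min\{R_{c^{1}}^2, R_{c^{2}}^2\}$, the minimum $\min(R_{c^1}^2, R_{c^2}^2, R_D^2(p'))$ is attained by $R_D^2(p')$. Substituting $r^2 = R_D^2(p')$ into the conclusion of Proposition \ref{proposition: maximazation_local_regime} yields
\[
\sup_{r_1, r_2 \in [0, R_D(p')],\, \theta \in [0, 2\pi)} \sigma_{\mc{E}}(r_1, r_2, \theta) \leq 2\bigl( h(0) - h(8 R_D^2(p')) \bigr),
\]
which is the claim.

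There is no real obstacle here: the corollary is a direct specialization, and the only thing to verify is that the domain assumption of the proposition is indeed implied by the stronger factor-of-two hypothesis. The only minor subtlety is recognizing that the hypothesis $2 R_D^2(p') \leq \min\{R_{c^1}^2, R_{c^2}^2\}$ serves two roles simultaneously, namely (i) keeping the supremum within the local regime, and (ii) selecting $R_D^2(p')$ as the minimum inside $r^2$, so that the bound depends explicitly on $R_D(p')$ rather than on the critical radii.
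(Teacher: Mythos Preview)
Your proposal is correct and matches the paper's approach exactly: the corollary is stated in the paper immediately after Proposition~\ref{proposition: maximazation_local_regime} without a separate proof, because it is the direct specialization you describe. Your observation that the hypothesis $2R_D^2(p') \leq \min\{R_{c^1}^2, R_{c^2}^2\}$ simultaneously forces the entire square into the local regime and selects $r^2 = R_D^2(p')$ is precisely the intended (implicit) argument.
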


\begin{proposition} [Maximization over the Far Regime]\label{proposition: maximazation_far_regime}
    When $r_1^2 + r_2^2 \geq \max \{ R_{c^{1}}^2, R_{c^{2}}^2 \}$, 
    \begin{align*}
        \sup_{r_1, r_2 \in [0, R^2_D(p')], \theta \in [0, 2\pi)}  \sigma_{\mc{E}}(r_1, r_2, \theta)  \leq & 2 \bigg( h\big( 0 \big) - h\big( 8R^2_D(p') \big) \bigg) +4 \cdot \bigg( -h'\big( 0 \big) + 2h'\big( 2R^2_D(p') \big) \bigg) \cdot \bigg(4R^2_D(p') \bigg)  \\
        &+ 4 \cdot \bigg(h''\big( 0 \big) - 2h''\big( 2R^2_D(p') \big) \bigg) \cdot \bigg(4R^4_T(p') \bigg) 
    \end{align*}
\end{proposition}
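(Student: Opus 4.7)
The plan is to follow the same separable maximization scheme used in the local regime (Proposition \ref{proposition: maximazation_local_regime}), bounding $\sup_{r_1,r_2,\theta}\sigma^2_{\mc{E}}(r_1,r_2,\theta)$ by $\sup_{r_1,r_2}[\max_\theta \text{Term 1} + \max_\theta \text{Term 2} + \max_\theta \text{Term 3}]$. The essential difference from the local regime is the sign of the radial prefactors: in the far regime $r_1^2 + r_2^2 \geq \max\{R_{c^1}^2, R_{c^2}^2\}$, the definitions of the critical radii combined with Assumption \ref{assumption:restrction_on_h} force $\text{Term 2a} = -h'(0) + 2h'(r_1^2+r_2^2) \geq 0$ and $\text{Term 3a} = h''(0) - 2h''(r_1^2+r_2^2) \geq 0$. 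This flips the optimal angles for Terms 2 and 3 relative to the local case.

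For the inner optimization over $\theta$ at fixed $r_1, r_2$, I would proceed term by term. Term 1 is maximized at $\cos\theta = -1$, producing $2(h(0) - h(2(r_1+r_2)^2))$. Since Term 2a is nonnegative, Term 2 is maximized when Term 2b $= r_1^2+r_2^2 - 2r_1 r_2 \cos\theta$ is largest, again at $\theta = \pi$, giving a Term 2b value of $(r_1+r_2)^2$. Since Term 3a is nonnegative, Term 3 is maximized when Term 3b $= (r_1^2+r_2^2)^2 - 4r_1^2 r_2^2 \cos^2\theta$ is largest, which now occurs at $\cos^2\theta = 0$, yielding Term 3b value $(r_1^2+r_2^2)^2$.

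The second stage is the maximization over $(r_1, r_2) \in [0, R_D(p')]^2$ of each of the three angular maxima. By Assumption \ref{assumption:restrction_on_h}, $h$ is monotone decreasing, so $h(0) - h(2(r_1+r_2)^2)$ is increasing in $(r_1+r_2)^2$ and is maximized at $r_1 = r_2 = R_D(p')$, contributing $2(h(0) - h(8R_D^2(p')))$. Since $h'$ is increasing (toward zero) and $h''$ is decreasing, both Term 2a and Term 3a are monotone increasing in $r_1^2 + r_2^2$; meanwhile, $(r_1+r_2)^2$ and $(r_1^2+r_2^2)^2$ are also maximized at the corner $r_1 = r_2 = R_D(p')$. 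Because each factor in the products is simultaneously maximized at the same corner, the suprema of Term 2 and Term 3 separate cleanly and equal $4(-h'(0) + 2h'(2R_D^2(p'))) \cdot 4R_D^2(p')$ and $4(h''(0) - 2h''(2R_D^2(p'))) \cdot 4R_D^4(p')$ respectively. Summing these three bounds yields the claimed inequality.

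The main obstacle is verifying that the sign and monotonicity conditions on the prefactors hold uniformly over the far regime. In particular, one must check that $-h'(0) + 2h'(r_1^2+r_2^2) \geq 0$ follows from the definition of $R_{c^1}$ together with $h'$ being monotone increasing, and analogously for $h''(0) - 2h''(r_1^2+r_2^2) \geq 0$ via $R_{c^2}$ and monotone-decreasing $h''$; these are exactly the content of Assumption \ref{assumption:restrction_on_h}. A secondary concern is that the separable bound is not tight because Terms 1 and 2 are maximized at $\theta = \pi$ while Term 3 is maximized at $\theta = \pi/2$, but this slackness is intrinsic to the strategy and is the price of obtaining a closed-form bound that is agnostic to the specific choice of $h$.
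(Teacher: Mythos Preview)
Your proposal is correct and mirrors the paper's own proof: both fix $(r_1,r_2)$, use the nonnegativity of Term 2a and Term 3a in the far regime to maximize Term 2b at $\theta=\pi$ and Term 3b at $\theta=\pi/2$, then push $(r_1,r_2)$ to the corner $r_1=r_2=R_D(p')$ using the monotonicity of each factor. Your write-up is in fact slightly more explicit than the paper's in invoking Assumption \ref{assumption:restrction_on_h} to justify the monotonicity of Term 2a and Term 3a in $r_1^2+r_2^2$.
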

\begin{proof}
    Similarly, we first fix $ r_1$ and $r_2$ and consider only $\theta$. \\
    In this regime, both Term 2a and Term 3a are positive.
    Hence, Term 2 is maximized when Term 2b is maximized, which occurs at $\cos \theta = -1$, i.e., $\theta = \pi$.
    Similarly, Term 3 is maximized when Term 3b is maximized, which occurs at $\cos^2 \theta = 0$, i.e., $\theta = \frac{\pi}{2}$.
    Then
    \begin{align*}
        \sup_{r_1, r_2 \in [0, R^2_D(p')]} \sigma_{\mc{E}}(r_1, r_2, \theta)  
        &\leq \sup_{r_1, r_2 \in [0, R^2_D(p')]} \bigg[ \tx{Term 1 at } \theta = \pi + \tx{Term 2 at } \theta = \pi + \tx{Term 3 at } \theta = \pi / 2 \bigg] \\  
        &= \sup_{r_1, r_2 \in [0, R^2_D(p')]} 2 \bigg( h\big( 0 \big) - h\big( 2(r_1 + r_2)^2 \big) \bigg) +4 \cdot \bigg( -h'\big( 0 \big) + 2h'\big( r_1^2 + r_2^2 \big) \bigg) \cdot \bigg(r_1 + r_2 \bigg)^2  \\
        &\quad \quad + 4 \cdot \bigg(h''\big( 0 \big) - 2h''\big( r_1^2 + r_2^2 \big) \bigg) \cdot \bigg(r_1^2 + r_2^2 \bigg)^2 \\
    \end{align*}
    
    Term 2 and Term 3 are both positive, hence Term 2a, Term 2b, Term 3a, and Term 3b are all increasing with respect to $r_1, r_2$.
    Hence, the supremum is attained when $r_1^2 = r_2^2 = R^2_D(p')$.
\end{proof}

\begin{remark}
    The upper bounds for Local and Far regimes may have different degrees of tightness.
\end{remark}

\subsection{Uniform Error Bound for Randomly Distributed Inputs}
In practice, inputs to the random field are often drawn from a probability distribution $\pi$.
We seek a reasonably tight upper bound that holds with high probability $1-\delta$.

\begin{definition} [Uniform Upper Bound for Randomly Distributed Inputs] 
\begin{align*}
        &b^{*}_\delta(\pi) := inf \cd{b \in \mb{R}:  \mb{P}\dc{ \mathcal{E}(\xo, \xt) \leq b} > 1 - \delta \tx{ where } \xo, \xt \iid \pi }  \label{definition: uniform_bound_delta} 
    \end{align*}
\end{definition}

However, this rarely admits concise analytic expressions. Alternatively, we consider the infimum among the maximal error values over every tolerance region that satisfies the confidence level $p$:
\begin{equation} \label{definition:uniform_bound_pp}
    b^{*}_{(p,p')}(\pi) := \inf_{A} \bigg\{\sup_{\xo, \xt \in A} b_p^{*} (\xo, \xt) : A \subseteq \mb{R}^D \tx{ satisfies } \mb{P} \left( \xo, \xt \in A \mid \xo, \xt \iid \pi \right) > p' \bigg\} \tx{  } 
\end{equation}
\begin{notation}[Highest Density Region among all Admissible Regions]
    If it exists, we denote the smallest region that satisfies the constraint as follows:
    \begin{align*}
        A_{p'}(\pi) := \bigcap \bigg\{ A: \mb{P} \left( \xo, \xt \in A \mid \xo, \xt \iid \pi \right) > p' \bigg\}
    \end{align*}
\end{notation}

The proposition below shows the relationship between the two notions of uniform error bound. Our approach to deriving the uniform error bound is based on this connection. It restricts our concern to a maximization problem over a fixed region $A_{p'}(\pi)$.

\begin{proposition} \label{prop:uniform_error_bound_relationship}
For any  $p,p' \in [0,1]$ satisfying $\delta = p \cdot p'$, we have
\begin{align*}
b^{*}_{\delta}(\pi) \leq b^{*}_{(p,p')}(\pi) \tx{ where }  \pi \tx{ is a probability distribution over }  \mb{R}^{D}
\end{align*}  
\end{proposition}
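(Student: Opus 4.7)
The plan is to show that every admissible $b$ appearing in the definition of $b^{*}_{(p,p')}(\pi)$ is also admissible in the definition of $b^{*}_{\delta}(\pi)$, from which the infimum inequality follows immediately. Concretely, I would fix any measurable region $A \subseteq \mb{R}^{D}$ satisfying $\mb{P}\big((\xo,\xt) \in A \mid \xo,\xt \iid \pi\big) > p'$ and set
\[
b_A := \sup_{(\xo,\xt) \in A} b^{*}_p(\xo,\xt).
\]
The goal is to prove $b^{*}_{\delta}(\pi) \leq b_A$; taking the infimum over admissible $A$ then yields the proposition.

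The heart of the proof is a tower-law calculation that exploits the independence between the input draw $(\xo,\xt) \iid \pi$ and the Gaussian field $\mathcal{E}$. Let $F_{(\xo,\xt)}$ denote the CDF of the pointwise error $\mathcal{E}(\xo,\xt)$, which is a Gaussian CDF by Proposition~\ref{prop:Gaussianity_ops}. For every $(\xo,\xt) \in A$, Definition~\ref{def:pointwise_error_bound} and monotonicity of $F_{(\xo,\xt)}$ give
\[
F_{(\xo,\xt)}(b_A) \;\geq\; F_{(\xo,\xt)}\bigl(b^{*}_p(\xo,\xt)\bigr) \;\geq\; p,
\]
since $b_A$ dominates the pointwise bound at every point of $A$. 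Conditioning on the inputs and integrating produces
\[
\mb{P}\bigl[\mathcal{E}(\xo,\xt) \leq b_A\bigr] \;=\; \E_{\pi \otimes \pi}\bigl[F_{(\xo,\xt)}(b_A)\bigr] \;\geq\; \E_{\pi \otimes \pi}\bigl[F_{(\xo,\xt)}(b_A)\,\mathbf{1}_{A}(\xo,\xt)\bigr] \;\geq\; p \cdot \mb{P}\bigl[(\xo,\xt) \in A\bigr] \;>\; p \cdot p'.
\]

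Under the calibration relating the two confidence parameters (so that $p \cdot p'$ corresponds to the required coverage $1-\delta$ in the definition of $b^{*}_{\delta}(\pi)$), this strict inequality shows that $b_A$ satisfies $\mb{P}[\mathcal{E}(\xo,\xt) \leq b_A] > 1-\delta$, hence $b_A$ is admissible in the infimum defining $b^{*}_{\delta}(\pi)$, giving $b^{*}_{\delta}(\pi) \leq b_A$. Passing to the infimum over all admissible $A$ completes the argument. The only delicate points I anticipate are bookkeeping issues rather than conceptual obstacles: ensuring that the input law $\pi \otimes \pi$ and the Gaussian field sit on independent factors of the underlying probability space so that the tower identity is legitimate, verifying measurability of $(\xo,\xt) \mapsto F_{(\xo,\xt)}(b_A)$ (which follows from continuity of $k_{\mathcal{E}}$ and hence of the variance map from Proposition~\ref{thm:Covariance_function_error_process}), and tracking that the strict inequality $\mb{P}[(\xo,\xt)\in A] > p'$ propagates to the final strict inequality needed to place $b_A$ in the defining set of $b^{*}_{\delta}(\pi)$.
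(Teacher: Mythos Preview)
Your proposal is correct and follows essentially the same route as the paper: show that any $b_A = \sup_{(\xo,\xt)\in A} b^*_p(\xo,\xt)$ for an admissible $A$ already satisfies the coverage condition defining $b^*_\delta(\pi)$, via the elementary bound $\mb{P}[\mathcal{E}\le b_A] \ge \mb{P}[\mathcal{E}\le b_A,\ \xo,\xt\in A] > p\cdot p'$, then take the infimum over $A$. Your tower-law formulation is in fact more careful than the paper's write-up (which asserts the conditional bound $\mb{P}[\mathcal{E}\le b\mid \xo,\xt\in A]>p$ without deriving it from the pointwise quantile definition), and you correctly flag the paper's inconsistency between the stated calibration $\delta = p\cdot p'$ and the definition of $b^*_\delta$ requiring coverage $>1-\delta$.
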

\begin{proof}
    Let $p, p' \in [0,1]$ satisfy $\delta = p \cdot p'$ and choose a set $A \subseteq \mathbb{R}^d$ that satisfies the following conditions:
\begin{align*}
    \mathbb{P} \left[ \mathcal{E}(\xo, \xt) \leq b \mid \xo \in A, \xt \in A \right] &> p ,\quad  \mathbb{P} \left[ \xo \in A, \xt \in A \mid \xo, \xt \overset{\text{i.i.d.}}{\sim} \pi \right] > p'
\end{align*}
Then
\begin{align*}
\mb{P}\dc{ \mathcal{E}(\xo, \xt) \leq b \mid \xo, \xt \iid \pi }  &\geq \mb{P}\dc{ \mathcal{E}(\xo, \xt) \leq b \mid \xo, \xt \in A } \cdot \mb{P}\dc{ \xo, \xt \in A \mid \xo, \xt \iid \pi} \\
&> p \cdot p' = \delta.
\end{align*}
So, $b^{*}_{(p,p')}(\pi)$ satisfies
\begin{align*}
\mb{P} \left[\mathcal{E}(\xo, \xt) \leq b^{*}_{(p,p')}(\pi) \right] > \delta \tx{ where } \xo, \xt \iid (\pi)
\end{align*}
Since $b^*_{\delta}(\pi)$ is defined as the infimum of all $b$ satisfying the condition above, it follows that $b^*_{\delta}(\pi) \leq b^{*}{(p,p')}(\pi)$.
\end{proof}

Suppose the inputs are drawn from a multivariate normal distribution, i.e., $\xo, \xt \iid \mc{N}(z, \Sigma)$ for some covariance matrix $\Sigma \in \mb{R}^{D \times D}$. 
When  $\xo, \xt \iid \mc{N}(z, \Sigma)$, $(x-z)^{\tr}\Sigma^{-1}(x-z)$ follows Chi-squared distribution with $D$ degrees of freedom. Then, since $\xo, \xt$ are independently sampled, the region $A_{p'}(\pi)$ is given by a concentration ellipsoid as follows:
\begin{align*}
    A_{p'}(\pi) &:= \bigg\{ x \in \mb{R}^D: x^\tr \Sigma^{-1} x \leq F^{-1}_{\chi^2_D}(\sqrt{p'}) \bigg\} \tx{ where }  \label{equation:normal_concentration_ellipsoid}\\
    &F^{-1}_{\chi^2_D}(\cdot) \tx{ is the quantile function of the Chi-squared distribution with degree of freedom D}. 
\end{align*}


In many instances, the center of the normal distribution represents the critical point, and this is where the local analysis is conducted.
In particular, for skew-symmetric function modeling, we focus on a multivariate normal distribution centered at matched inputs, i.e., $z = (\zz, \zz)$, due to the homogeneity among the component spaces.
This allows us to construct the uniform upper bound $b_{p'}(\pi)$ based on the maximal error results introduced in Section \ref{section: maximal_error_over_ellipsoid}.

\begin{theorem}
    For any pair $p, p' \in (0,1)$ satisfying $\delta = p \cdot p'$, the uniform error bound $b^{*}_{\delta}$ is upper bounded as follows:
    \begin{align*}
        \begin{cases}
        & b^*_{\delta} \leq  Q(p) \cdot \bigg[ 2 \bigg( h\big( 0 \big) - h\big( 8R^2_D(p') \big) \bigg) \bigg] \quad \text{ if } 2R^2_D(p') \leq \min \{ R_{c^{1}}^2, R_{c^{2}}^2 \}\\
         \\
        & b^*_{\delta} \leq Q(p) \cdot \bigg[ 2 \bigg( h\big( 0 \big) - h\big( 8R^2_D(p') \big) \bigg) +4 \cdot \bigg( -h'\big( 0 \big) + 2h'\big( 2R^2_D(p') \big) \bigg) \cdot \bigg(4R^2_D(p') \bigg)  \\
        &\qquad\qquad\qquad\qquad \ldots   +4 \cdot \bigg(h''\big( 0 \big) - 2h''\big( 2R^2_D(p') \big) \bigg) \cdot \bigg(4R^4_T(p') \bigg) \bigg] \quad \text{ otherwise.}
    \end{cases}
    \end{align*}
\end{theorem}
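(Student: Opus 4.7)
The plan is to chain together the machinery developed in the preceding subsections: the sandwich inequality between the two uniform error notions, the decomposition of the pointwise bound into $Q(p)$ times a standard deviation, the reduction from an ellipsoidal region to a disk of radius $R_D(p')$ via orthogonal invariance and set inclusion, and finally the regime-dependent maximization of $\sigma_{\mc{E}}(r_1, r_2, \theta)$.

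First I would invoke Proposition \ref{prop:uniform_error_bound_relationship}, which gives $b^*_{\delta}(\pi) \leq b^*_{(p,p')}(\pi)$ whenever $\delta = p \cdot p'$. Next, unfolding the definition in \eqref{definition:uniform_bound_pp} and using Proposition \ref{proposition:decomposition_of_pointwise_error_bound}, we get
\begin{equation*}
b^*_{(p,p')}(\pi) \;=\; Q(p)\cdot \inf_{A} \bigg\{ \sup_{\xo, \xt \in A} \sigma_{\mc{E}}(\xo, \xt) : \mathbb{P}(\xo, \xt \in A) > p' \bigg\}.
\end{equation*}
For the Gaussian input distribution $\pi = \mc{N}(z, \Sigma)$, the minimizing admissible set is the highest-density concentration ellipsoid $A_{p'}(\pi) = \{x \in \mb{R}^D : x^\tr \Sigma^{-1} x \leq F^{-1}_{\chi^2_D}(\sqrt{p'})\}$, as noted after \eqref{definition:uniform_bound_pp}, so the infimum is simply $\sup_{\xo, \xt \in A_{p'}(\pi)} \sigma_{\mc{E}}(\xo, \xt)$.

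Second, I would apply the reduction from Proposition \ref{proposition: hierarchy_of_maximization}: orthogonal invariance of $\mc{E}$ (Theorem \ref{thm:error_orthogonal_invariance}) lets us rotate so that $\xo, \xt$ lie in the plane spanned by the two leading principal axes of the ellipsoid, and enlarging from $A^{(\lambda_1,\lambda_2)}$ to the circular set $A^{(\lambda_1,\lambda_1)}$ only increases the supremum by the monotonicity \eqref{observation:set_inclusion_monotonicity}. This yields
\begin{equation*}
\sup_{\xo, \xt \in A_{p'}(\pi)} \sigma_{\mc{E}}(\xo, \xt) \;\leq\; \sup_{0 \leq r_1, r_2 \leq R_D(p'),\, \theta \in [0,2\pi)} \sigma_{\mc{E}}(r_1, r_2, \theta),
\end{equation*}
where $R_D(p') = \sqrt{\lambda_1 \cdot F^{-1}_{\chi^2_D}(\sqrt{p'})}$ and the polar expression for $\sigma_{\mc{E}}$ comes from Theorem \ref{thm:polar_expression}.

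Finally, I would split on the sign regimes of Term 2a and Term 3a controlled by $R_{c^1}$ and $R_{c^2}$. In the local case $2R_D^2(p') \leq \min\{R_{c^1}^2, R_{c^2}^2\}$, the ellipsoid lies entirely in the local regime, so Proposition \ref{proposition: maximazation_local_regime} furnishes the first bound directly. Otherwise, the ellipsoid meets the transitional or far regime; by the set-inclusion monotonicity again, it suffices to bound the supremum by the far-regime bound from Proposition \ref{proposition: maximazation_far_regime} evaluated at $r_1 = r_2 = R_D(p')$, which yields the second inequality. Multiplying both cases by $Q(p)$ and chaining through $b^*_{\delta}(\pi) \leq b^*_{(p,p')}(\pi)$ gives the claim. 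The only subtle step is the last one: strictly, Propositions \ref{proposition: maximazation_local_regime} and \ref{proposition: maximazation_far_regime} describe the supremum restricted to their respective regimes, so one must argue that under Assumption \ref{assumption:restrction_on_h} the far-regime bound dominates the transitional-regime values as well, which follows because each of the three terms in $\sigma^2_{\mc{E}}$ is monotone in $r_1^2 + r_2^2$ within the relevant sign window, so replacing the radius by $R_D(p')$ in each term only increases the bound.
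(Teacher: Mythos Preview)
Your proposal is correct and follows essentially the same route as the paper's proof, which is simply the one-line instruction to combine Propositions \ref{proposition: maximazation_local_regime}, \ref{proposition: maximazation_far_regime}, \ref{prop:uniform_error_bound_relationship} with the definition \eqref{definition:uniform_bound_pp}; you have merely unpacked that instruction in detail, and your handling of the transitional regime via the far-regime bound matches the paper's remark preceding the regime analysis. One small imprecision: you assert that the infimum in \eqref{definition:uniform_bound_pp} is \emph{attained} at the concentration ellipsoid $A_{p'}(\pi)$, but for the upper bound you only need that $A_{p'}(\pi)$ is \emph{an} admissible region, which already gives $b^{*}_{(p,p')}(\pi) \leq Q(p)\sup_{\xo,\xt \in A_{p'}(\pi)} \sigma_{\mc{E}}(\xo,\xt)$.
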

\begin{proof}
    Combine Propositions \ref{proposition: maximazation_local_regime},  \ref{proposition: maximazation_far_regime},\ref{prop:uniform_error_bound_relationship} and the definition of $b_{(p,p')}$ defined in \ref{definition:uniform_bound_pp}.
\end{proof}

\subsection{Asymptotic Behavior of the Error Bound for Small and Large Regions}
To summarize the uniform upper bound's behavior for small and large regions, analyze its rate of decay as the radius of the region vanishes (local) and its rate of growth as the radius diverges (far).
Our characterization is based on the properties of the function $h: \mb{R}_{\geq0} \to \mb{R}$, provided in Notation \ref{notation:kernel_function_h}.
For the local regime analysis, we rely on the smoothness $h \in C^2(\mb{R}_{\geq0})$. For the far regime counterpart, the monotonicity $(-1)^{n}h^{(n)}(\cdot) \geq 0$ for $n=0,1,2$ is used.

\begin{notation} 
We denote the expression for the upper bound with respect to $R >0$, which stands for the range of the $r_1, r_2>0$ (distance between $z^*$ and each point $\xo, \xt \in \mb{R}^D$).
$\phi, \psi: \mb{R}_{\geq0} \to \mb{R}$ are used for the expression on the local and far regimes:
\begin{align*}
    &\textbf{Local: } \phi(R) = 2 \bigg( h\big( 0 \big) - h\big( 8R^2 \big) \bigg)\\    
    &\textbf{Far: } \hspace{0.08 in} \psi(R) =  2 \bigg( h\big( 0 \big) - h\big( 8R^2 \big) \bigg) +4 \cdot \bigg( -h'\big( 0 \big) + 2h'\big( 2R^2 \big) \bigg) \cdot \bigg(4R^2 \bigg) + 
    4 \cdot \bigg(h''\big( 0 \big) - 2h''\big( 2R^2 \big) \bigg) \cdot \bigg(4R^4 \bigg) 
\end{align*}
\end{notation}

\begin{proposition} [Rate of Decay of Uniform Error Bound] \label{proposition:rate_of_decay_bound}
The uniform upper bound decays quadratically, with constant determined by the first derivative of $h$ at zero. Specifically:
\begin{align*}
\phi(R) \to -16h'(0)R^2 + O(R^4) \tx{ as } R \to 0
\end{align*}
\end{proposition}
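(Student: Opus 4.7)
The plan is a direct Taylor expansion of $h$ at $0$, justified by the standing regularity hypothesis $h \in C^2(\mathbb{R}_{\geq 0})$. Since $\phi(R) = 2\bigl(h(0) - h(8R^2)\bigr)$ depends on $h$ only through its values near the origin, I would not need to invoke any of the global properties of $h$ (such as complete monotonicity) used in the construction of the bound itself.

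First, I would apply Taylor's theorem with Lagrange (or integral) remainder to $h$ at the point $0$, evaluated at the argument $8R^2$. For each sufficiently small $R > 0$ there exists $\xi_R \in [0, 8R^2]$ with
\begin{equation*}
h(8R^2) \;=\; h(0) \;+\; h'(0)\,(8R^2) \;+\; \tfrac{1}{2}\, h''(\xi_R)\, (8R^2)^2.
\end{equation*}
Substituting into the definition of $\phi$ gives
\begin{equation*}
\phi(R) \;=\; -16\, h'(0)\, R^2 \;-\; 64\, h''(\xi_R)\, R^4.
\end{equation*}

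Next, I would control the remainder. Because $h \in C^2(\mathbb{R}_{\geq 0})$, the function $h''$ is continuous at $0$, so it is bounded on some neighborhood $[0, \eta]$ of $0$: there exists $M < \infty$ such that $|h''(t)| \le M$ for all $t \in [0, \eta]$. For $R$ small enough that $8R^2 \le \eta$, the point $\xi_R$ lies in $[0, \eta]$, and therefore $|64\, h''(\xi_R)\, R^4| \le 64 M R^4$. This is precisely the statement $-64\, h''(\xi_R)\, R^4 = O(R^4)$ as $R \to 0$, yielding
\begin{equation*}
\phi(R) \;=\; -16\, h'(0)\, R^2 \;+\; O(R^4),
\end{equation*}
as claimed.

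There is essentially no obstacle here: this is a one-line Taylor expansion and the only subtlety is making sure the remainder is handled via the continuity of $h''$ at $0$ rather than any global bound. If the authors want the stronger conclusion that the $O(R^4)$ coefficient converges to $-32\, h''(0)$ (an honest second-order expansion), that also follows immediately from continuity of $h''$ at $0$ by letting $\xi_R \to 0$ as $R \to 0$; but the statement as written only requires the $O(R^4)$ bound.
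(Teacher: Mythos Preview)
Your proof is correct and follows the same approach as the paper: a second-order Taylor expansion of $h$ at $0$ applied to $\phi(R) = 2\bigl(h(0) - h(8R^2)\bigr)$. Your treatment is in fact more careful than the paper's, since you explicitly control the Lagrange remainder via continuity of $h''$ near $0$; the only slip is in your closing parenthetical, where the limiting $R^4$ coefficient should be $-64\,h''(0)$ rather than $-32\,h''(0)$.
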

\begin{proof}
    Second order Taylor expansion of the function $h$ at $z=0$ is given by: $h(0) + h'(0) t + (1/2)h''(0)t^2$.
    Consequently, $\phi(R) = 2\bigg(h(0) - h(8R^2) \bigg)\simeq 2\bigg( -8h'(0)R^2 - 32h''(0)R^4\bigg)$.
\end{proof}

\begin{proposition} [Limiting Behavior of Uniform Error Bound] \label{proposition:limiting_behavior_bound}
At infinity, the upper bound grows quartically, with the constant determined by the second derivative of $h$ at zero. 
Specifically:
\begin{align*}
\psi \to  2 h\big( 0 \big)  - 16 h'\big( 0 \big) R^2 + 16 h''\big( 0 \big)R^4  \tx{ as } R \to \infty
\end{align*}
\end{proposition}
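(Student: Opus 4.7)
The plan is to do a direct expansion of $\psi(R)$ and separate it into the claimed polynomial part plus a residual that vanishes at infinity. Concretely, distributing all the products gives
\[
\psi(R) \;=\; \big[\, 2h(0) - 16 h'(0)\, R^2 + 16 h''(0)\, R^4 \,\big] \;+\; E(R),
\]
where
\[
E(R) \;:=\; -2\, h(8R^2) \;+\; 32\, R^2 h'(2R^2) \;-\; 32\, R^4 h''(2R^2).
\]
Since the bracketed expression is exactly the claimed limiting polynomial, the proposition reduces to the single assertion $E(R) \to 0$ as $R \to \infty$.

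To handle $E(R)$ I would invoke the complete monotonicity assumption emphasized after Assumption~\ref{assumption:restrction_on_h}: the standard kernel families mentioned there (squared exponential, Mat\'ern, rational quadratic) all have completely monotone $h$, so by the Bernstein--Hausdorff--Widder theorem there is a finite positive Borel measure $\mu$ on $[0,\infty)$ with $\mu(\{0\})=0$ such that $h(t) = \int_0^\infty e^{-st}\,d\mu(s)$, and consequently $h'(t) = -\int s\, e^{-st}\,d\mu(s)$ and $h''(t) = \int s^2 e^{-st}\,d\mu(s)$. For the first residual term, $h(8R^2) = \int e^{-8sR^2}\,d\mu(s) \to 0$ by dominated convergence (integrand bounded by $1$, pointwise limit $0$ for $s>0$, and $\mu(\{0\})=0$). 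For the second, a change of variables $u = 2sR^2$ yields
\[
R^2 \lvert h'(2R^2)\rvert \;=\; \tfrac{1}{2}\int_0^\infty (2sR^2)\, e^{-2sR^2}\,d\mu(s),
\]
whose integrand is dominated uniformly by $\sup_{u\ge 0} u e^{-u}/2 = 1/(2e)$ and vanishes pointwise for $s>0$; dominated convergence finishes it. The third term is handled identically, with the integrand rewritten as $\tfrac{1}{4}(2sR^2)^2 e^{-2sR^2}$ and bounded uniformly by $\sup_{u\ge 0} u^2 e^{-u}/4 = e^{-2}$.

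The \textbf{main obstacle} is exactly this last step: a priori $R^4 h''(2R^2)$ is a rapidly growing prefactor times a decaying function, and one needs enough decay of $h''$ to kill the polynomial growth. Polynomial decay alone would not suffice. The Bernstein representation resolves this cleanly because the substitution $u = 2sR^2$ absorbs \emph{all} the powers of $R$ into a universal, $R$-independent profile $u^k e^{-u}$, after which the remaining $s$-integral is controlled by a finite constant against the finite measure $\mu$. If one wishes to avoid invoking complete monotonicity, an equivalent sufficient hypothesis is simply $t^k h^{(k)}(t) \to 0$ as $t \to \infty$ for $k = 0, 1, 2$, which can be checked kernel-by-kernel. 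Either way, combining the three limits gives $E(R) \to 0$ and hence $\psi(R) - \big[2h(0) - 16 h'(0)R^2 + 16 h''(0) R^4\big] \to 0$, proving the claim.
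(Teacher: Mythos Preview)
Your argument is correct and follows the same skeleton as the paper's proof: expand $\psi(R)$, separate off the polynomial $2h(0) - 16h'(0)R^2 + 16h''(0)R^4$, and show the residual vanishes as $R \to \infty$. The paper's own proof is terser than yours: it simply asserts that $h(8R^2), h'(2R^2), h''(2R^2) \to 0$ for the admissible class of $h$ and declares the conclusion immediate. You correctly recognize that this assertion by itself does not suffice, because the residual contains the \emph{weighted} terms $R^2 h'(2R^2)$ and $R^4 h''(2R^2)$, and the polynomial prefactors must be killed by the decay of $h'$ and $h''$. Your Bernstein-representation trick --- writing $R^{2k} h^{(k)}(2R^2)$ as an integral of the bounded profile $u^k e^{-u}$ against the finite Bernstein measure $\mu$ --- is a clean way to close that gap, and your fallback hypothesis $t^k h^{(k)}(t) \to 0$ for $k = 0,1,2$ is exactly the minimal condition needed. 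One small point: your assertion that $\mu(\{0\}) = 0$ is not automatic from complete monotonicity alone (e.g.\ $h \equiv 1$ is completely monotone); it is equivalent to $h(t) \to 0$ at infinity, which does hold for the squared-exponential, Mat\'ern, and rational-quadratic families the paper cites, so you may wish to state that as an explicit side assumption.
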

\begin{proof}
    For $h$ included in the function class described as above, $h(8R^2), h'(2R^2), h''(2R^2) \to 0 \tx{ as } R\to 0$.
    From this, the conclusion follows immediately.
\end{proof}

Theorem \ref{thm:Covariance_structure} provides clear interpretations for the $h(0)$, $h'(0)$, and $h''(0)$ terms:
\begin{align*}
    &h(0) = \text{Var} \bigg(u(z)\bigg) \\
    &h'(0) = -\frac{1}{4} \text{Var} \bigg( \dro_i u(z)\bigg) = -\frac{1}{4} \text{Var} \bigg( g(z^*,z^*)\bigg) \tx{ } \tx{ for some } i = 1, \dots, D \\
    &h''(0) = \frac{1}{8}\text{Var}(\droo_{i,j} u(z)) = \frac{1}{8}\text{Var}(H^{(1,1)}(z^*,z^*)) \tx{ }  \tx{ for some } i\neq j = 1, \dots, D
\end{align*}
These control, respectively, the variance in the process, the variance in the linear part of its approximation, and the variance in the quadratic part of its approximation.

In summary:
\begin{enumerate}
\item In the local regime, the upper bound decays quadratically, and its magnitude is determined by the variance of the slope of the base field near the point of expansion $z \in \mathbb{R}^D$. The error bound is larger when the slope of the base field is more variable.
\item In the far regime, grows quartically, and its magnitude is determined by the variance of the second derivatives of the base field near the point of expansion $z \in \mathbb{R}^D$. The error bound is larger when the second derivatives of the base field are more variable.
\end{enumerate}
Note that $h(0)$ appears in the limit $R \to \infty$, but vanishes in the limit $R \to 0$. This observation is consistent with the behavior of generic local approximations to random fields whose entries are approximately independent at large distances. Locally, the quadratic approximation can replace the original random function, with the residual vanishing quadratically. When $R = 0$, the approximation is exact, so the variance in the process does not appear in the error on small neighborhoods. In the far regime, $f(x,y)$ is essentially independent of $f(z+v,z + w)$ for any small $v,w$, so is independent of the derivatives used to generate the quadratic approximation $\tilde{f}$. Then, the variances add as if the process, and approximation, were sampled independently.

\subsubsection*{Example: Squared Exponential Kernel}
The function $h(\tau) = \exp(-\tau/l^2)$ defines the squared exponential covariance function $k_u(x,y) = \exp\big(-|x-y|_2^2/l^2 \big)$ (see \ref{notation:kernel_function_h}).
The length scale parameter $l > 0$ admits a straightforward interpretation. If $l$ is small, then the covariance function decays rapidly. If $l$ is large, then the covariance function deacys slowly, and sampled values remain correlated at long distances.

The derivatives of $h$ at the origin are $h(0) = 1$, $h'(0) = -1/l^2$, and $h''(0) = 1/l^4$.
Small $l$ produce large $h'(0)$ and $h''(0)$, which implies a high variance in local curvature.
So, shrinking the length scale $l$ increases the variability in the terms that parameterize the quadratic approximation, while reducing the distances needed to decorrelate sampled values of the process, producing an error bound that decays slowly as $R$ approaches zero, and grows quickly as $R$ diverges.

\subsection{Asymptotic Analysis with respect to the Dimension}
In high-dimensional settings, probabilistic results may yield surprising conclusions \cite{vershynin2009high}. Accordingly, it is important to directly study how systems behave in the limit of large dimension. Here, we will investigate how the accuracy of quadratic approximations to Gaussian fields does, or does not, depend on the ambient dimension. 

We've characterized the accuracy of the approximation with an error bound that holds with high probability over elliptical regions, and for Gaussian distributed inputs. Our construction involves two sources of randomness: one from randomly sampled from Gaussian field and the other from random inputs distributed according to the probability distribution $\pi$. The construction depends on three interrelated factors: the confidence level $\delta \in [0,1]$, the desired upper bound $b = b^*_{\delta}(\pi)$, and the admissible region $A = A_{p'}(\pi)$. Each choice of these parameters leads to the following question:
\begin{enumerate}
\item For fixed $\delta$ and $b$, how does the admissible region vary?
\item For fixed $\delta$ and $A$, how does the uniform upper bound vary?
\item For fixed $A$ and $b$, how does the confidence level vary?
\end{enumerate}
These are, in effect, equivalent questions. We illustrate the influence of the dimension $D$ by focusing on the first.
Contextually, this question can be phrased as follows: \textit{To keep a high-probability bound on the worst-case error constant as the dimension $D$ grows, how must the admissible region vary? }

If the admissible must shrink rapidly in dimension to maintain the same error control, then quadratic approximations will lose fidelity on neighborhoods of any fixed size, for sufficiently high-dimensional problems. Then, to employ a quadratic approximation in high dimension, an analyst would either need a tighter error bound, a different notion of error, or stronger regularity conditions on the process. In contrast, if the admissible region's size is independent of the dimension, then we can shift our focus to guaranteeing that inputs stay within the region in high dimension. 

Our construction of the uniform bound was based on Proposition \ref{proposition: hierarchy_of_maximization}. Proposition \ref{proposition: hierarchy_of_maximization}, allowed, without loss of generality, reduction from $D$ dimensions to a standard two-dimensional optimization problem. As a result, the pointwise standard deviation in the error process can be bounded above by a constant that is independent of the dimension $D$ for any compact region $A$. When $A$ is contained in an ellipsoid, the bound depends only on the lengths of the first and second principal axes of the ellipsoid. It follows that, if we are only concerned with worst-case error bounds on fixed neighborhoods, then we do not need to vary the neighborhood's radius to account for changes in the dimension $D$. In particular, we can use the same radius, even as $D$ diverges. 

This optimistic result is countered by the observation that, the volume of a $D$ dimensional ball vanishes in the limit as $D$ diverges. So, while the worst-case error is bounded above by a bound that holds with high probability, for a region with fixed radius, the neighborhood may need to vary in $D$ if we sample inputs at random about the point of expansion, and hope to keep the same fraction of inputs inside the admissible region.

When $\pi$ is Gaussian, the admissible region may be represented in two-dimensional Euclidean space by the circular region $R^2_D(p') = \lambda_1 \cdot \chi^2_D(\sqrt{p'})$. This implies that the geometric constraint imposed by a fixed confidence level and a fixed upper bound restricts only the magnitude of the principal axis; that is, the largest eigenvalue of $\Sigma$.
The uniform upper bound remains constant at a fixed confidence level only if we consider a “smaller” admissible region, namely one for which the largest eigenvalue of the covariance matrix satisfies
\begin{align*}
    \lambda_1 \propto \big( \chi^2_D(\sqrt{p'}) \big)^{-1}
\end{align*}
To characterize this trend, we introduce a simple closed-form approximation to the quantile function of the chi-squared distribution, originally proposed by R.A Fisher in \cite{fisher1970statistical}. Among various higher-precision approximations, this one yields an especially simple expression while remaining accurate for degrees of freedom greater than 30, thereby providing a convenient tool for understanding the overall behavior. Specifically, Fisher’s approximation for the $p$-quantile of the $\chi^2_D$ distribution is
\begin{equation} 
\chi^2_D(p) \approx \frac{1}{2} \left( \Phi^{-1}(p) + \sqrt{2D - 1} \right)^2.
\end{equation}
Using Fisher’s approximation, we conclude that: to maintain the same confidence level for randomly selected inputs and the same uniform upper bound on the error as the dimension grows, the admissible ellipsoid must shrink, with the magnitude of its principal axis decreasing at the rate $O(D^{-1})$. This result is natural since, the expected squared length of a Gaussian random vector is proportional to the dimension $D$.



\section{Conclusion} \label{sec: conclusion}

This manuscript provides a basic distributional characterization of local quadratic approximations to skew-symmetric Gaussian fields and their errors. We've established that the first and second order derivatives of a block-isotropic Gaussian are independent, and that the Hessian breaks into GOE and GSOE blocks. We have also shown that the worst-case error in the quadratic approximation on an elliptical region can be bounded from above, with high probability, by a bound that is quadratic in the radius of the region for small radii, then quartic for large radii. The rate of growth in the bound depends on the length scale of the kernel function, and, as a consequence, the smoothness of the process. Naturally, rougher processes produce larger errors for the same radii than smoother processes. Importantly, the maximal approximation error over a compact region is bounded from above with high probability by a bound whose value is independent of the underlying dimension. When the inputs are drawn at random from a multivariate normal, the error bound depends on the dimension only through the sampling of inputs to the approximation. 

We propose this work as a reference for future studies based on local approximations to random, skew-symmetric functions.

\section*{Conflict of Interest}
The authors declare that they have no conflicts of interest.

\printbibliography 

\newpage
\appendix
\section{Derivation of Entrywise Covariance Structures} \label{appendix:Covariance_derivations}
Throughout the derivations, we use the following notations, which stand for the squared norms of coordinate differences:
\begin{align*}
  p(x,y) &= \| x - y \|_2^2 = \sum_{m=1}^D (\xo_m - \yo_m)^2 + (\xt_m - \yt_m)^2 \\
  q(x,y) &= \| x - \yt, y - \xo \|_2^2 = \sum_{m=1}^D (\xo_m - \yt_m)^2 + (\xt_m - \yo_m)^2
\end{align*}

Since we consistently reserve $ x$ and $y$ for their own role in the context, we omit the arguments and write $p$ and $q$ when there is no ambiguity.\\
To begin with, we aggregate some facts that will be repeatedly used in the derivations.
The derivatives of coordinate differences are given by:
\begin{align*}
  &\partial_{\xo_i} p = 2(\xo_i - \yo_i) \quad \partial_{\xt_i} p = 2(\xt_i - \yt_i) \quad \quad &&\partial_{\xo_i} q = 2(\xo_i - \yt_i) \quad \partial_{\xt_i} q = 2(\xt_i - \yo_i) \\
  &\partial^2_{\xo_i \xo_j} p = 2 \delta_{ij} \quad \partial^2_{\xt_i \xt_j} p = 2 \delta_{ij}  \quad \quad &&\partial^2_{\xo_i \xo_j} q = 2 \delta_{ij} \quad \partial^2_{\xt_i \xt_j} q = 2 \delta_{ij}\\
  &\tx{While mixed partial derivatives vanish: }\\
  &\partial^2_{\xo_i \xt_j} p = 0  \quad &&\partial^2_{\xo_i \xt_j} q = 0\\
  &\tx{When we take second derivatives with respect to } y: \\
  &\partial_{\yo_k}\bigg(\partial_{\xo_i} \bigg) p =  -2 \delta_{ik} \quad \partial_{\yt_k}\bigg(\partial_{\xt_i} \bigg) p = -2 \delta_{ik}  \quad \quad &&\partial_{\yt_k}\bigg(\partial_{\xo_i} \bigg) q = -2 \delta_{ik} \quad \partial_{\yo_k}\bigg(\partial_{\xt_i} \bigg) q = -2 \delta_{ik}\\
  &\tx{While mixed partial derivatives vanish again: }\\
  &\partial_{\yt_k}\bigg(\partial_{\xo_i} \bigg) p = 0  \quad &&\partial_{\yo_k}\bigg(\partial_{\xt_i} \bigg) q = 0\\
\end{align*}
The first derivatives evaluated at $x = z$ but $y$ arbitrary does not necessarily vanish:
\begin{align*}
  &\partial_{\xo_i} p \big|_{x=z} = 2(\zz_i - \yo_i), \quad \partial_{\xt_i} p \big|_{x=z} = 2(\zz_i - \yt_i) \quad \quad \partial_{\xo_i} q \big|_{x=z} = 2(\zz_i - \yt_i), \quad \partial_{\xt_i} q \big|_{x=z} = 2(\zz_i - \yo_i)
\end{align*}
In comparison, the same quantities always vanish as long as $x = y$.
\begin{align*}
  &\partial_{\xo_i} p \big|_{x=y} = 0, \quad \partial_{\xt_i} p \big|_{x=y} = 0, \quad \quad  \partial_{\xo_i} q \big|_{x=y} = 0, \quad \partial_{\xt_i} q \big|_{x=y} = 0
\end{align*}

The covariances we need to calculate can be distinguished by the differentials taken at $x$. To be specific, there are two cases:

\begin{align*}
  \begin{cases}
    \tx{Case 1: $x = z = (\zz, \zz)$ but $y = (\yo, \yt)$ is arbitrary } \\
    \tx{Case 2: $x = y = z = (\zz, \zz)$}
  \end{cases}
\end{align*}

\subsection{Case 1: $x = z = (\zz, \zz)$ but $y = (\yo, \yt)$ is arbitrary}
Note that 
\begin{align*}
  p(z,y) &= \sum_{m=1}^D (\zz_m - \yo_m)^2 + (\zz_m - \yt_m)^2 = ||z - y||_2^2 \\
  q(z,y) &=  \sum_{m=1}^D (\zz_m - \yt_m)^2 + (\zz_m - \yo_m)^2 = ||z - y||_2^2 \\
\end{align*}

The following covariances fall into this case, and, under the smoothness assumptions on $h$, they equal the right-hand sides shown below.

\begin{align*}
&\Cov\bigg( \dro_i f(\zz, \zz), f(\yo,\yt)\bigg) = \der_{\xo_i} k_f \bigg( (\xo, \zz), (\zz, \zz) \bigg) \bigg|_{\xo=\zz}\\
&\Cov\bigg( \dro_{i,j} f(\zz, \zz), f(\yo,\yt)\bigg) =  \der^2_{\xo_i \xo_j} k_f \bigg( (\xo, \zz), (\zz, \zz) \bigg) \bigg|_{\xo=\zz}\\
&\Cov\bigg( \drt_{i,j} f(\zz, \zz), f(\yo,\yt)\bigg) = \der^2_{\xo_i \xt_j} k_f \bigg( (\xo, \xt), (\zz, \zz) \bigg) \bigg|_{\xo=\zz, \xt=\zz}\\
\end{align*}
Recall that 
\begin{align*}
  k_f(x,y) = 2 \bigg[ h(p) - h(q) \bigg]
\end{align*}
Then it follows that
\begin{align*}
\der_{\xo_i} k_f(x,y) &= 2 \bigg[ \der_{\xo_i} p \cdot h'(p) - \der_{\xo_i} q \cdot h'(q) \bigg] \\
\der^2_{\xo_i \xo_j} k_f(x,y) &= 2 \bigg[ \der^2_{\xo_i \xo_j} p \cdot h'(p) + \der_{\xo_i} p \cdot \der_{\xo_j} p \cdot h''(p) - \der^2_{\xo_i \xo_j} q \cdot h'(q) - \der_{\xo_i} q \cdot \der_{\xo_j} q \cdot h''(q) \bigg] \\
\der^2_{\xo_i \xt_j} k_f(x,y) &= 2 \bigg[ \der^2_{\xo_i \xt_j} p \cdot h'(p) + \der_{\xo_i} p \cdot \der_{\xt_j} p \cdot h''(p) - \der^2_{\xo_i \xt_j} q \cdot h'(q) - \der_{\xo_i} q \cdot \der_{\xt_j} q \cdot h''(q) \bigg] \\
\text{Evaluated at } x = z, &\tx{ we have } \\
\der_{\xo_i} k_f(z,y) &= 2 \bigg[ 2(\zz_i - \yo_i) \cdot h'(p) - 2(\zz_i - \yt_i) \cdot h'(q) \bigg] \\
  &= 4 h'\bigg(||z - y||_2^2\bigg) \bigg[(\zz_i - \yo_i) - (\zz_i - \yt_i))\bigg] \\
\der^2_{\xo_i \xo_j} k_f(z,y) &= 2 \bigg[ 2 \delta_{ij} \cdot h'(p) + 4(\zz_i - \yo_i)(\zz_j - \yo_j) \cdot h''(p) - 2 \delta_{ij} \cdot h'(q) - 4(\zz_i - \yt_i)(\zz_j - \yt_j) \cdot h''(q) \bigg] \\
&= 8 h''\bigg(||z - y||_2^2\bigg) \bigg[ (\zz_i - \yo_i)(\zz_j - \yo_j) - (\zz_i - \yt_i)(\zz_j - \yt_j) \bigg] \\
\der^2_{\xo_i \xt_j} k_f(z,y) &= 2 \bigg[ 0 \cdot h'(p) + 4(\zz_i - \yo_i)(\zz_j - \yt_j) \cdot h''(p) - 0 \cdot h'(q) - 4(\zz_i - \yt_i)(\zz_j - \yo_j) \cdot h''(q) \bigg] \\
&= 8 h''\bigg(||z - y||_2^2\bigg) \bigg[ (\zz_i - \yo_i)(\zz_j - \yt_j) - (\zz_i - \yt_i)(\zz_j - \yo_j) \bigg] \\
\end{align*}

\subsection{Case 2: $x = y = z = (\zz, \zz)$}
Note that
\begin{align*}
  p(z,z) &= \sum_{m=1}^D (\zz_m - \zz_m)^2 + (\zz_m - \zz_m)^2 = 0 \\
  q(z,z) &=  \sum_{m=1}^D (\zz_m - \zz_m)^2 + (\zz_m - \zz_m)^2 = 0 \\
\end{align*}
Under this, there should be
\paragraph{Covariances between each pair of $g, h_{xx}, h_{xy}$ evaluated at $z$.}

\begin{align*}
\Cov\bigg(\hoo_{i,j}, g_k \bigg) &:= \Cov\bigg( \droo_{i,j} f(\zz, \zz), \dro_k f(\zz, \zz) \bigg) \\
 &= \der^3_{\xo_i \xo_j \yo_k} k_f \bigg( (\xo, \zz), (\yo, \zz) \bigg) \bigg|_{\substack{\xo=\zz \\ \yo=\zz}} = \der^3_{\xo_i \yo_k  \xo_j} k_f \bigg( (\xo, \zz), (\yo, \zz) \bigg) \bigg|_{\substack{\xo=\zz \\ \yo=\zz}} \\
\Cov\bigg(\hot_{i,j}, g_k \bigg) &:= \Cov\bigg( \drot_{i,j} f(\zz, \zz), \dro_k f(\zz, \zz) \bigg) \\
 &= \der^3_{\xt_j \yo_k \xo_i  } k_f \bigg( (\xo, \xt), (\yo, \yt) \bigg) \bigg|_{\substack{\xo=\zz  \xt=\zz \\ \yo=\zz  \yt=\zz}} \\
\Cov\bigg(\hot_{i,j}, \hoo_{k,l} \bigg) &:= \Cov\bigg( \drot_{i,j} f(\zz, \zz), \droo_{k,l} f(\zz, \zz) \bigg) \\
 &= \der^4_{\yo_l \xo_i \xt_j \yo_k  } k_f \bigg( (\xo, \xt), (\yo, \zz) \bigg) \bigg|_{\substack{\xo=\zz  \xt=\zz \\ \yo=\zz}} 
\end{align*}

Recall that
\begin{align*}
  \der_{\xo_i} k_f(x,y) &= 2 \bigg[ \der_{\xo_i} p \cdot h'(p) - \der_{\xo_i} q \cdot h'(q) \bigg] 
\end{align*}
Then it follows that
\begin{align*}
\der_{\yo_k \xo_i} k_f(x,y) &= 2 \bigg[ \der_{\yo_k}\bigg(\der_{\xo_i} p \bigg) \cdot h'(p) + \der_{\xo_i} p \cdot \der_{\yo_k} p \cdot h''(p) - \der_{\yo_k}\bigg(\der_{\xo_i} q \bigg) \cdot h'(q) - \der_{\xo_i} q \cdot \der_{\yo_k} q \cdot h''(q) \bigg] \\
\tx{Evaluated at } y = z, &\tx{ every coordinate difference vanishes }. \tx{ Thus, } \\
\der_{\yo_k \xo_i} k_f(x,z) &= 2 \bigg[ -2 \delta_{ik} \cdot h'(p) - (-2 \delta_{ik}) \cdot h'(q) \bigg] = 0
\end{align*}
This implies that 
\begin{align*}
  \der_{\xt_j} \bigg( \der^2_{\yo_k \xo_i  } k_f \bigg( (\xo, \xt), (\yo, \yt) \bigg) \bigg|_{\substack{\xo=\zz \\ \yo=\zz  \yt=\zz}}\bigg) \bigg|_{\xt = \zz}  = \Cov\bigg(\hoo_{i,j}, g_k \bigg) = 0
\end{align*}
Similarly, from $\der_{\xt_j} k_f(x,y) = 2 \bigg[ \der_{\xt_j} p \cdot h'(p) - \der_{\xt_j} q \cdot h'(q) \bigg]$, we have
\begin{align*}
\der_{\yo_k \xt_j} k_f(x,y) &= 2 \bigg[ \der_{\yo_k}\bigg(\der_{\xt_j} p \bigg) \cdot h'(p) + \der_{\xt_j} p \cdot \der_{\yo_k} p \cdot h''(p) - \der_{\yo_k}\bigg(\der_{\xt_j} q \bigg) \cdot h'(q) - \der_{\xt_j} q \cdot \der_{\yo_k} q \cdot h''(q) \bigg] \\
&= 2 \bigg[ \der_{\xt_j} p \cdot \der_{\yo_k} p \cdot h''(p) - \der_{\xt_j} q \cdot \der_{\yo_k} q \cdot h''(q) \bigg] \\
&= 2 h''(0) \cdot 4 \cdot \bigg[ (\xt_j - \yt_j)\cdot (-(\xo_k - \yo_k )) - (\xt_j - \yo_j) \cdot (-(\xt_k - \yo_k))  \bigg] \\
\end{align*}
Evaluating at $x = y = z$  but $\yo_l, \xo_i$, all the coordinate differences vanish.
This implies that
\begin{align*}
  \Cov\bigg(\hot_{i,j}, g_k \bigg) = 0
\end{align*}

\paragraph{Covariances between $g, h_{xx}, h_{xy}$ themselves, respectively, evaluated at $z$ }
Similarly, we can show that
\begin{align*}
  &\Cov\bigg( g_i, g_k \bigg) = -4h'(0)\cdot  \delta_{ik} \\
  &\Cov\bigg( [H_{xx}]_{i,j}, [H_{xx}]_{k,l} \bigg) = 8 h''(0) \bigg( \delta_{ik} \delta_{jl} + \delta_{il} \delta_{jk} \bigg) \\
  &\Cov\bigg( [H_{xy}]_{i,j},[H_{xy}]_{k,l}  \bigg) = 8 h''(0) \bigg( \delta_{ik} \delta_{jl} - \delta_{il} \delta_{jk}\bigg)
\end{align*}

\section{Details on the proposition \ref{prop:Covariance_function_Taylor_approximation} (Covariance Functions of $\tilde{f}$) } \label{appendix:Covariances_interacting_terms}
\subsection{Derivation of Covariance between Interacting Terms}
\begin{align*}
    &\Cov\big( g^\tr \big( \vo - \vt \big), g^\tr \big( \too - \ttt \big) \big) \\
    &= \Cov\bigg( \sum_{i=1}^{D} g_i (\vo_i - \vt_i), \sum_{j=1}^{D} g_j (\too_j - \ttt_j) \bigg) \\
    &= \sum_{i=1}^{D} \sum_{j=1}^{D} (\vo_i - \vt_i)(\too_j - \ttt_j) \Cov(g_i, g_j) \\
    &=  -4h'(0) \cdot \sum_{i=1}^{D} (\vo_i - \vt_i)(\too_i - \ttt_i)  \tx{ from } \ref{thm:Covariance_structure} \\
    &= -4h'(0) \bigg( \big \langle \vo, \too \big \rangle_2 + \big \langle \vt, \ttt \big \rangle_2 - \big \langle \vo, \ttt \big \rangle_2 - \big \langle \vt, \too \big \rangle_2 \bigg) \tx{ by } \ref{decoupling_techniques}
    \bigskip \\
    &\Cov\bigg( \frac{1}{2} \big((\vo)^\tr \hoo \vo -  (\vt)^\tr \hoo \vt \big), \frac{1}{2} \big((\too)^\tr \hoo \too -  (\ttt)^\tr \hoo \ttt \big) \bigg) \\
    &= \frac{1}{4} \Cov\bigg( \sum_{i,j=1}^{D} \hoo_{i,j} \big( \vo_i \vo_j - \vt_i \vt_j \big), \sum_{k,l=1}^{D} \hoo_{k,l} \big( \too_k \too_l - \ttt_k \ttt_l \big) \bigg) \\
    &= \frac{1}{4} \sum_{i,j=1}^{D} \sum_{k,l=1}^{D} \big( \vo_i \vo_j - \vt_i \vt_j \big) \big( \too_k \too_l - \ttt_k \ttt_l \big) \Cov(\hoo_{i,j}, \hoo_{k,l}) \\
    &= \frac{1}{4} \sum_{i,j=1}^{D} \sum_{k,l=1}^{D} \big( \vo_i \vo_j - \vt_i \vt_j \big) \big( \too_k \too_l - \ttt_k \ttt_l \big) \cdot 8h''(0) \cdot (\dl_{ik}\dl_{jl}+\dl_{il}\dl_{jk}) \tx{ from } Theorem \ref{thm:Covariance_structure} \\
    &= 2h''(0) \cdot \sum_{i,j=1}^{D} \big( \vo_i \vo_j - \vt_i \vt_j \big) \big( \too_i \too_j - \ttt_i \ttt_j \big) \\
    &\quad \ldots + 2h''(0) \cdot \sum_{i,j=1}^{D} \big( \vo_i \vo_j - \vt_i \vt_j \big) \big( \too_j \too_i - \ttt_j \ttt_i \big) \\
    &= 4h''(0) \cdot \sum_{i,j=1}^{D} \big( \vo_i \vo_j - \vt_i \vt_j \big) \big( \too_i \too_j - \ttt_i \ttt_j \big) \\
    &= 4h''(0) \cdot ||(\vo)^\tr \too||_2^2 + ||(\vt)^\tr \ttt||_2^2 - ||(\vo)^\tr \ttt||_2^2 - ||(\vt)^\tr \too||_2^2 \tx{ by } appendix \ref{decoupling_techniques} \\
\end{align*}

\begin{align*}
    &\Cov\big( (\vo)^\tr \hot \vt, (\too)^\tr \hot \ttt \big) \\
    &= \Cov\bigg(\sum_{i,j=1}^{D} \hot_{i,j} \vo_i \vt_j, \sum_{k,l=1}^{D} \hot_{k,l} \too_k \ttt_l \bigg) \\
    &= \sum_{i,j=1}^{D} \sum_{k,l=1}^{D} \vo_i \vt_j \too_k \ttt_l \Cov(\hot_{i,j}, \hot_{k,l}) \\
    &= \sum_{i,j=1}^{D} \sum_{k,l=1}^{D} \vo_i \vt_j \too_k \ttt_l \cdot 8h''(0) \cdot (\dl_{ik}\dl_{jl}-\dl_{il}\dl_{jk}) \tx{ similarly from } \ref{thm:Covariance_structure} \\
    &= 8h''(0) \cdot \sum_{i,j=1}^{D} \vo_i \vt_j \too_i \ttt_j - 8h''(0) \cdot \sum_{i,j=1}^{D} \vo_i \vt_j \too_j \ttt_i \\
    &= 8h''(0) \cdot \sum_{i,j=1}^{D} \vo_i \vt_j \bigg( \too_i \ttt_j - \too_j \ttt_i \bigg) \\
    &= 4h''(0) \cdot \sum_{i,j=1}^{D} \bigg[\vo_i \vt_j \bigg( \too_i \ttt_j - \too_j \ttt_i \bigg) \bigg] \\
    &\quad \ldots +  4h''(0) \cdot \sum_{i,j=1}^{D} \bigg[\vo_j \vt_i \bigg( \too_j \ttt_i - \too_i \ttt_j \bigg) \bigg] \tx{ by renaming indices }\\
    &= 4h''(0) \cdot \sum_{i,j=1}^{D} \bigg(\vo_i \vt_j - \vo_j \vt_i \bigg) \bigg( \too_i \ttt_j - \too_j \ttt_i \bigg) \\
    &= 4h''(0) \big \langle \vo (\vt)^\tr - \vt (\vo)^\tr, \too (\ttt)^\tr - \ttt (\too)^\tr \big \rangle_F \tx{ by } \ref{decoupling_techniques} \\
    &= 4h''(0) \cdot  \bigg( 2 \big \langle \vo, \too \big \rangle_2 \cdot \big \langle \vt, \ttt \big \rangle_2 - 2 \big \langle \vo, \ttt \big \rangle_2 \cdot \big \langle \vt, \too \big \rangle_2 \bigg) 
\end{align*}

\begin{align*}
    &\tx{Denoting }A = \big \langle \vo, \too \big \rangle_2, \quad B = \big \langle \vo, \ttt \big \rangle_2, \quad C = \big \langle \vt, \too \big \rangle_2, \quad D = \big \langle \vt, \ttt \big \rangle_2,\\
    &k_f \big( (\xo, \xt), (\yo, \yt) \big) = -4h'(0) \cdot (A + D - B - C) + 4h''(0) \cdot (A^2 + D^2 - B^2 - C^2 + 2AD - 2BC) \\
    &= -4h'(0) \cdot \big((A + D) - (B + C) \big) + 4h''(0) \cdot \big((A + D)^2 - (B + C)^2 \big) \\
    &= -4h'(0) \cdot \bigg(\big \langle \Delta x, \Delta y \big \rangle_2 - \big \langle \Delta x, (\Delta y)^{flip} \big \rangle_2  \bigg) + 4h''(0) \cdot \bigg(\big \langle \Delta x, \Delta y \big \rangle_2^2 - \big \langle x, (\Delta y)^{flip} \big \rangle_2^2 \bigg) \\
\end{align*}
 
\subsection{Techniques for Decoupling Matrix Inner Products} \label{decoupling_techniques}
Each interacting cross-term corresponds to a polynomial expression that captures coupling among different locations. They can be better understood through the decoupled expressions.
\begin{align*}
    &\sum_{i=1}^{D} \big(\vo_i - \vt_i \big) \big( \too_i - \ttt_i \big) = \big \langle \vo - \vt, \too - \ttt \big \rangle_2 \\
    &= \big \langle \vo, \too \big \rangle_2 + \big \langle \vt, \ttt \big \rangle_2 - \big \langle \vo, \ttt \big \rangle_2 - \big \langle \vt, \too \big \rangle_2 \\
    \bigskip \\
     &\sum_{i,j=1}^{D} \bigg[\big( \vo_i \vo_j - \vt_i \vt_j \big) \big( \too_i \too_j - \ttt_i \ttt_j \big) \bigg] \\
     &= \big \langle \vo (\vo)^\tr - \vt (\vt)^\tr, \too (\too)^\tr - \ttt (\ttt)^\tr \big \rangle_F \\
     &= \big \langle \vo (\vo)^\tr, \too (\too)^\tr \big \rangle_F + \big \langle \vt (\vt)^\tr, \ttt (\ttt)^\tr \big \rangle_F \\
     &\quad \ldots - \big \langle \vo (\vo)^\tr, \ttt (\ttt)^\tr \big \rangle_F - \big \langle \vt (\vt)^\tr, \too (\too)^\tr \big \rangle_F \\
     &= ||(\vo)^\tr \too||_2^2 + ||(\vt)^\tr \ttt||_2^2 - ||(\vo)^\tr \ttt||_2^2 - ||(\vt)^\tr \too||_2^2 \tx{  from the cyclic property of trace } \\
     \bigskip \\
    &\sum_{i,j=1}^{D} \bigg[ \big(\vo_i \vt_j - \vo_j \vt_i \big) \big( \too_i \yt_j - \too_j \ttt_i \big) \bigg] \\ 
    &= \big \langle \vo (\vt)^\tr - \vt (\vo)^\tr, \too (\ttt)^\tr - \ttt (\too)^\tr \big \rangle_F \\
    &= \big \langle \vo (\vt)^\tr, \too (\ttt)^\tr \big \rangle_F + \big \langle \vt (\vo)^\tr, \ttt (\too)^\tr \big \rangle_F - \\
    &\quad \ldots \big \langle \vo (\vt)^\tr, \ttt (\too)^\tr \big \rangle_F - \big \langle \vt (\vo)^\tr, \too (\ttt)^\tr \big \rangle_F \\
    &= 2 \big \langle \vo, \too \big \rangle_2 \cdot \big \langle \vt, \ttt \big \rangle_2 - 2 \big \langle \vo, \ttt \big \rangle_2 \cdot \big \langle \vt, \too \big \rangle_2 \tx{ similarly from the cyclic property of trace } \\
\end{align*}

In the above derivations, we used the trace property in the following way:
\begin{align}
   \big \langle \vo (\vo)^\tr, \too (\too)^\tr \big \rangle_F &= \tx{tr}\big( (\vo (\vo)^\tr)^\tr (\too (\too)^\tr) \big) \nonumber\\
   &= \tx{tr}\big( \vo (\vo)^\tr \too (\too)^\tr \big) \nonumber\\
   &= \tx{tr}\big( (\vo)^\tr \too (\too)^\tr \vo \big) = ||(\vo)^\tr \too||_2^2 \label{first_decoupling}
   \bigskip \\
   \big \langle \vo (\vt)^\tr, \too (\ttt)^\tr \big \rangle_F &= \tx{tr}\big( (\vo (\vt)^\tr)^\tr (\too (\ttt)^\tr) \big) \nonumber\\
    &= \tx{tr}\big( \vt (\vo)^\tr \too (\ttt)^\tr \big) \nonumber \\
    &=  (\vo)^{\tr} \too \cdot tr\big( (\ttt)^\tr \vt \big)\nonumber \\
    &= \big \langle \vo, \too \big \rangle_2 \cdot \big \langle \vt, \ttt \big \rangle_2  \label{second_decoupling}
\end{align}

\newpage
\section{Error Covariance} \label{appendix:Covariance_of_error_process}
\begin{align*}
    &\Cov\big( f(\xo, \xt), g^\tr \big( \too - \ttt \big) \big) \\
    &= \Cov\bigg( f(\xo, \xt), \sum_{i=1}^D g_i (\too_i - \ttt_i) \bigg) \\
    &= \sum_{i=1}^D (\too_i - \ttt_i) \Cov\big( f(\xo, \xt), g_i \big) \\
    &= \sum_{i=1}^D (\too_i - \ttt_i) \Cov\big( f(\xo, \xt), \der^{(1)}_i f(\zz, \zz) \big) \\
    &= \sum_{i=1}^D (\too_i - \ttt_i) \der^{(1)}_i k_f \big( (\xo, \xt), (\zz, \zz) \big) \\
    &= \sum_{i=1}^D (\too_i - \ttt_i) \cdot 4h'\bigg( \|x-z\|_2^2 \bigg) \cdot \big( (\zz_i - \xo_i)  -(\zz_i - \xt_i) \big) \\
    &=- 4h'\big( \|\Delta x\|_2^2 \big) \cdot \sum_{i=1}^D (\too_i - \ttt_i) \cdot \big( \vo_i - \vt_i  \big) \\
    \bigskip \\
    &\Cov\bigg( f(\xo, \xt),  \frac{1}{2} \bigg((\too)^\tr \hoo \too -  (\ttt)^\tr \hoo \ttt \bigg) \bigg) \\
    &= \frac{1}{2} \Cov\bigg( f(\xo, \xt), \sum_{i,j=1}^D \hoo_{i,j} \big( \too_i \too_j - \ttt_i \ttt_j \big)) \\
    &= \frac{1}{2} \sum_{i,j=1}^D \big( \too_i \too_j - \ttt_i \ttt_j \big) \Cov\big( f(\xo, \xt), \hoo_{i,j} \big) \\
    &= \frac{1}{2} \sum_{i,j=1}^D \big( \too_i \too_j - \ttt_i \ttt_j \big) \Cov\big( f(\xo, \xt), \der^{(1,1)}_{(i,j)} f(\zz, \zz) \big) \\
    &= \frac{1}{2} \sum_{i,j=1}^D \big( \too_i \too_j - \ttt_i \ttt_j \big) \der^{(1,1)}_{(i,j)} k_f \big( (\xo, \xt), (\zz, \zz) \big) \\
    &= \frac{1}{2} \sum_{i,j=1}^D \big( \too_i \too_j - \ttt_i \ttt_j \big) \der^{(1,1)}_{(i,j)} 8h''\bigg( \|x-z\|_2^2 \bigg) \cdot \bigg[ (\zz_i - \xo_i)(\zz_j - \xo_j) - (\zz_i - \xt_i)(\zz_i - \xt_j)   \bigg] \\
    &= 4h''\big( \|\Delta x \|_2^2 \big) \cdot \sum_{i,j=1}^D \big( \too_i \too_j - \ttt_i \ttt_j \big) \cdot \big( \vo_i \vt_j - \vt_i \vo_j  \big) 
    \end{align*}
\begin{align*}
    &\Cov\big( f(\xo, \xt), (\too)^\tr \hot \ttt \big) \\
    &= \Cov\bigg( f(\xo, \xt), \sum_{i,j=1}^D \hot_{i,j} \too_i \ttt_j \bigg) \\
    &= \sum_{i,j=1}^D \too_i \ttt_j \Cov\big( f(\xo, \xt), \hot_{i,j} \big) \\
    &= \sum_{i,j=1}^D \too_i \ttt_j \Cov\big( f(\xo, \xt), \der^{(1,2)}_{(i,j)} f(\zz, \zz) \big) \\
    &= \sum_{i,j=1}^D \too_i \ttt_j \der^{(1,2)}_{(i,j)} k_f \big( (\xo, \xt), (\zz, \zz) \big) \\
    &= \sum_{i,j=1}^D \too_i \ttt_j \der^{(1,2)}_{(i,j)} 8h''\bigg( \|x-z\|_2^2 \bigg) \cdot \bigg[ (\zz_i - \xo_i)(\zz_j - \xt_j) - (\zz_i - \xt_i)(\zz_i - \xo_j)   \bigg] \\
    &= 8h''\big( \|\Delta x\|_2^2 \big) \cdot \sum_{i,j=1}^D \too_i \ttt_j \cdot \bigg( \vo_i \vt_j - \vt_i \vo_j  \bigg) \\
    &= 4h''\big( \| \Delta x \|_2^2 \big) \cdot \sum_{i,j=1}^D \big( \too_i \ttt_j - \ttt_i \too_j \big) \cdot \big( \vo_i \vt_j - \vt_i \vo_j  \big) \tx{ by expanding and renaming indices}.
\end{align*}

\begin{align*}
    &\tx{Collecting terms, we have: } \\
    &\Cov\big( f(x), \tilde{f}(y) \big) \\
    &= - 4h'\big( \|\Delta x\|_2^2 \big) \cdot \sum_{i=1}^D (\too_i - \ttt_i) \cdot \big( \vo_i - \vt_i  \big) \\
    &+  4h''\big( \|\Delta x\|_2^2 \big) \cdot \sum_{i,j=1}^D \bigg[ \big( \too_i \too_j - \ttt_i \ttt_j \big) \cdot \big( \vo_i \vt_j - \vt_i \vo_j  \big) \\
    &\quad \ldots + \big( \too_i \too_j - \ttt_i \too_j \big) \cdot \big( \vo_i \vt_j - \vt_i \vo_j  \big) \bigg] \\
    &= -4h'\big( \|\Delta x\|_2^2 \big) \cdot \bigg(\big \langle \Delta x, \Delta y \big \rangle_2 - \big \langle \Delta x, (\Delta y)^{flip} \big \rangle_2  \bigg) + 4h''\bigg( \|\Delta x\|_2^2 \bigg) \cdot \bigg(\big \langle \Delta x, \Delta y \big \rangle_2^2 - \big \langle \Delta x, (\Delta y)^{flip} \big \rangle_2^2 \bigg) \tx{ by } \tx{ appendix }\ref{decoupling_techniques} \\
\end{align*}

\newpage
\section{Distributionally Orthogonal Invariance of Isotropic Gaussian Random Vectors and Matrices Consists of Independent Normal Entries} \label{appendix:orthogonal_invariance}
\begin{lemma} (Orthogonal Invariance of mean-zero Isotropic Gaussian Random Vectors) \label{lem:orthogonal_invariance_isotropic_gaussian_vector}
    Let $v \in \mb{R}^D$ be a Gaussian random vector with mean zero and covariance matrix $\sigma^2 I_D$ for some $\sigma^2 >0$. Then, for any orthogonal transformation $Q: \mb{R}^D \to \mb{R}^D$,
    \begin{align*}
        Q v \overset{d}{=} v
    \end{align*}    
\end{lemma}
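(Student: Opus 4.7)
The plan is to exploit the fact that a multivariate Gaussian distribution is completely characterized by its first two moments. Since $Qv$ is a linear transformation of a Gaussian vector, it is itself Gaussian, so it suffices to verify that $Qv$ shares its mean and covariance with $v$.

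First I would note that $Qv$ is Gaussian: for any $a \in \mb{R}^D$, the scalar $a^\tr (Qv) = (Q^\tr a)^\tr v$ is a linear combination of the entries of $v$, hence Gaussian by definition of a Gaussian random vector. Next I would compute the mean, $\E[Qv] = Q \E[v] = 0$, and the covariance,
\begin{align*}
    \Cov(Qv) = Q \Cov(v) Q^\tr = Q (\sigma^2 I_D) Q^\tr = \sigma^2 Q Q^\tr = \sigma^2 I_D,
\end{align*}
where the last equality uses orthogonality of $Q$. Thus $Qv$ and $v$ agree in distribution, both being $\mathcal{N}(0, \sigma^2 I_D)$.

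There is essentially no obstacle here: the result is a one-line consequence of the transformation rule for the covariance matrix together with the fact that Gaussian laws are determined by their mean and covariance. The only point worth flagging is that the same argument extends verbatim to the matrix case $QAQ^\tr \overset{d}{=} A$ when $A$ has i.i.d.\ $\mathcal{N}(0,1)$ entries, after vectorizing: one uses $\Cov(\text{vec}(A)) = I_{D^2}$ and the identity $\Cov(\text{vec}(QAQ^\tr)) = (Q \kron Q) I_{D^2} (Q \kron Q)^\tr = I_{D^2}$ by orthogonality of the Kronecker product of orthogonal matrices. This gives the companion invariance used to justify orthogonal invariance of the GOE and GSOE ensembles cited in the main text.
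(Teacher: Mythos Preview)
Your proof is correct and follows essentially the same line as the paper's: both argue that $Qv$ is Gaussian with mean zero and then compute $\Cov(Qv) = Q(\sigma^2 I_D)Q^\tr = \sigma^2 I_D$ to conclude equality in law. Your aside on the matrix case via vectorization and the Kronecker product is a valid alternative to the paper's density/trace argument for Lemma~\ref{lem:orthogonal_invariance_matrix_normal}, but for the present lemma the two proofs coincide.
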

\begin{proof}
    $Qv$ also satisfies the definition of a multivariate Gaussian distribution, and is trivially mean-zero. Since they are Gaussian, it suffices to show that they have the same covariance matrices, as follows.
    \begin{align*}
        \Cov (Qv, Qv) &= \E \bigg[ (Qv) (Qv)^\tr  \bigg] \\
        &= \E \bigg[ Qv v^\tr Q^\tr \bigg] \\
        &= Q\E \bigg[ v v^\tr  \bigg]Q^\tr \\
        &= (\sigma^2I_D)QQ^\tr  = \sigma^2I_D
    \end{align*}
\end{proof}

\begin{lemma} \label{lem:orthogonal_invariance_matrix_normal}
    Let $A \in \mb{R}^{D \times D}$ be a random matrix whose entries are independent normal random variables with mean zero and variance $\sigma^2 >0$. Then, for any orthogonal transformation $Q: \mb{R}^D \to \mb{R}^D$.
    Then its distribution is invariant under conjugation by $Q$, i.e.,
    \begin{align*}
        Q^\tr A Q \overset{d}{=} A
    \end{align*}
\end{lemma}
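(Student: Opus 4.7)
The plan is to reduce the statement to Lemma \ref{lem:orthogonal_invariance_isotropic_gaussian_vector} via vectorization. Since the entries of $A$ are i.i.d.\ $\mathcal{N}(0,\sigma^2)$, the vector $\mathrm{vec}(A) \in \mb{R}^{D^2}$ is mean-zero Gaussian with covariance $\sigma^2 I_{D^2}$, i.e.\ isotropic in the sense of that lemma. The goal will then be to express the two-sided conjugation $A \mapsto Q^\tr A Q$ as multiplication of $\mathrm{vec}(A)$ by some orthogonal matrix in $\mb{R}^{D^2 \times D^2}$, after which the previous lemma delivers the result.

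The key algebraic step uses the standard vectorization identity $\mathrm{vec}(BXC) = (C^\tr \kron B)\,\mathrm{vec}(X)$, which gives
\[
\mathrm{vec}(Q^\tr A Q) = (Q^\tr \kron Q^\tr)\,\mathrm{vec}(A).
\]
The matrix $Q^\tr \kron Q^\tr$ is itself orthogonal: from the mixed-product property $(A \kron B)(C \kron D) = (AC) \kron (BD)$ we get $(Q^\tr \kron Q^\tr)^\tr (Q^\tr \kron Q^\tr) = (Q \kron Q)(Q^\tr \kron Q^\tr) = (QQ^\tr) \kron (QQ^\tr) = I_{D^2}$. Applying Lemma \ref{lem:orthogonal_invariance_isotropic_gaussian_vector} to the isotropic Gaussian vector $\mathrm{vec}(A)$ under this orthogonal transformation yields $\mathrm{vec}(Q^\tr A Q) \overset{d}{=} \mathrm{vec}(A)$. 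The Borel isomorphism between $L^2(\Omega,\mb{R}^{D \times D})$ and $L^2(\Omega, \mb{R}^{D^2})$ (noted in the Remark preceding Section \ref{sec: block isotropic}) then lifts the distributional equality from the vectorized form back to the matrices, giving $Q^\tr A Q \overset{d}{=} A$.

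There is no substantive obstacle; the argument is purely algebraic once one commits to vectorization. The only point demanding care is keeping Kronecker conventions consistent --- in particular, tracking that two-sided conjugation corresponds to $Q^\tr \kron Q^\tr$ (rather than $Q \kron Q^\tr$ or $Q \kron Q$) and that this product inherits orthogonality from $Q$. A direct entrywise alternative --- computing $\mathrm{Cov}((Q^\tr A Q)_{ij}, (Q^\tr A Q)_{kl})$ and collapsing the resulting fourfold sum using $\sum_a Q_{ai}Q_{ak} = \delta_{ik}$ to recover $\sigma^2 \delta_{ik}\delta_{jl}$ --- would reach the same conclusion, but the vectorization route both makes the link to the previous lemma transparent and keeps the index bookkeeping minimal.
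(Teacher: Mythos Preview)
Your proof is correct but takes a genuinely different route from the paper. The paper argues directly via the density: it writes the joint density of the entries as $f(A) \propto \exp\bigl(-\tfrac{1}{2\sigma^2}\,\tx{tr}(A^\tr A)\bigr)$ and then observes that $\tx{tr}\bigl((Q^\tr A Q)^\tr (Q^\tr A Q)\bigr) = \tx{tr}(Q^\tr A^\tr A Q) = \tx{tr}(A^\tr A)$ by orthogonality and trace cyclicity, so the density is invariant under the map $A \mapsto Q^\tr A Q$. Your argument instead vectorizes, identifies the conjugation with left multiplication by $Q^\tr \kron Q^\tr$, checks that this Kronecker product is orthogonal, and invokes Lemma~\ref{lem:orthogonal_invariance_isotropic_gaussian_vector} on $\mathrm{vec}(A)$. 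Your route has the virtue of making the reduction to the previous lemma explicit and of automatically handling the Jacobian issue (the paper's density argument tacitly uses that the linear map $A \mapsto Q^\tr A Q$ has unit Jacobian, which is exactly the orthogonality of $Q^\tr \kron Q^\tr$ that you verify). The paper's route is more self-contained and avoids Kronecker machinery. Both are short and sound.
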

\begin{proof}
    We show that they have the same Probability Density Function.
    Being a random matrix with independent normal entries, the probability density function of $A$ is given as
    \begin{align*}
        f(A) \propto \exp\bigg(-\frac{1}{2\sigma^2} \sum_{i,j = 1}^D A_{i,j}^2 \bigg) = \exp\bigg(-\frac{1}{2\sigma^2} \tx{tr}(A^\tr A) \bigg) 
    \end{align*}
    For the conjugated matrix $Q^\tr A Q$, we have
    \begin{align*}
        f(Q^\tr A Q) &\propto \exp\bigg(-\frac{1}{2\sigma^2} \tx{tr}((Q^\tr A Q)^\tr (Q^\tr A Q)) \bigg) \\
        &= \exp\bigg(-\frac{1}{2\sigma^2} \tx{tr}(Q^\tr A^\tr Q Q^\tr A Q) \bigg) \\
        &= \exp\bigg(-\frac{1}{2\sigma^2} \tx{tr}(A^\tr A) \bigg) 
    \end{align*}
\end{proof}

\end{document}